\definecolor{linkred}{rgb}{0.48,0.1,0.05}
\definecolor{linkblue}{RGB}{16, 78, 139}
	\titlespacing{\section}{0pt}{12pt}{0pt}
	\titlespacing{\subsection}{0pt}{6pt}{0pt}
\long\def\@footnotetext#1{%
\H@@footnotetext{%
\ifHy@nesting 
\hyper@@anchor{\@currentHref}{#1}%
\else 
\Hy@raisedlink{\hyper@@anchor{\@currentHref}{\relax}}#1%
\fi 
}}
\def\@footnotemark{%
\leavevmode 
\ifhmode\edef\@x@sf{\the\spacefactor}\nobreak\fi 
\H@refstepcounter{Hfootnote}%
\hyper@makecurrent{Hfootnote}%
\hyper@linkstart{link}{\@currentHref}%
\@makefnmark 
\hyper@linkend 
\ifhmode\spacefactor\@x@sf\fi 
\relax 
}%
\renewcommand*\@footnotemark{%
\leavevmode 
\ifhmode 
\edef\@x@sf{\the\spacefactor}%
\FN@mf@check 
\nobreak 
\fi 
\H@refstepcounter{Hfootnote}%
\hyper@makecurrent{Hfootnote}%
\hyper@linkstart{link}{\@currentHref}%
\@makefnmark 
\hyper@linkend 
\ifFN@pp@towrite 
\FN@pp@writetemp 
\FN@pp@towritefalse 
\fi 
\FN@mf@prepare 
\ifhmode\spacefactor\@x@sf\fi 
\relax%
}%
\theoremstyle{plain}
\newtheorem{theorem}{Theorem}[section]
\newtheorem{proposition}[theorem]{Proposition}
\newtheorem{lemma}[theorem]{Lemma}
\newtheorem{corollary}[theorem]{Corollary}
\theoremstyle{definition}
\newtheorem{definition}[theorem]{Definition}
\newtheorem{remark}[theorem]{Remark}
\newcommand{\F}{{\mathcal F}}
\newcommand{\card}{{\rm card}}
\DeclarePairedDelimiter\floor{\lfloor}{\rfloor}
\newcommand{\diam}{\ensuremath{\mathrm{diam}}}
\newcommand{\cone}{\ensuremath{\mathrm{cone}}}
\newcommand{\MOD}{\ensuremath{\mathrm{Mod}}}
\newcommand{\Stab}{\ensuremath{\mathrm{Stab}}}
\newcommand{\A}{\mathcal A}
\newcommand{\MF}{\mathcal M \mathcal F}
\long\def\symbolfootnote[#1]#2{\begingroup%
\def\thefootnote{\fnsymbol{footnote}}\footnote[#1]{#2}\endgroup}
\def\blfootnote{\xdef\@thefnmark{}\@footnotetext}
\begin{document}

{\Large \bfseries \sc The geometry of flip graphs and mapping class groups}

{\bfseries Valentina Disarlo\symbolfootnote[2]{\normalsize Research partially funded by an International Scholarship from the University of Fribourg.\vspace{0.2cm}}, 
Hugo Parlier\symbolfootnote[1]{\normalsize Research supported by Swiss National Science Foundation grants numbers PP00P2\textunderscore 15302 and PP00P2\textunderscore 128557.\\
{\em 2010 Mathematics Subject Classification:} Primary: 05C25, 30F60, 32G15, 57M50. Secondary: 05C12, 05C60, 30F10, 57M07, 57M60. \\
{\em Key words and phrases:} flip graphs, triangulations of surfaces, combinatorial moduli spaces, mapping class groups
}}

{\em Abstract.} The space of topological decompositions into triangulations of a surface has a natural graph structure where two triangulations share an edge if they are related by a so-called flip. This space is a sort of combinatorial Teichm\"uller space and is quasi-isometric to the underlying mapping class group. We study this space in two main directions. We first show that strata corresponding to triangulations containing a same multiarc are strongly convex within the whole space and use this result to deduce properties about the mapping class group. We then focus on the quotient of this space by the mapping class group to obtain a type of combinatorial moduli space. In particular, we are able to identity how the diameters of the resulting spaces grow in terms of the complexity of the underlying surfaces. 

\vspace{1cm}

\section{Introduction}\label{sec:intro}

The many relationships between curves, arcs and homeomorphisms of surfaces have provided numerous, rich and fruitful insights into the study of Teichm\"uller spaces and mapping class groups. In particular, combinatorial structures such as curve, arc and pants complexes have been shown to be closely related to metric structures on Teichm\"uller spaces and in particular all share the mapping class group as an automorphism group. 

Flip graphs are an example of one of these natural combinatorial structures. For a given topological surface with a prescribed set of points, the vertex set of the associated flip graph is the set of maximal multiarcs (which have begin and terminate in the prescribed points). Just like the other combinatorial objects, the multiarcs are considered up to isotopy (which preserve the prescribed set of points). As they are maximal, they decompose the surface into triangles and thus we refer to them as triangulations (although they may not be triangulations in the usual sense). Two triangulations share an edge in the flip graph if they are related by a flip - so if they differ by at most one arc. Provided the surface is complicated enough, flip graphs are infinite objects but are always locally finite connected graphs. 

Flip graphs can be thought of (and this is the point of view we take in this article) as metric objects by associating length 1 to every edge. As metric spaces they describe how different triangulations are from one another and are a sort of combinatorial analogue of Teichm\"uller space. With a few exceptions, the mapping class group is again the full automorphism group \cite{Kork-Pap} and as such the finite quotient, which we call a {\it modular flip graph}, becomes a combinatorial analogue of a moduli space. In contrast to some of the other spaces mentioned before, the action of the mapping class group is proper and, via the \v{S}varc-Milnor lemma, the flip graph is a quasi-isometric model of the mapping class group which makes it an ideal tool for studying its geometry. Mosher \cite{Mosher2} implicitly uses the flip graph to study the mapping class group from the combinatorial point of view. This point of view has recently been exploited by Rafi and Tao \cite{RT2}. 

Flip graphs of surfaces also appear in a number of other contexts. As hinted at above, flip graphs naturally appear in Teichm\"uller theory. They appear for instance in Penner's decorated Teichm\"uller space \cite{Pen3} and in the work of Fomin, Shapiro and D. Thurston (\cite{FST1} and \cite{FST2}) in their study of cluster algebras related to bordered surfaces. Flip graphs and some slight variations have been studied in combinatorics and computational geometry by a variety of authors, for instance Negami \cite{Negami1}, Bose \cite{Bose1} and De Loera-Rambau-Santos \cite{triang-book}. 

One of the simplest and most studied flip graphs is the flip graph of a polygon, the so-called associahedron \cite{Stasheff, Tamari}. It is a finite graph with a number of remarkable properties including being the graph of a polytope. The celebrated result of Sleator, Tarjan and W. Thurston \cite{STT2} about the diameter of the associahedron, and proved using 3-dimensional hyperbolic polyhedra, was recently extended by Pournin \cite{Pournin} who also provided a purely combinatorial proof. The diameter of this graph is exactly $2n-10$ for all $n>12$. Sleator, Tarjan, and W. Thurston \cite{STT1} also studied triangulations of spheres up to homeomorphism, which essentially amounts to studying the diameter of a modular flip graph. In this case, they show that the diameter grows like $n \log n$ where $n$ is the number of labelled points on the sphere. 

In this article, we study both the geometry of flip graphs and of modular flip graphs. One of the main motivations we have in mind is the study of the mapping class group. 

We begin by studying the geometry of flip graphs. Our first main result comes from a very natural question about two triangulations that have an arc $a$ in common. Given any two such triangulations, there is at least one minimal path between them: do all the triangulations of any minimal path contain the arc $a$? The answer is yes. 
\begin{theorem}\label{thm:convex}
For every multiarc $A$, the stratum $\F_A$ is strongly convex. 
\end{theorem}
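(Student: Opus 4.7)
The plan is to reduce to a single arc and argue by contradiction, showing any geodesic excursion out of $\F_a$ can be strictly shortened. Since a triangulation contains the multiarc $A$ if and only if it contains each $a \in A$, we have $\F_A = \bigcap_{a \in A} \F_{\{a\}}$. Strong convexity is preserved under intersections: any geodesic between two points of $\bigcap_{a} \F_{\{a\}}$ lies in each $\F_{\{a\}}$ and hence in the intersection. So it suffices to prove the theorem when $A = \{a\}$ is a single arc.

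\emph{Setup.} Suppose $T, T' \in \F_{\{a\}}$ and $T = T_0, \dots, T_n = T'$ is a geodesic with some $T_i \notin \F_{\{a\}}$. Pick a maximal excursion: indices $k \le \ell$ with $T_{k-1}, T_\ell \in \F_{\{a\}}$ and $a \notin T_j$ for $k \le j \le \ell - 1$. The first flip $T_{k-1} \to T_k$ removes $a$, replacing it by the other diagonal $a'$ of the local quadrilateral $Q_a(T_{k-1})$ spanned by the two triangles of $T_{k-1}$ adjacent to $a$. The key structural observation is that $a$ crosses exactly one arc of $T_k$, namely $a'$: outside $Q_a$ the triangulations $T_{k-1}$ and $T_k$ agree, and $a$ meets none of those arcs since it already did not in $T_{k-1}$. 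Symmetrically, in $T_{\ell-1}$ the arc $a$ crosses a unique arc, which must be the one the final flip replaces by $a$.

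\emph{Induction.} It suffices to exhibit a path from $T_{k-1}$ to $T_\ell$ of length at most $(\ell - k + 1) - 2$, since substituting it into the geodesic contradicts the assumption that $d(T,T') = n$. I would argue by induction on the excursion length $m = \ell - k + 1$. The base case $m = 2$ follows from the structural observation: the return flip in $T_k$ must be the flip of $a'$, yielding $T_\ell = T_{k-1}$ and distance zero. For the inductive step, examine the second flip $f \colon T_k \to T_{k+1}$. We have $f \neq a'$, else $T_{k+1} = T_{k-1} \in \F_{\{a\}}$, contradicting the minimality of $\ell$. When the quadrilateral $Q_f(T_k)$ is disjoint from $Q_a(T_{k-1})$ (equivalently, $f$ is not one of the four boundary arcs of $Q_a$), the flips of $f$ and $a$ occupy disjoint regions and commute: swapping them produces a new first step $T_{k-1} \to T_{k-1}' \in \F_{\{a\}}$ followed by a shorter excursion, to which the inductive hypothesis applies.

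\emph{Main obstacle.} The crux is the boundary case where $f$ is a side of $Q_a(T_{k-1})$: the flips then share a triangle and do not commute, since flipping $a$ and flipping $f$ in either order yield different new arcs. The plan is to invoke an associahedron-type pentagon relation inside the union of $Q_a$ with the triangle of $T_k$ glued to $Q_a$ along $f$. Although no alternative length-$2$ route exists locally in the pentagon, the relation lets us re-express a short initial segment of the excursion by a sequence whose first move stays in $\F_{\{a\}}$, after which the commuting case applies to conclude by induction. The principal delicacy is verifying that this local rearrangement remains compatible with the tail of the excursion and strictly shortens it globally; tracking the number of arcs of each $T_j$ crossed by $a$ (a quantity that starts and ends at one across the excursion and changes by at most one at each step) provides the bookkeeping needed to close the induction uniformly across the commuting and pentagon cases.
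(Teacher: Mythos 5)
Your reduction to a single arc via the observation that $\F_A = \bigcap_{a\in A}\F_a$ and that strong convexity is preserved under intersection is correct and matches the paper. Your base case (a length-$2$ excursion forces an immediate return) and the commuting case (when the second flipped arc $f$ is neither $a'$ nor a side of the quadrilateral $Q_a$, swap the order of the two flips, stay in $\F_a$ one step longer, shorten the excursion, and apply induction) are also sound.

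The gap is precisely where you flag it: the non-commuting case when $f$ is a side of $Q_a$. The pentagon relation does not help here. In that local pentagon the flip graph is a $5$-cycle, with $\F_a$ occupying two adjacent vertices; the rewrite replaces the two-flip segment $T_{k-1}\to T_k\to T_{k+1}$ by a \emph{three}-flip segment $T_{k-1}\to E\to D\to T_{k+1}$ with $E\in\F_a$ and $D\notin\F_a$. The total path has length $m+1$ rather than $m$, and the new excursion still has $\ell-k$ vertices outside $\F_a$ and $\ell-k+1$ edges — exactly the same as before. So the inductive quantity does not decrease, the total length goes \emph{up}, and the argument stalls. The auxiliary quantity you propose, $i(a,T_j)$, also fails to decrease across the rewrite: the replaced vertex $T_k$ and the new vertex $D$ both intersect $a$ exactly once, so the sum $\sum_j i(a,T_j)$ is unchanged. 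There is no evident monotone quantity that closes the induction in this case, and local relations (pentagon, square) are length-preserving by nature, so they cannot on their own produce the strict shortening a contradiction requires.

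The paper takes a global rather than local route and avoids this entirely. It defines a combing map $\mathrm{push}_{a^+}$ which pushes every arc of a triangulation across $a$ in the chosen direction, shows this does not increase pairwise intersection numbers, and from this produces a $1$-Lipschitz retraction $\pi_{a^+}\colon\F(\Sigma)\to\F_a$ fixing $\F_a$ pointwise. Projecting a geodesic $\gamma$ between two points of $\F_a$ gives a path in $\F_a$ of the same endpoints and at most the same length, establishing that $\F_a\hookrightarrow\F(\Sigma)$ is isometric. Strong convexity then follows from the observation that if some flip along $\gamma$ removes $a$ (say $a\in T_i$, $a\notin T_{i+1}$), then $\pi_{a^+}(T_i)=\pi_{a^+}(T_{i+1})$, so the projected path is \emph{strictly} shorter — contradiction. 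This single projection step replaces all of your case analysis at once and, in particular, handles the pentagon case for free. If you want to salvage the excursion-surgery approach you would need a genuinely new idea for the boundary case; as written it cannot be completed.
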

In the above result, for any given flip graph, we've denoted $\F_A$ the set of triangulations which contained a prescribed multiarc $A$. We note that this result for flip graphs of polygons was previously known and an essential tool in \cite{STT1} and in \cite{Pournin}. 

We observe that the same question can be asked for the pants graph (where multicurves play the part of multiarcs). For the pants graph, this is known to be true for certain types of multicurves but is in general completely open \cite{APS1, APS2, ALPS, TaylorZupan}. 

We give two applications of this result. It is a recent result of the second author together with Aramayona and Koberda that, under certain conditions, simplicial embeddings between flip graphs only arise {\it naturally} \cite{AKP}. By naturally, we mean that the injective simplical map comes from an embedding between the two surfaces. The conditions are on surface in the domain flip graph that is required to be non-exceptional (or ``sufficiently complicated", see Section \ref{ss:applications} for a precise definition). Now together with the above theorem, this implies the following. 
\begin{corollary}
Suppose $\Sigma$ is non-exceptional, and let $\F(\Sigma) \to \F(\Sigma')$ be an injective simplicial map. Then $\F(\Sigma)$ is strongly convex inside of $ \F(\Sigma')$.
\end{corollary}

As geometric properties of flip graphs translate into a quasi properties for mapping class groups, we also obtain the following result for mapping class groups. This result also follows by results of Masur-Minsky \cite{MM2} and Hamenst\"adt \cite{Ham2}.

\begin{corollary}\label{cor:convex1}
For every vertex $T \in \F_A$, there is a commutative diagram: 
$$\xymatrix{
\F_A \ar@{^{(}->}[r] &\F(\Sigma) \\
\mathrm{Stab}(A) \ar[u]^{{\omega_T}_|} \ar@{^{(}->}[r] & \MOD(\Sigma) \ar[u]_{\omega_T} }
$$
where the inclusion $\F_A \hookrightarrow \F(\Sigma)$ is an isometry and the orbit map $\omega_T: \MOD(\Sigma) \to \F(\Sigma)$ restricts to a quasi-isometry ${\omega_T}_|: \Stab(A) \to \F_A $. 
Moreover, the inclusion $\mathrm{Stab}(A) \hookrightarrow \MOD(\Sigma)$ is a quasi-isometric embedding. 
\end{corollary}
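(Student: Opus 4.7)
The plan is to use Theorem \ref{thm:convex} to get the isometric inclusion for free, apply the \v{S}varc--Milnor lemma twice (once on $\F(\Sigma)$, once on $\F_A$), and then read off the quasi-isometric embedding of groups from the commutativity of the diagram.

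For the isometry $\F_A \hookrightarrow \F(\Sigma)$, the point is that $\F_A$ is a full subgraph and, by Theorem \ref{thm:convex}, is strongly convex: every geodesic of $\F(\Sigma)$ between two vertices of $\F_A$ stays entirely in $\F_A$. Hence the intrinsic graph distance on $\F_A$ coincides with the restriction of the distance on $\F(\Sigma)$, and the inclusion is an isometric embedding. For the orbit map $\omega_T:\MOD(\Sigma)\to\F(\Sigma)$, the flip graph is a connected, locally finite geodesic space on which $\MOD(\Sigma)$ acts properly and with finite quotient (the modular flip graph), so the \v{S}varc--Milnor lemma gives that $\omega_T$ is a quasi-isometry.

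For the restricted orbit map, I would first observe that $\Stab(A)\cdot T\subseteq\F_A$ (elements of $\Stab(A)$ preserve the isotopy class of $A$, hence preserve the stratum), so $\omega_T$ restricts to a map ${\omega_T}_|:\Stab(A)\to\F_A$. The key identification is that $\F_A$ is canonically isomorphic to the flip graph of the cut surface $\Sigma\!\setminus\!\!\setminus\!A$, since triangulations of $\Sigma$ containing $A$ correspond bijectively to triangulations of the cut surface, and flips within $\F_A$ correspond exactly to flips in the cut surface. Up to a finite index issue (permuting sides of arcs in $A$ and permuting homeomorphic components), $\Stab(A)$ is commensurable with the mapping class group of the cut surface, so it acts properly and with finite quotient on $\F_A$. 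A second application of \v{S}varc--Milnor (using strong convexity to know that $\F_A$ is itself a geodesic space) then gives that ${\omega_T}_|$ is a quasi-isometry.

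Finally, the quasi-isometric embedding $\Stab(A)\hookrightarrow\MOD(\Sigma)$ follows by chasing the commutative diagram: for $f,g\in\Stab(A)$, the distance $d_{\MOD(\Sigma)}(f,g)$ is comparable, via $\omega_T$, to $d_{\F(\Sigma)}(\omega_T(f),\omega_T(g))$, which equals $d_{\F_A}(\omega_T(f),\omega_T(g))$ by strong convexity, which in turn is comparable via ${\omega_T}_|$ to $d_{\Stab(A)}(f,g)$. I expect the main (and only non-routine) obstacle to be cocompactness of the $\Stab(A)$-action on $\F_A$: one must carefully identify $\F_A$ with the flip graph of the cut surface and argue that the extra combinatorial data (side-labellings of $A$, component permutations) contributes only a finite-index or finite-kernel discrepancy between $\Stab(A)$ and $\MOD(\Sigma\!\setminus\!\!\setminus\!A)$, which is sufficient for \v{S}varc--Milnor.
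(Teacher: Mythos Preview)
Your proposal is correct and follows essentially the same route as the paper's proof: use strong convexity for the isometric inclusion, identify $\F_A$ with the flip graph of the cut surface, observe that $\Stab(A)$ and $\MOD(\Sigma\setminus A)$ differ only by finite index (the paper's Lemma~\ref{stabilizers}), apply \v{S}varc--Milnor to the $\Stab(A)$-action on $\F_A$, and compose. You have also correctly isolated the one nontrivial point, namely cocompactness of the $\Stab(A)$-action, and your suggested resolution via finite-index commensurability is exactly what the paper does.
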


After these results about the geometry of flip graphs and mapping class groups, we shift our focus to the quotient of the former by the latter, namely the geometry of modular flip graphs $\MF(\Sigma)$. In particular, we study their diameter and how it grows in function of the topology of the base surface. Our main results are upper and lower bounds that have the same growth rates in terms of the number of punctures and genus. We summarize them in the following theorem.

\begin{theorem}\label{thm:diameters}
There exist constants $L>0$ and $U>0$ such that if $\Sigma$ be a surface of genus $g$ with $n$ labelled punctures then
$$
L \left( g \log(g+1) + n \log(n+1)\right)\leq \diam\left(\MF(\Sigma)\right) \leq U \left( g \log(g+1) + n \log(n+1)\right).
$$
\end{theorem}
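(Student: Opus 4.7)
The plan is to prove the upper and lower bounds separately by induction on the complexity $\chi = 2g - 2 + n$ of $\Sigma$. The target growth rate $\chi \log \chi$ is asymptotically equivalent to $g \log(g+1) + n \log(n+1)$, so it is enough to establish matching upper and lower bounds of order $\chi \log \chi$ and then unpack the $g$ and $n$ contributions at the end.

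For the upper bound, fix a reference triangulation $T_0$ of $\Sigma$ and let $T$ be arbitrary. The approach is divide-and-conquer. Choose a \emph{balanced} arc $a$ in $T_0$, meaning one whose complement is a disjoint union of one or two subsurfaces each of complexity at most roughly $\chi/2$. Since $\MOD(\Sigma)$ acts transitively on arcs of a fixed topological type, and since a standard argument shows that any prescribed arc can be introduced into any triangulation at cost $O(\chi)$ flips, we can connect $T$ by at most $O(\chi)$ flips to some $T'$ for which there exists $\phi \in \MOD(\Sigma)$ with $\phi^{-1}(T') \in \F_a$. Now $T_0$ and $\phi^{-1}(T')$ both belong to $\F_a$, and Theorem~\ref{thm:convex} says $\F_a$ is strongly convex in $\F(\Sigma)$, so geodesics between them remain in $\F_a$. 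The stratum $\F_a$ naturally splits as a product $\F(\Sigma_1) \times \F(\Sigma_2)$ of the flip graphs of the complementary subsurfaces (with a single factor in the non-separating case), and the $L^1$-distance there is additive. Descending to modular quotients and optimising over $\Stab(a)$ yields the recursion
$$
D(\chi) \leq 2\, D(\chi/2) + O(\chi),
$$
where $D(\chi) = \diam(\MF(\Sigma))$. The master theorem then gives $D(\chi) = O(\chi \log \chi)$, and a careful bookkeeping, arranging at each stage for the balanced arc to separate a balanced proportion of genus and punctures, produces the refined estimate $U(g\log(g+1) + n\log(n+1))$.

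For the lower bound, the strategy is to combine a volumetric counting argument with an explicit construction of a potential function distinguishing extremal triangulations. A ribbon-graph enumeration shows that $|\MF(\Sigma)|$ grows at least as $e^{c\chi \log \chi}$, and since every vertex of $\F(\Sigma)$ has at most $3(2g-2+n) = O(\chi)$ neighbours, a counting argument already gives a lower bound of order $\chi$. To upgrade this to $\chi \log \chi$, one constructs two explicit classes $T_{\rm good}, T_{\rm bad} \in \MF(\Sigma)$---for instance, a balanced triangulation (whose dual trivalent graph has bounded maximum valence) and an unbalanced ``caterpillar'' triangulation---together with a combinatorial invariant $\Phi$ built from the multiset of vertex valences (or from nested-depth counts of a maximal multiarc subsystem) which changes by a controlled amount under each flip but satisfies $\Phi(T_{\rm bad}) - \Phi(T_{\rm good}) \geq c\,\chi \log \chi$. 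This forces the distance between their $\MOD$-orbits in $\MF(\Sigma)$ to be at least of order $\chi \log \chi$, matching the upper bound.

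The main obstacle lies in the upper bound recursion, specifically in guaranteeing that for an arbitrary triangulation $T$ and an arbitrary choice of balanced arc $a \in T_0$, one can reach a triangulation containing an arc in the $\MOD$-orbit of $a$ at cost $O(\chi)$ flips. This requires showing that an arbitrary triangulation either already contains an arc of each relevant topological type, or can be modified by $O(\chi)$ local flips to produce one. A second delicate point is the identification of the product decomposition of $\F_a$ together with the $\Stab(a)$-action, so that the recursion descends cleanly to modular quotients without hidden losses coming from mapping classes that permute the complementary subsurfaces.
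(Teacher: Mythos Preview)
Your upper bound is in the same spirit as the paper's and would work once the ``main obstacle'' you flag is resolved. The paper handles it concretely: rather than a single recursion on $\chi$, it first introduces a puncture-separating loop (found as the boundary of a regular neighbourhood of a spanning tree of $T$, hence with intersection $O(\kappa)$ with $T$) to split $\Sigma$ into a punctured disk $\Omega$ and a one-holed genus-$g$ surface $\Gamma$, and then runs separate divide-and-conquer recursions on each. The balanced separating arcs at each stage are produced by iteratively cutting along arcs \emph{already present in $T$} (non-separating arcs in the genus case, boundary-to-interior arcs in the puncture case) and then taking a loop parallel to the resulting boundary; this parallelism is what bounds the intersection with $T$ by $O(\kappa)$. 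Note also that Theorem~\ref{thm:convex} is not actually needed for the upper bound: once both triangulations lie in $\F_a$, the distance computed inside the stratum is automatically an upper bound for the ambient distance.

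The lower bound is where your proposal has a genuine gap. Your counting argument throws away too much: bounding the ball of radius $m$ by $(\text{degree})^m \approx \chi^m$ and comparing with $|\MF(\Sigma)| \geq e^{c\chi\log\chi}$ indeed only yields $m \gtrsim \chi$, and the potential-function upgrade you sketch is speculative (the multiset of vertex valences, for instance, is bounded by a function of $\chi$ alone and cannot distinguish triangulations at distance $\chi\log\chi$). The point you are missing is that the ball growth is much slower than the degree bound suggests. Sleator--Tarjan--Thurston's graph-grammar theorem shows that the number of triangulations reachable from a fixed one in $m$ flips is at most $C^{m}\cdot C^{\tilde\kappa}$ for a \emph{universal} constant $C$ independent of $\chi$: a flip is a bounded-size production on the dual cubic graph, and the number of derivations of length $m$ in any fixed finite graph grammar grows only singly exponentially in $m$. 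Feeding $|\MF(\Sigma)| \geq e^{c\chi\log\chi}$ (which the paper gets from Penner's count for the genus part and a Catalan-times-factorial count for the labelled-puncture part) into this bound gives $m \gtrsim \chi\log\chi$ directly, with no need for a potential function.
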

The above result is a combination of results (namely Theorems \ref{thm:uppergenus}, \ref{thm:upperpuncture} and Corollary \ref{cor:card}) from which the constants $L,U$ can be made explicit. When the punctures are not labeled, we obtain similar results and this time the growth rate is linear in $n$ (Theorem \ref{thm:unmarked} and Corollary \ref{cor:count}). 

We note that this result is a generalization of the result of Sleator, Tarjan and W. Thurston mentioned above about the diameters of modular flip graphs of punctured spheres and in fact our lower bounds are obtained using a counting argument and one of their results. Our result also provides a lower bound on the diameters of some slight refinements of the flip graph used in computational geometry and combinatorics. Indeed, it follows that the distance between any two simple triangulations (i.e. not containing multiple edges or loops) of a surface with labelled punctures grows at least like
$$n\log(n) + g\log(g).$$
This can be compared with results of Negami \cite{Negami2, Negami1} and Cortes et al. \cite{Hurtado}. 

We also note that the growth rates are reminiscent of the growth rates of a type of combinatorial moduli space related to cubic graphs. More precisely, one can endow the set of isomorphism types of cubic graphs with $m$ vertices with a metric where one counts the minimal number of {\it Whitehead moves} (or $\tilde{S}$-transformations in the language of \cite{tsukui}). We refer the reader to \cite{Cavendish} or \cite{RT} for the definitions of these terms. With this metric, the diameter of this space is also of rough growth $m \log m$ (Cavendish \cite{Cavendish}, Cavendish-Parlier \cite{CP} and Rafi-Tao \cite{RT}). 

Dual to a triangulation is a cubic graph and flips correspond to specific types of Whitehead moves. One might think in first instance that the two results are in fact the same, but one does not seem to imply the other. On the one hand, flipping only allows for certain moves so the result on flip graphs certainly seems stronger. However, given two cubic graphs with the same number of vertices, there is no guarantee that they are both the dual graph triangulations that lie in the same flip graph. 

This article is organized as follows. 

In the preliminary section, we provide detailed descriptions of the objects we study and some known results. We also prove a number of preliminary results including for instance a new algorithm to reach a stratum with distance bounded by the intersection number. In particular this provides a new proof that intersection number bounds the flip distance between two triangulations. We also provide a lower bound on distance in terms of intersection number. We conclude the section with two results that are somewhat parallel to the rest of the paper about the mapping class group and flip graphs. As far as we know, although both are known, our proofs are new. We provide these results to illustrate the point that flip graphs can be used to effectively study the mapping class group. 

In the third section, we prove Theorem \ref{thm:convex} stated above. We then provide two applications of this result. The first is about projections to strata and the second is to the large scale geometry of the mapping class group as discussed above. 

The final section is about the diameters of modular flip graphs. We begin with upper bounds - first in terms of genus and then in terms of the number of punctures - and we end with the lower bounds.

{\bf Acknowledgements.}

Part of this work was carried out while the first author was visiting the second author at the University of Fribourg. She is grateful to the department and the staff for the warm hospitality. She also acknowledges the support of Indiana University Provost's Travel Award for Women in Science.

The authors acknowledge support from U.S. National Science Foundation grants DMS 1107452, 1107263, 1107367 "RNMS: GEometric structures And Representation varieties" (the GEAR Network).

We would like to thank Javier Aramayona, Chris Connell, Chris Judge, Chris Leininger, Athanase Papadopoulos, Bob Penner, Lionel Pournin and Dylan Thurston for their encouragement and enlightening conversations. 

\section{Preliminaries}

In this section we describe in some detail the objects we are interested in and introduce tools we use in the sequel. Most of the results we state are already known, although some of the proofs we provide are new (or at least we did not find them in the literature). In particular, at the end of this section we give two quick examples of results one can prove using flip graphs. Neither are essential in the sequel and are just provided for illustrative purposes. 

\subsection{Definitions and setup}

We begin with the basic setup which starts with a topological orientable connected surface $\Sigma$ and finite set of marked points on it. Unless specifically stated, $\Sigma$ will be assumed to be triangulable. It is of finite type, has boundary which can consist of marked points, and boundary curves, and each boundary curve must have at least one marked point on it. We make the distinction between {\it labelled} and {\it unlabelled} marked points when we look at how homeomorphisms are allowed to act on $\Sigma$ - this will made explicit in what follows. 

Sometimes marked points that do not lie on a boundary curve will be referred to as {\it punctures}.

To such a $\Sigma$ one can associate its {\it arc complex} $\A(\Sigma)$, a simplicial complex where vertices are isotopy classes of simple {\it arcs} based at the marked points of $\Sigma$. Simplices are spanned by {\it multiarcs} (unions of isotopy classes of arcs disjoint in their interior). We won't explicitly use this complex so we won't describe it in full detail, but we will be interested in the graph which is the $1$-skeleton of the cellular complex dual to $A(\Sigma)$: the flip graph of $\Sigma$. 

The flip graph $\F(\Sigma)$ can be described differently as follows. Vertices of this graph are maximal multiarcs so they decompose $\Sigma$ into triangles. We refer to these multiarcs as {\it triangulations} (although they are not always proper triangulations in the usual sense - we apologize any confusion which incurs from this by quite common terminology).

Two vertices of $\F(\Sigma)$ share an edge if they differ by a so-called {\it flip}. If $a$ is an arc of a triangulation $T$ which belongs to two triangles which form a quadrilateral, a {\it flip} is the operation which consists of replacing $a$ by the other diagonal arc $a'$ of the quadrilateral. 

\begin{figure}[h]
\begin{center}
\includegraphics[width=6cm]{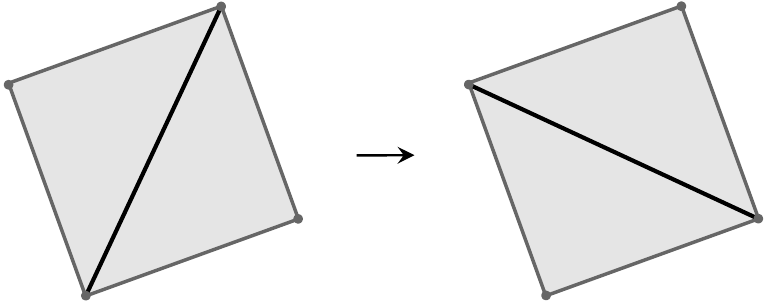}
\caption{A local picture of a flip}
\label{fig:flip}
\end{center}
\end{figure}

Note that certain arcs are not {\it flippable} - this occurs exactly when an arc is contained in a punctured disc surrounded by another arc.

\begin{figure}[h]
\begin{center}
\includegraphics[width=3.0cm]{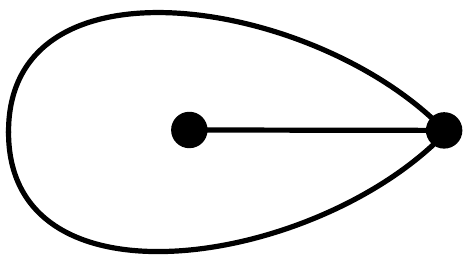}
\caption{The central arc is unflippable}
\label{fig:unflippable}
\end{center}
\end{figure}

We denote by $\kappa = \kappa(\Sigma)$ the number of arcs in (any) triangulation of $\Sigma$ and by $\tilde{\kappa}=\tilde{\kappa}(\Sigma)$ the number of triangles in the complement of any triangulation of $\Sigma$. Via an Euler characteristic argument, one obtains $\tilde{\kappa}(\Sigma) = 4g + 4b +2s +p -6$ and $\kappa(\Sigma)= 6g + 3b + 3s + p - 6$, where $g$ is the genus of $\Sigma$, $s$ is the number of punctures, $b$ is the number of boundary components and $p$ is the number of points on the boundary curves of $\Sigma$.

Some flip graphs are finite - such as the flip graph of a polygon - but provided the underlying surface has enough topology, $\F(\Sigma)$ is infinite. A simple example of an infinite flip graph is given by the flip graph of a cylinder with a single marked point on each boundary curve. By the above formula, vertices of $\F(\Sigma)$ are of degree $2$, and as it is both infinite and connected, it is isomorphic to the infinite line graph. 

Associated to a multiarc $A$ is a {\it stratum} $\F_A(\Sigma)$ which is the flip graph of triangulations of $\Sigma$ which contain the multiarc $A$. We say that a stratum is {\it strongly convex} if any geodesic between two of its points is entirely contained in the stratum (sometimes this property is referred to as being {\it totally geodesic}). 

Naturally, if $A$ is not separating, $\F_A(\Sigma)$ is isomorphic to the flip graph of $\Sigma \setminus A$ (the surface {\it cut along} the multiarc $A$). There is something to be said here - we think of the result of the operation of cutting not as being the deletion of the arcs but by doubling the arcs and then separating them. For instance, if you cut a once punctured torus along an arc, the result is a cylinder with a marked point on each boundary curve. If $A$ is separating, $\F_A(\Sigma)$ is isomorphic to the product of the product of the flip graphs of the connected components of $\Sigma \setminus A$. In the rest of the paper we will denote by $|A|$ the number of arcs in $A$.

As arcs are thought of as isotopy classes of arcs, the intersection $i(a,b)$ between arcs $a$ and $b$ is defined to be the minimum number of intersection points between their representatives. Generally we assume arcs and multiarcs to be realized in minimal position. If $A$ and $B$ are multiarcs, their \emph{intersection number} is defined as 
$$i(A,B) = \sum_{b \in B} \sum_{a\in A} i(a,b).$$

In terms of intersection, two triangulations $T,T'$ are related by a flip if they satisfy
$$
i(T,T')=1.
$$

The flip graph is known to enjoy a number of properties. 

First of all, for any topological type of $\Sigma$, $\F(\Sigma)$ is a connected graph. There are several different proofs of this fact (see for instance Hatcher \cite{Hat}). We will consider the edges of the flip graph of length 1 and we will endow the flip graph with its shortest path distance. The distance between two triangulations is then equal to the minimum number of flips required to pass from one to the other. In particular there is the following quantitative version relating distance and intersection number which can be deduced from an algorithm described by Mosher in \cite{Mosher1} and Penner in \cite{Pen5}. 
\begin{lemma}\label{lem:mosher}
For any triangulation $S,T \in \F(\Sigma)$ we have $d(S,T) \leq i(T,S)$. 
\end{lemma}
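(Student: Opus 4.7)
The plan is to prove the bound by induction on $i(S,T)$. The base case, $i(S,T)=0$, reduces to the fact that two disjoint maximal multiarcs must coincide, giving $S=T$ and $d(S,T)=0$. For the inductive step it is enough to produce, whenever $i(S,T)\ge 1$, a single flip $S\to S'$ with $i(S',T)\le i(S,T)-1$: the inequality then follows from
$$
d(S,T)\le 1+d(S',T)\le 1+i(S',T)\le i(S,T).
$$

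To locate such a flip, I would realize $S$ and $T$ in minimal position and pick an arc $a\in T$ that crosses $S$. Following $a$ from one of its endpoints, let $e\in S$ be the first edge it meets, and let $Q$ be the quadrilateral consisting of the two triangles of $S$ adjacent to $e$ (with $e$ as diagonal). Denote by $e'$ the other diagonal of $Q$, so that flipping $e$ replaces $e$ by $e'$. Because $a$ enters $Q$ through a vertex $v$ of $Q$ and only then crosses $e$, the initial portion of $a$ lying in $Q$ contributes a crossing with $e$ but no crossing with $e'$. A local bigon/minimal position argument inside $Q$ then shows $i(e',T)\le i(e,T)-1$, which gives $i(S',T)\le i(S,T)-1$ since the only arcs of $S$ that change under the flip are $e$ and $e'$.

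The main obstacle is two-fold. First, one must verify that the chosen edge $e$ is flippable; the only obstruction is that $e$ bounds a once-punctured monogon as in Figure~\ref{fig:unflippable}, and such an edge cannot occur as the first crossing of an arc $a\in T$ placed in minimal position, since $a$ would then be forced either to re-cross $e$ or to be isotopic into the monogon, contradicting minimality or the maximality of $T$. Second, one must control the crossings of \emph{all} arcs of $T$ with $e'$ simultaneously; this is a local computation in the quadrilateral $Q$, relying on the pairwise disjointness of arcs of $T$ and on minimal position to argue that each new crossing of some $b\in T$ with $e'$ is matched by a crossing with $e$ that it replaces. Together these two points yield the required strict decrease and complete the induction, recovering the bound produced by the Mosher and Penner flip algorithms referenced in the statement.
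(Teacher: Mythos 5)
Your overall strategy---induction on $i(S,T)$ by producing a single flip that strictly decreases intersection---matches the paper's, but the specific flip you propose does not always decrease $i(S,T)$, and this is a genuine gap. You flip the first edge $e$ of $S$ met by an arc $a\in T$ when followed from an endpoint $v$. The initial segment of $a$ in the quadrilateral $Q$ does cross $e$ and miss $e'$, but the ``matching'' you assert---that every new crossing of $T$ with $e'$ is compensated by a lost crossing with $e$---is false. A segment of $T$ inside $Q$ that wraps around the vertex of $Q$ opposite $v$ (the other endpoint of $e'$), or that emanates from an endpoint of $e$ and exits across $e'$, gains a crossing with $e'$ without losing one with $e$, and nothing in your setup forces such segments to be outnumbered. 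Concretely, take $\Sigma$ a hexagon with vertices $1,\dots,6$, $S=\{13,35,15\}$ and $T=\{14,24,46\}$, so $i(S,T)=5$. Following $a=24$ from vertex $2$, the first edge of $S$ met is $e=13$, whose quadrilateral has vertices $1,2,3,5$ and $e'=25$. Only $24$ crosses $13$, while both $14$ and $46$ miss $13$ but cross $25$, so flipping $13$ to $25$ yields $i(S',T)=5-1+2=6>5$. (Following $46$ from vertex $6$ fails similarly.) Thus the local estimate $i(e',T)\le i(e,T)-1$ simply does not hold for your chosen $e$.

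The paper's proof closes exactly this gap by a different choice of flip: it flips the arc $h$ of the moving triangulation that \emph{maximally} intersects the target (Lemmas~\ref{flip-convenient-0}--\ref{flip-convenient-lemma} and Theorem~\ref{our-algo}). Maximality gives $i(y,A)\le i(h,A)$ and $i(z,A)\le i(h,A)$ for the two adjacent edges $y,z$ of the quadrilateral, and combining this with the terminal-triangle analysis yields the strict decrease. In the hexagon example the maximally intersecting edge is $35$, not $13$, and flipping $35$ does reduce the intersection number. Your flippability observation (the first of your two ``obstacles'') is correct and corresponds to Lemma~\ref{flip-convenient-0}, but the second point is precisely where the maximality hypothesis is needed and cannot be dispensed with.
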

We will give an alternative proof of this lemma in Section \ref{ss:upper}.

The homeomorphisms of $\Sigma$ considered here always fix pointwise the labelled points of $\Sigma$ and setwise the unlabelled. Permutations of the unlabelled points are allowed. The \emph{mapping class group} $\MOD(\Sigma)$ of $\Sigma$ is the group of orientation preserving homeomorphisms of $\Sigma$ up to isotopy. Isotopies here always fix pointwise the set of the marked points of $\Sigma$. The group $\MOD(\Sigma)$ act simplicially by automorphisms on $\F(\Sigma)$. It is a result of Korkmaz and Papadopoulos \cite{Kork-Pap} that except for some low complexity cases, the automorphism group of $\F(\Sigma)$ is exactly the {\it extended mapping class group} of $\Sigma$ - i.e. the group of homeomorphisms up to isotopy (orientation reversing homeomorphisms are also allowed). Related to this result, is a result about subgraphs of flip graphs that are graph isomorphic to other flip graphs. Except for some complexity cases again, such subgraphs only arise in the natural way - as strata associated to a certain multiarc \cite{AKP}.

We will be interested in the geometry of the flip graph as a metric space. We recall a few notions of metric geometry that we will use later in the paper. 

Let $(X, d_X)$ and $(Y, d_Y)$ be two metric spaces. A map
$$f:(X,d_X) \to (Y,d_Y)$$
is a $(\lambda, \epsilon)$-\emph{quasi-isometric embedding} if for all $x, x' \in X$ we have
$$\frac{1}{\lambda} d_X(x, x') - \epsilon \leq d_Y(f(x), f(x')) \leq \lambda d_X(x, x') + \epsilon.$$
We say that a quasi-isometric embedding $f$ is a \emph{quasi-isometry} there exists $R \geq 0$ such that the image of $f$ is $R$-dense in $Y$. Equivalently, $f$ is a quasi-isometry if there exists a quasi-isometric embedding $g: Y \to X$ and a constant $K\geq 0$ such 
that for all $x \in X$ and for all $y \in Y$ we have $d_X( g\circ f(x), x) \leq K$ and $d_Y( f\circ g(y), y) \leq K$. We say that $g$ is a \emph{quasi-inverse} of $f$. 

The following lemma is a classic result in geometric group theory. 
\begin{lemma}[\v{S}varc-Milnor]
Let $G$ be a group acting on a metric space $(X, d_X)$ properly and cocompactly by isometries. Then $G$ is finitely generated and for every $x\in X$ the orbit map 
\begin{align*}
\omega_X : G & \to X \\
g &\mapsto g(x)
\end{align*}
is a quasi-isometry. 
\end{lemma}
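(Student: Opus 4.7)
The proof follows the classical \v{S}varc--Milnor strategy: build a finite generating set $S$ for $G$ using cocompactness (so that words can reach everywhere) together with properness (so that $S$ itself is finite), then show the resulting word metric on $G$ is comparable, via the orbit map, to $d_X$.

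For the setup I would fix a basepoint $x \in X$ and, using cocompactness, choose a relatively compact open set $K$ with $G \cdot K = X$ and $x \in K$. Picking $R>0$ with $K \subset B(x,R)$, define
$$
S \;:=\; \{\, g \in G \setminus \{e\} \;:\; gK \cap \overline{B(x,3R)} \neq \emptyset \,\}.
$$
Properness of the action ensures that only finitely many $G$-translates of $K$ can meet the compact set $\overline{B(x,3R)}$, so $S$ is finite.

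The heart of the argument is to prove that $S$ generates $G$, and to extract the lower bound of the quasi-isometry along the way. Implicitly $X$ is a length space (in the applications, proper geodesic), as is the flip graph which is the intended target of this lemma. Given $g\in G$, I would take an approximate geodesic $\gamma$ from $x$ to $gx$, subdivide it into $N = \lceil d_X(x,gx)/R \rceil$ pieces of length at most $R$, and at each successive subdivision point $p_i$ pick $h_i \in G$ with $p_i \in h_i K$, taking $h_0=e$ and $h_N=g$. A triangle-inequality computation then shows that $s_i := h_{i-1}^{-1} h_i$ lies in $S \cup \{e\}$, so $g=s_1\cdots s_N$ and $d_S(e,g) \leq d_X(x,gx)/R + 1$.

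The reverse inequality is routine: setting $D := \max_{s \in S} d_X(x, sx)$, the fact that $G$ acts by isometries gives $d_X(x, gx) \leq D \cdot d_S(e,g)$. Together, these inequalities show that the orbit map $\omega_x$ is a $(\max\{R,D\},1)$-quasi-isometric embedding; cocompactness then adds that its image is $R$-dense, upgrading it to a quasi-isometry. The one subtle point — and the main place where one has to be careful — is the verification that $h_{i-1}^{-1} h_i \in S \cup \{e\}$: one checks that the translate $h_{i-1}^{-1} h_i K$ contains the point $h_{i-1}^{-1}p_i$, which sits within distance $2R$ of $x$, hence meets $\overline{B(x,3R)}$. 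Care is needed to track the various constants and to make sure the initial choice of $K$ is robust enough, but nothing deeper is required.
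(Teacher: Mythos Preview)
The paper does not prove this lemma at all: it is stated as a classical result in geometric group theory and then invoked without argument. Your write-up is the standard \v{S}varc--Milnor proof and is correct; in particular, you correctly flag that the statement as written omits the length-space (or geodesic) hypothesis on $X$, which is genuinely needed for the subdivision step and is satisfied in the paper's application to flip graphs.
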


The mapping class group acts on the flip graph by isometries. The \v{S}varc-Milnor lemma applies directly to the flip graph and the mapping class group, and of course is only interesting when $\F(\Sigma)$ is of infinite diameter. 
\begin{lemma}\label{mcg-flipgraph}
For every triangulation $T \in \MOD(\Sigma)$ the orbit map 
\begin{align*}
\omega_T : \MOD(\Sigma) & \to \F(\Sigma) \\
g &\mapsto g(T)
\end{align*}
is a quasi-isometry. 
\end{lemma}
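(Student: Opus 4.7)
The plan is to verify the three hypotheses of the \v{S}varc-Milnor lemma for the action of $\MOD(\Sigma)$ on $\F(\Sigma)$ (together with an implicit assumption that $\F(\Sigma)$ is a geodesic, or at least length, metric space, which follows since it is a connected graph with unit edge lengths). The three hypotheses are: the action is by isometries, it is proper, and it is cocompact.

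For the isometric action, I would simply observe that $\MOD(\Sigma)$ acts on $\F(\Sigma)$ by simplicial automorphisms, as noted in the preliminaries, and since the metric is induced by unit edge lengths any such automorphism is an isometry.

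For cocompactness, I would argue that the quotient $\F(\Sigma)/\MOD(\Sigma)$ is finite. The combinatorial type of a triangulation $T$ is encoded by a finite object (say, the dual trivalent graph together with the data of which vertices of $\Sigma$ are endpoints of which arcs, of cardinality bounded in terms of $\kappa(\Sigma)$ and $\tilde\kappa(\Sigma)$), and by the classification of surfaces any two triangulations with the same combinatorial type differ by an orientation-preserving homeomorphism of $\Sigma$ respecting the marked points. Hence there are only finitely many $\MOD(\Sigma)$-orbits of vertices, and since $\F(\Sigma)$ is locally finite the quotient is a finite graph; equivalently any orbit is a cobounded net.

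For properness, I would show that the stabilizer of any triangulation $T \in \F(\Sigma)$ is finite. A mapping class $[\varphi]$ fixing $T$ must permute the finite set of arcs and triangles of $T$, and this combinatorial action determines $[\varphi]$ up to isotopy: indeed, once a homeomorphism is specified on a triangulation it is determined up to isotopy rel marked points on each triangle (which is a disk with marked boundary points), hence globally. Thus the stabilizer injects into a finite permutation group and is finite. The same argument applied to vertices at bounded distance from $T$ shows that only finitely many elements of $\MOD(\Sigma)$ move $T$ a bounded distance, which is the version of properness needed.

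With these three conditions in hand, the \v{S}varc-Milnor lemma applies and yields the conclusion that the orbit map $\omega_T$ is a quasi-isometry. The only mildly subtle step is properness, where one must invoke the fact that a homeomorphism of a surface is determined up to isotopy (rel marked points) by its action on a sufficiently rich combinatorial structure; everything else is essentially a bookkeeping verification.
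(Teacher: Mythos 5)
Your proof is correct and follows essentially the same route as the paper: verify the Švarc-Milnor hypotheses, with cocompactness via the finiteness of combinatorial types of triangulations and properness via finite stabilizers, the latter coming from the fact that a homeomorphism fixing all arcs of a triangulation is isotopic to the identity (the Alexander lemma). The only cosmetic difference is that you spell out the triangle-by-triangle argument rather than citing the Alexander lemma by name, and you explicitly note that finite stabilizers together with local finiteness of the graph give the version of properness needed.
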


\begin{proof}
We will use the \v{S}varc-Milnor Lemma. The action of $\MOD(\Sigma)$ on $\F(\Sigma)$ is cocompact since there is only a finite number of ways to glue $\tilde{\kappa}$ triangles to get a surface homeomorphic to $\Sigma$. 
For a triangulation $T \in \F(\Sigma)$, we denote by $\Stab(T)$ its stabilizer in $\MOD(\Sigma)$ (the group of mapping classes that fix $T$ setwise). We will prove that for every $T$ the stabilizer $\Stab(T)$ is finite and this suffices to prove that the action of $\MOD(\Sigma)$ on $\F(\Sigma)$ is proper. Indeed, every mapping class in $\Stab(T)$ induces a permutation of the arcs in $T$, and there is a short sequence of groups
$$ 1 \to \Stab(T) \to \mathcal S_\kappa $$
where $\mathcal S_\kappa$ is the symmetric group on $\kappa$ elements. 
The sequence is exact since a mapping class that fixes every arc of a triangulation is the identity by the Alexander lemma (see for instance \cite{FM}).

The last assertion follows directly from the \v{S}varc-Milnor lemma.
\end{proof}

We define the \emph{modular flip graph} $\MF(\Sigma)$ as the quotient of $\F(\Sigma)$ under the action of $\MOD(\Sigma)$. We remark that points in $\MF(\Sigma)$ are triangulations of $\Sigma$ up to homeomorphisms. By the above lemma $\MF(\Sigma)$ is a connected finite graph that inherits a well-defined distance from $\F(\Sigma)$. We note that an orbit map in Lemma \ref{mcg-flipgraph} is $ \diam\left(\MF(\Sigma)\right)$-dense in $\F(\Sigma)$ by the \v{S}varc-Milnor Lemma. We will later investigate the diameter of $\MF(\Sigma)$.

The following result will be a helpful tool in our computation. This result was first proved by Sleator-Tarjan-Thuston \cite{STT2} provided that $n$ is large enough. Recently Pournin \cite{Pournin} provided a combinatorial proof and proved the lower bounds for all $n>12$. 
\begin{theorem}\label{th:STT}
If $\Sigma$ is a disk with $n > 12$ labelled points on the boundary then $\F(\Sigma)$ has diameter $2n - 10$. 
\end{theorem}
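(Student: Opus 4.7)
The plan is to prove the two bounds separately, with the upper bound being relatively combinatorial and the lower bound requiring deeper geometric input. For the upper bound, I would argue by induction on $n$ that any two triangulations $T, T'$ of the $n$-gon satisfy $d(T,T') \leq 2n-10$. The basic observation is that any triangulation can be reduced in at most $n-3$ flips to a \emph{fan} based at a chosen boundary vertex $v$: simply flip each diagonal not incident to $v$ in turn, reducing the number of non-$v$-incident diagonals each step. Choosing the same $v$ for both $T$ and $T'$ immediately yields the weaker bound $d(T,T') \leq 2n-6$, so to reach $2n-10$ one must save four flips by exploiting shared structure between $T$ and $T'$.

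The natural tool for this is Theorem \ref{thm:convex} together with the product description of strata: if $T$ and $T'$ share a diagonal $a$, then every geodesic from $T$ to $T'$ lies in $\F_a$, which is isomorphic to the product of the flip graphs of the two subpolygons $P_1, P_2$ of sizes $n_1, n_2$ with $n_1+n_2 = n+2$. By induction, provided both $n_i > 12$, this yields $d(T,T') \leq (2n_1-10)+(2n_2-10) = 2n-16$, beating the target; the cases where one $P_i$ is small are handled by explicit diameter bounds on small-polygon flip graphs. The remaining scenario is that $T$ and $T'$ share no diagonal, which is addressed separately by showing one can always flip $T$ once to produce a diagonal also appearing in $T'$ (or a diagonal of a suitable intermediate triangulation), again saving enough to bring the total below $2n-10$.

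For the lower bound, I would follow the hyperbolic volume approach of Sleator--Tarjan--Thurston. To each triangulation $T$ of the $n$-gon one associates a convex ideal polyhedron $P_T \subset \Hyp^3$ built by gluing one ideal tetrahedron per triangle of $T$ according to the adjacency pattern. A single flip corresponds to replacing a pair of glued tetrahedra by a different pair (a Pachner-like $2$--$2$ move), and the resulting change in hyperbolic volume is bounded above by a universal constant $v_0$. One then exhibits specific pairs of triangulations $T, T'$---concretely, certain carefully chosen zig-zag triangulations---for which the volumes of $P_T$ and $P_{T'}$ differ by at least $v_0(2n-10)$, forcing $d(T,T') \geq 2n-10$.

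The main obstacle is the lower bound. While the upper bound has a clean inductive flavor resting on the strong convexity of strata, proving sharpness requires the global geometric input of hyperbolic volumes (or alternatively the intricate combinatorial analysis of Pournin). Obtaining the sharp additive constant $-10$, rather than merely the correct linear asymptotic in $n$, is the subtle point: in both existing proofs it is tied to a detailed analysis of the extremal pairs of triangulations and a matching of the volume accounting to the combinatorics of flips, which is precisely why the result was obtained only after the introduction of the hyperbolic polyhedra and why the sharpness for all $n>12$ remained open until Pournin's work.
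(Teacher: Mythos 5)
The paper does not prove this theorem: it is cited as a known result of Sleator--Tarjan--Thurston \cite{STT2} (for large $n$) sharpened to all $n>12$ by Pournin \cite{Pournin}, and is used as a black box elsewhere in the paper. So there is no internal proof to compare against, and your sketch must be judged on its own terms.

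On the upper bound, your plan is sound in outline (fan reduction gives $2n-6$, strong convexity of strata plus the product structure of $\F_a$ handles the case of a shared diagonal), but the crux -- when $T$ and $T'$ share no diagonal -- is waved away. The claim that ``one can always flip $T$ once to produce a diagonal also appearing in $T'$'' is not justified and is not obviously true: the $n-3$ diagonals reachable from $T$ in one flip and the $n-3$ diagonals of $T'$ are both small subsets of the $\binom{n}{2}-n$ possible diagonals, and nothing forces them to meet. The actual STT argument in this case works through vertex degrees (a counting argument shows there is a vertex with $\deg_T(v)+\deg_{T'}(v)$ small, which one then exploits to save flips), a genuinely different and more delicate mechanism. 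You also need to be careful with the induction when a subpolygon drops below $13$ vertices, since $2k-10$ is not the diameter there, though that part is only bookkeeping.

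On the lower bound, the description of the hyperbolic-volume method is incorrect in a way that matters. You say one attaches a convex ideal polyhedron $P_T$ to \emph{each} triangulation $T$ and then compares $\mathrm{vol}(P_T)$ with $\mathrm{vol}(P_{T'})$; but there is no canonical ideal polyhedron associated to a single planar triangulation (a triangulation of an $n$-gon has $n-2$ triangles, which is not a 3-dimensional object), and a single flip is not a $2$--$2$ Pachner move on such a fixed polyhedron. The STT argument instead associates a 3-dimensional layered triangulation to an entire \emph{flip path} $T=T_0,\dots,T_k=T'$, with one ideal tetrahedron per flip; since each ideal tetrahedron has volume at most $v_3$, a fixed polyhedron $P$ with boundary split between $T$ and $T'$ forces $k \ge \mathrm{vol}(P)/v_3$, and one then produces explicit $P$'s of the right volume. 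So the mechanism is a bound on path length via a fixed ambient polyhedron, not a comparison of two volumes each depending on a single triangulation. As you note, even this only gives the sharp bound for $n$ large, and the full range $n>12$ requires Pournin's separate combinatorial argument, which your sketch does not engage.
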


We finally note that an orbit map in Lemma \ref{mcg-flipgraph} is $\diam( \MF(\Sigma))$-dense in $\F(\Sigma)$ by the \v{S}varc-Milnor Lemma. 

\subsection{Intersection number and distances}

\subsubsection{An upper bound}\label{ss:upper}
In this section we describe an algorithm to get from a triangulation $T$ to a stratum associated to a multiarc $A$. To do this, we prove that there exists an arc in $T$ that intersects $A$ maximally and such that its flip reduces the number of intersections with $A$. This provides an alternative proof of Lemma \ref{lem:mosher} above. 

\begin{definition}
Let $\Delta$ be a triangle in $\Sigma \setminus T$ and $A$ a multiarc. We say that $\Delta$ is \emph{terminal} for $A$ if there exists $a \in A$ such that $\Delta$ is the first or the last triangle crossed by $a$ (see Figure \ref{fig:terminal} for an example). 

\begin{figure}[h]
\leavevmode \SetLabels
\L(.51*.56) $a$\\
\endSetLabels
\begin{center}
\AffixLabels{\centerline{\epsfig{file =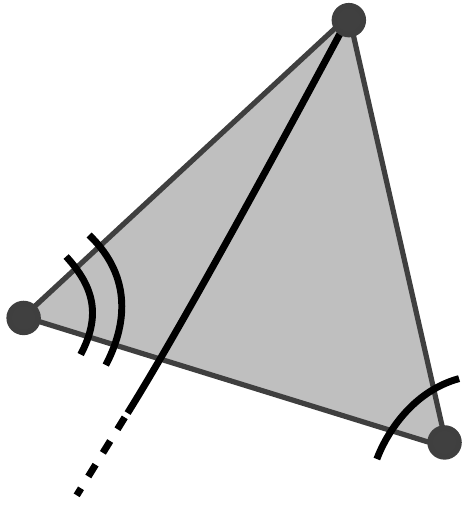,width=3.5cm,angle=0} }}
\vspace{-30pt}
\end{center}
\caption{A terminal triangle for $a$} \label{fig:terminal}
\end{figure}
\end{definition}

\begin{definition}
Let $T$ be a triangulation and $A$ a multiarc. Let $t \in T$ be a flippable edge and $T'$ the triangulation obtained by the flip of $t$. 
We say that flipping $t$ is \emph{convenient} if 
$i(T', A) < i(T, A)$ and flipping $t$ is \emph{neutral} if $i(T', A) = i(T, A)$. 
\end{definition}
Denote by $t'$ is the arc obtained flipping $t$. Note that flipping $t$ is convenient if and only if $i(t',A) < i(t,A)$. Similarly, flipping $t$ is neutral if and only if $i(t',A) = i(t,A) $. Also note that an arc may be neither convenient nor neutral. 

\begin{lemma}\label{flip-convenient-0}
Assume $i(h,A) >0$. If $h\in T$ is an arc such that $i(h, A) = \max_{t \in T} i(t, A)$, then $h$ is flippable in $T$. 
\end{lemma}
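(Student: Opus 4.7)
The plan is to argue by contradiction. Suppose $h$ is unflippable. By the characterization recalled in the excerpt, $h$ is then contained in a punctured disc $D \subset \Sigma$ surrounded by another arc $h' \in T$: concretely, $h'$ is a loop at some vertex $q$, the disc $D$ bounded by $h'$ has a single puncture $p$ in its interior, and $h$ is the arc in $D$ from $q$ to $p$. The strategy is to derive a contradiction by showing $i(h', A) > i(h, A)$, which violates the assumed maximality of $i(h, A)$ over $T$.

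Fix any $a \in A$ realized in minimal position with $T$. Since $h, h' \subset D$, every intersection of $a$ with either arc lies in $D$, so it suffices to analyze $a \cap D$. The connected components of $a \cap D$ are of two kinds: sub-arcs $\gamma$ with both endpoints on $h' = \partial D$ (each contributing $2$ to $i(a, h')$), and sub-arcs $\tau$ with one endpoint on $h'$ and the other at the puncture $p$ (each contributing $1$ to $i(a, h')$). For a sub-arc of $\tau$-type, the disc $D$ has $p$ as its unique interior marked point, so any two simple arcs from $\partial D$ to $p$ are isotopic rel endpoints and $\tau$ has a representative disjoint from $h$; in minimal position we conclude $i(\tau, h) = 0$. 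For a sub-arc of $\gamma$-type, $\gamma$ is a simple arc in the annulus $D \setminus \{p\}$ with fixed endpoints on the outer boundary $h'$, and such arcs have exactly two isotopy classes rel endpoints (distinguished by which side of $\gamma$ the puncture lies on); in each class the minimum intersection with $h$ is at most $1$, so in minimal position $i(\gamma, h) \leq 1$.

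Combining these local bounds, whenever a component of $a \cap D$ contributes to $i(a, h)$ it contributes strictly less to $i(a, h)$ than to $i(a, h')$: a $\tau$-component contributes $0 < 1$ and a $\gamma$-component crossing $h$ contributes at most $1 < 2$. Summing over components of $a \cap D$ yields $i(a, h) \leq i(a, h')$, with strict inequality whenever $i(a, h) > 0$. Summing further over $a \in A$ and using the hypothesis $i(h, A) > 0$ produces $i(h, A) < i(h', A)$, contradicting the maximality of $h$ in $T$. The main obstacle I foresee is pinning down the bound $i(\gamma, h) \leq 1$ cleanly; the most direct route appears to be via the classification of simple arcs in the annulus $D \setminus \{p\}$, although a bigon-criterion argument inside $D$ also works, exploiting that only one puncture is available to host a non-reducible bigon.
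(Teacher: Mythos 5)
Your proof is correct and takes essentially the same route as the paper: assume $h$ is unflippable, so it is enclosed by an arc $h^\star$ bounding a once-punctured disk, and derive $i(h^\star, A) > i(h, A)$, contradicting maximality. The paper simply asserts the slightly sharper factor-of-two bound $i(h^\star, A) \geq 2\,i(h, A)$; your component-by-component analysis of $a \cap D$ (each $\gamma$-type subarc meets $h^\star$ twice but $h$ at most once, each $\tau$-type subarc meets $h^\star$ once and $h$ not at all) is precisely the justification the paper leaves implicit.
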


\begin{proof}
If $h$ is not flippable then there exists an arc $h^\star \in T$ that surrounds $h$ and bounds a once-punctured disk. Hence $i(h^\star, A) \geq 2 i(h, A)$ in contradiction with our condition on $h$. 
\end{proof}

\begin{lemma}\label{flip-convenient-1}
Assume $i(T,A) >0$. Let $h \in T$ be an arc such that $i(h, A) = \max_{t \in T} i(t, A)$. Let $Q$ be the quadrilateral containing $h$ as a diagonal. 
If $Q$ contains a terminal triangle for $A$ then flipping $h$ is convenient.
\end{lemma}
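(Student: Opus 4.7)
The plan is to show $i(h',A) < i(h,A)$, which is equivalent to convenience since $T$ and $T'$ differ only in the arc $h$ versus $h'$ (all other arcs are shared, so $i(T,A)-i(T',A)=i(h,A)-i(h',A)$). Let $a\in A$ be an arc for which a triangle $\Delta\subset Q$ is terminal, and let $v$ denote the endpoint of $a$ lying at a vertex of $\Delta$. The four vertices of $Q$ consist of the two endpoints of $h$ together with two ``apex'' vertices, which are precisely the endpoints of $h'$.

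The main task, and the chief obstacle, is to use the maximality of $h$ to rule out the case that $v$ is an endpoint of $h$. Suppose for contradiction it were: in minimal position the first strand of $a$ cannot cross $h$ near $v$ (otherwise it would bound a bigon with $h$ inside $\Delta$) and must therefore leave $\Delta$ through the side $e$ of $\Delta$ opposite $v$, which is a side of $Q$ belonging to $T$. This strand contributes to $i(e,A)$ but not to $i(h,A)$. Moreover, since $A$ is a multiarc, its strands inside $Q$ are pairwise disjoint; the $v\to e$ strand separates $\Delta$ into two regions and hence forbids every other strand of $A$ in $Q$ whose $\Delta$-piece would have to cross it. A careful enumeration of the surviving strand types shows that every contribution to $i(h,A)$ is matched by a contribution to $i(e,A)$, so one deduces $i(e,A)\geq i(h,A)+1$, contradicting the maximality of $h$.

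Hence $v$ must be an apex of $\Delta$. The first strand of $a$ then has to exit $\Delta$ across the only side of $\Delta$ not adjacent to $v$, namely $h$; it crosses $h$ exactly once and continues into the other triangle of $Q$, leaving $Q$ via a side of that triangle or terminating at the opposite apex. Since $v$ is an endpoint of $h'$, this strand can be pushed off $h'$ at $v$ and does not cross $h'$ in minimal position. Therefore this terminal strand contributes exactly $+1$ to $i(h,A)-i(h',A)$.

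Finally, one shows that the contributions from all the remaining strands of $A$ in $Q$ sum to a non-negative quantity. The terminal strand's piece inside $\Delta$ separates $\Delta$ into two regions and, by disjointness, forbids several ``parallel to $h'$'' strand types that would contribute negatively to $i(h,A)-i(h',A)$. Combining these topological restrictions with the remaining maximality inequality $i(h,A)\geq i(e_i,A)$ applied to the side $e_i$ of $Q$ through which the terminal strand exits $Q$ gives the desired non-negativity. Together with the $+1$ from the terminal strand we conclude $i(h,A)-i(h',A)\geq 1$, so flipping $h$ is convenient.
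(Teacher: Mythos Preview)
Your proposal follows essentially the same strategy as the paper's proof. Both arguments proceed in two stages: first ruling out the possibility that the terminal vertex $v$ lies at an endpoint of $h$ (equivalently, that the first arc of $T$ crossed by $a$ is a side of $Q$ rather than $h$) by exhibiting a side $e$ of $Q$ with $i(e,A)\geq i(h,A)+1$; then, once $v$ is known to be an apex, using the $+1$ contribution of the terminal strand together with the maximality inequality $i(h,A)\geq i(z,A)$ for the side $z$ through which that strand exits $Q$ to conclude $i(h',A)<i(h,A)$.

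The only real difference is packaging: where you argue qualitatively via the separating property of the terminal strand and an unspecified ``careful enumeration'', the paper names the possible strand types in $Q$ by integer parameters $\alpha,\beta,\gamma,\eta,\epsilon,\epsilon',\epsilon''$, identifies three explicit configurations, and carries out the inequality $i(z,A)\leq i(h,A)$ as a one-line computation in those parameters (yielding $i(h',A)-i(h,A)\leq -\alpha-\epsilon\leq -1$). Your separation argument is exactly what forces the vanishing of the ``bad'' parameters in each configuration, so the two presentations are equivalent. One small point you should make explicit when writing this up: the case where the terminal strand terminates at the opposite apex rather than exiting through a side of $Q$ (so there is no $e_i$) needs a word, though it is easy since then that strand is isotopic to $h'$ inside $Q$.
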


\begin{proof}

Let $a\in A$ be an arc that terminates on $Q$. We begin by observing that if $h$ is not the first arc of $T$ that $a$ crosses from its terminal point, $h$ is not maximal. Indeed, if $h'\in T$ is the first arc crossed, then any arc that crosses $h$ is forced to cross $h'$ and it has 

$$i(h',A) \geq i(h,A)+1,$$
in contradiction with the maximality of $h$ (see Figure \ref{fig:Hmax1}).

\begin{figure}[h]
\leavevmode \SetLabels
\L(.505*.45) $h$\\
\L(.45*.08) $h'$\\
\endSetLabels
\begin{center}
\AffixLabels{\centerline{\epsfig{file =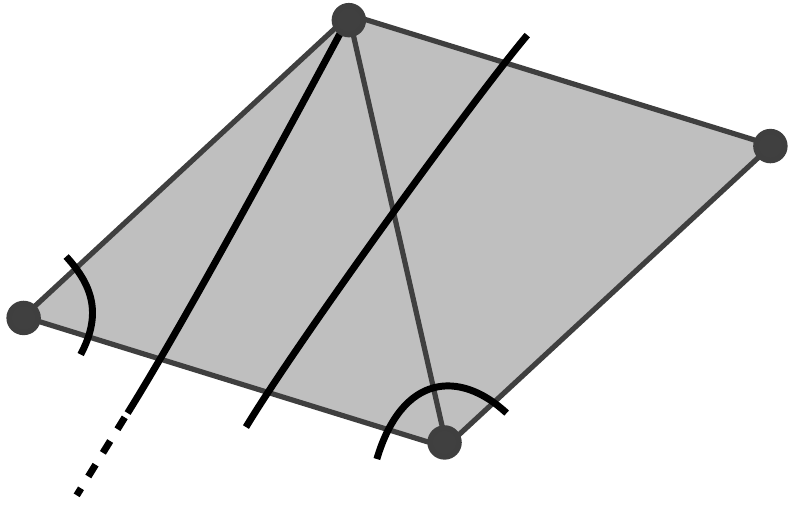,width=5.0cm,angle=0} }}
\vspace{-30pt}
\end{center}
\caption{If $h$ is the first arc of $T$ crossed by $a$, it cannot be maximal} \label{fig:Hmax1}
\end{figure}
We now prove that if $h$ is the first (or the last) edge crossed by an arc in $A$ (as in Figure \ref{fig:figura1}), then flipping $h$ is convenient. Let $h'$ be the arc obtained from flipping $h$. Let us now compute $i(h', A) - i(h, A) $. 

Up to possible symmetries, we may assume that $A$ contains at least an arc that crosses $z$ and $h$ and terminates in the top vertex of $Q$. Up to isotopy, only three configurations of $A$ and $Q$ are possible, and these are described in Figure \ref{fig:figura1}. We will use the following notation: 
\begin{itemize}
\item $\epsilon$ is the number of arcs in $A\cap Q$ that terminate in the top vertex of $Q$, crossing $z$ and $h$. Under our assumption, $\epsilon \geq 1$. 
\item $\epsilon'$ is the number of those that terminate in the top vertex of $Q$, crossing $w$ and $h$; 
\item $\epsilon''$ is the number of those that terminate in the bottom vertex of $Q$, crossing $h$ and $y$;
\item $\alpha $ is the number of arcs in $A\cap Q$ that wrap around the left endpoint of $h$ crossing $z,h$ and $x$; 
\item $\beta$ is the number of arcs in $A \cap Q$ that wrap around the right endpoint of $h$ crossing $w,h$ and $y$; 
\item $\gamma$ is the number of arcs in $A\cap Q$ that wrap around the bottom vertex of $Q$, crossing $z$, $h$ and $w$;
\item $\eta$ is the number of arcs that cross $z$, $h$ and $y$. 
\end{itemize}

\begin{figure}[h]
\leavevmode \SetLabels
\L(.07*.76) $x$\\
\L(.2*.9) $y$\\
\L(.12*.18) $z$\\
\L(.21*.28) $w$\\
\L(.23*.45) $h$\\
\L(.004*.42) $\alpha$\\
\L(.313*.39) $\beta$\\
\L(.18*.05) $\gamma$\\
\L(.262*.81) $\eta$\\
\L(.04*.27) $\epsilon$\\
\L(.404*.76) $x$\\
\L(.534*.9) $y$\\
\L(.456*.18) $z$\\
\L(.544*.28) $w$\\
\L(.58*.45) $h$\\
\L(.338*.42) $\alpha$\\
\L(.647*.39) $\beta$\\
\L(.62*.7) $\epsilon''$\\
\L(.596*.81) $\eta$\\
\L(.374*.27) $\epsilon$\\
\L(.735*.76) $x$\\
\L(.865*.9) $y$\\
\L(.785*.18) $z$\\
\L(.855*.22) $w$\\
\L(.91*.45) $h$\\
\L(.669*.42) $\alpha$\\
\L(.978*.39) $\beta$\\
\L(.18*.05) $\gamma$\\
\L(.915*.22) $\epsilon'$\\
\L(.705*.27) $\epsilon$\\
\endSetLabels
\begin{center}
\AffixLabels{\centerline{\epsfig{file =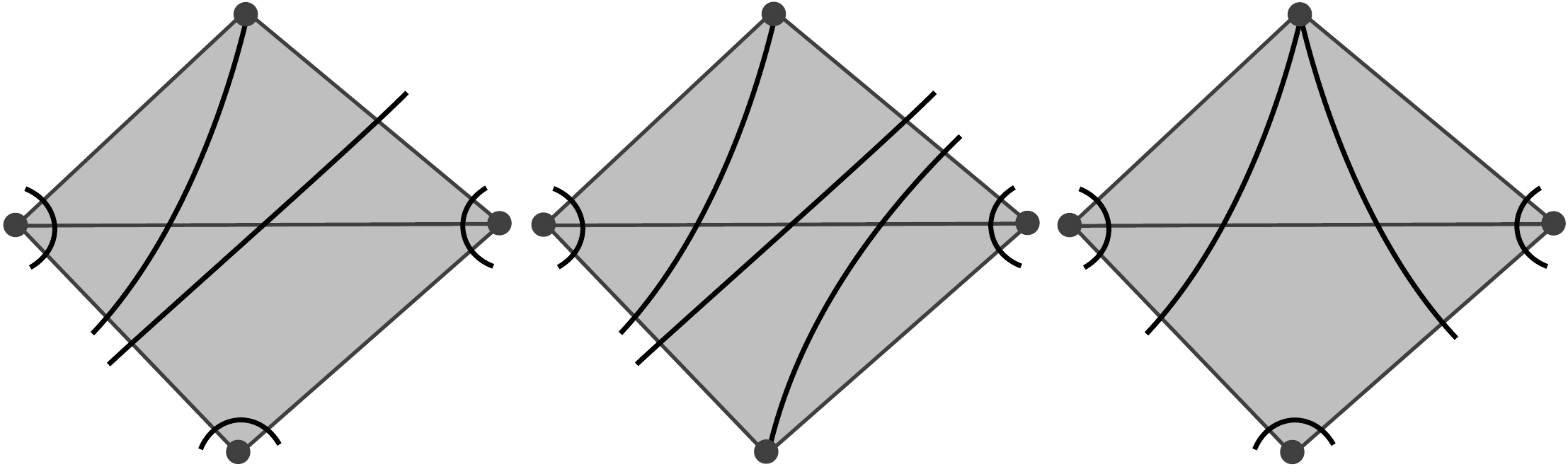,width=15.0cm,angle=0} }}
\vspace{-30pt}
\end{center}
\caption{The three possible configurations of arcs} \label{fig:figura1}
\end{figure}

We remark that in Figure \ref{fig:figura1} - (1) we have $ \epsilon' = \epsilon'' = 0$, in Figure \ref{fig:figura1} - (2) we have $ \gamma =0$, in Figure \ref{fig:figura1} - (3) we have $\eta =0$. Moreover, in each configuration in Figure \ref{fig:figura1} the following holds: 
\begin{align*}
i(h,A) &= \alpha + \beta + \epsilon + \eta + \epsilon' + \epsilon'' \\
i(h', A) &= \gamma + \eta \\
i(z,A) &= \alpha + \eta + \gamma + \epsilon = \alpha + \epsilon + i(h',A)
\end{align*}

By definition of $h$, we have $i(z,A) \leq i(h,A)$. It follows: 
\begin{align*}
\alpha + \epsilon + i(h',A) &\leq i(h,A) \\ 
i(h',A) -i (h,A) &\leq - \alpha - \epsilon \leq -1. 
\end{align*} 
We conclude that flipping $h$ is convenient. 
\end{proof}

\begin{lemma}\label{flip-convenient-2}
Assume $i(T,A) >0$.
Let $h\in T$ be an arc such that $i(h, A) = \max_{t \in T} i(t, A)$. Let $Q$ be the quadrilateral containing $h$ as a diagonal, and 
assume that $Q$ does not contain a terminal triangle for $A$. 
Then flipping $h$ is neutral if and only if, according the notation of Figure \ref{fig:figura2}, $i(h, A) = i(y, A) = i(z,A)$. 

Moreover, if $i(h, A) \neq i(y, A)$ or $i(h, A) \neq i(z,A)$ then flipping $h$ is convenient.
\end{lemma}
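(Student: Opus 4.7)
The plan is to mimic the case analysis of the preceding lemma, now exploiting the absence of terminal triangles in $Q$. Since no arc of $A$ terminates on a vertex of $Q$, every arc of $A$ that enters $Q$ must traverse it, so its restriction to $Q$ decomposes into passages, each entering through one edge of $Q$ and exiting through another. Following the notation of Figure \ref{fig:figura2}, there are six possible passage types: four correspond to arcs connecting two adjacent edges of $Q$ (wraps around each of the four vertices, with counts $\alpha,\beta,\gamma,\delta$), and two correspond to arcs connecting opposite edges (the two diagonal types, with counts $\eta$ and $\mu$). Because arcs of $A$ are pairwise disjoint in their interiors and the two diagonal passage types cross each other transversally inside $Q$, at most one of $\eta,\mu$ can be positive; without loss of generality I would assume $\mu=0$.

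I would then write each of $i(h,A)$, $i(h',A)$, $i(y,A)$, $i(z,A)$ (and the analogous quantities for $x$ and $w$) as explicit linear combinations of $\alpha,\beta,\gamma,\delta,\eta$. The key identity to extract is
\[
i(h,A) - i(h',A) = (\alpha+\beta)-(\gamma+\delta),
\]
while the maximality $i(h,A)\geq i(t,A)$ applied at $t=y$ and $t=z$ gives respectively $\alpha\geq \delta$ and $\beta\geq \gamma$. Together these two inequalities show $i(h,A)\geq i(h',A)$, with equality (neutrality) if and only if both inequalities are equalities, if and only if $\alpha=\delta$ and $\beta=\gamma$, if and only if $i(h,A)=i(y,A)=i(z,A)$. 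The ``moreover'' statement is then simply the contrapositive: if either $\alpha>\delta$ or $\beta>\gamma$, the sum $\alpha+\beta>\gamma+\delta$ is strict and flipping $h$ is convenient.

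The main obstacle I foresee is the careful bookkeeping needed to justify the intersection-count identities in full generality. One must verify that the six passage types enumerate all possibilities, correctly handle exterior wraps (passages with the same entry-exit edges but routed outside $Q$, which contribute differently to $i(h,A)$ and $i(h',A)$), and invoke the multiarc disjointness of $A$ to rule out the simultaneous presence of both diagonal passage types. Once this accounting is complete, the iff statement follows immediately from the two maximality inequalities combined with the formula for $i(h,A)-i(h',A)$, much as in the proof of the preceding lemma.
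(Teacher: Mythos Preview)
Your proposal is correct and follows essentially the same route as the paper: enumerate the passage types of $A$ through $Q$, write $i(h,A)$, $i(h',A)$, $i(y,A)$, $i(z,A)$ as linear combinations of the counts $\alpha,\beta,\gamma,\delta,\eta$, obtain $i(h,A)-i(h',A)=(\alpha+\beta)-(\gamma+\delta)$, and then use maximality at $y$ and $z$ to get $\delta\leq\alpha$ and $\gamma\leq\beta$. Your treatment of the second diagonal type $\mu$ is slightly more explicit than the paper's (which absorbs it into an ``up to isotopy'' statement and a picture), but your worry about ``exterior wraps'' is unnecessary: every crossing of a side of $Q$ by an arc of $A$ is an entry or exit of $Q$, so the six passage types already account for all contributions to the relevant intersection numbers.
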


\begin{proof}
We will compute $i(h',A) - i(h,A)$. Since $Q$ is not terminal for $A$, $A$ and $Q$ look like in Figure \ref{fig:figura2}, up to isotopy.

\begin{figure}[h]
\leavevmode \SetLabels
\L(.40*.76) $x$\\
\L(.54*.87) $y$\\
\L(.45*.18) $z$\\
\L(.545*.28) $w$\\
\L(.55*.45) $h$\\
\L(.335*.415) $\alpha$\\
\L(.65*.39) $\beta$\\
\L(.52*.05) $\gamma$\\
\L(.598*.805) $\eta$\\
\L(.448*.92) $\delta$\\
\endSetLabels
\begin{center}
\AffixLabels{\centerline{\epsfig{file =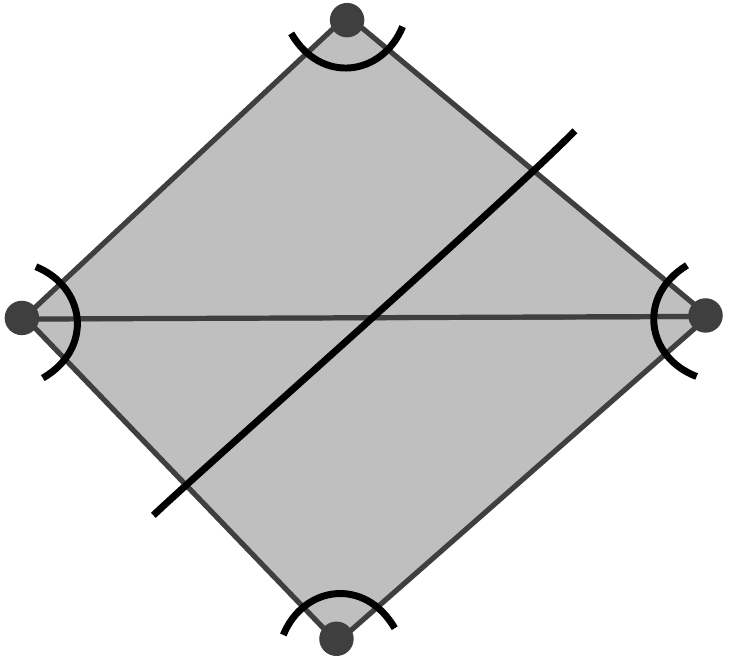,width=5.0cm,angle=0} }}
\vspace{-30pt}
\end{center}
\caption{Configuration of arcs} \label{fig:figura2}
\end{figure}

Denote by $\delta$ the number of arcs in $A\cap Q$ that wrap around the top corner of $Q$. In the notation of the proof of Lemma \ref{flip-convenient-1} we have: 
\begin{align*}
 i(h,A) &= \alpha + \eta + \beta \\ 
i(z,A) &= \alpha + \eta + \gamma \\
i(y,A) &= \eta + \beta + \delta \\
i(h',A) &= \gamma +\delta + \eta \\
i(h',A) - i(h,A) &= (\gamma - \beta) + (\delta - \alpha).
\end{align*}
By hypothesis on $h$, we have $i(z,A) \leq i(h,A)$, so $\gamma - \beta \leq 0$. Similarly, $i(y,A) \leq i(h,A)$, so $\delta - \alpha \leq 0$. It follows that 
$i(h', A) - i(h,A) = 0$ if and only if $\gamma = \beta$ and $\delta = \alpha$, that is, if and only if $i(z,A) = i(h,A)$ and $i(y,A) = i(h,A)$. We remark that in all the other cases $i(h',A) - i(h,A) <0$, and flipping $h$ is convenient.
\end{proof}

\begin{lemma}\label{flip-convenient-lemma}
If $i(T,A) > 0$, then there exists an arc $h \in T$ such that $i(h,A) = \max_{t \in T} i(t,A)$ and flipping $h$ is convenient. 
\end{lemma}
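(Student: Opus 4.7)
Our approach is by contradiction. Set $N := \max_{t \in T} i(t,A)$ and $M := \{t \in T : i(t,A) = N\}$, and assume no $h \in M$ admits a convenient flip. Combining Lemmas \ref{flip-convenient-1} and \ref{flip-convenient-2}, every such $h$ must satisfy: (i) the quadrilateral $Q_h$ contains no terminal triangle for $A$, and (ii) the two ``diagonal'' sides $y_h, z_h$ of $Q_h$ (labeled as in Figure~\ref{fig:figura2}, with the convention that the $\eta$-type transverse sub-arcs of $A \cap Q_h$ run between $y_h$ and $z_h$) both satisfy $i(\cdot, A) = N$, i.e.\ $y_h, z_h \in M$.

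Since $M$ is nonempty and $N \geq 1$, pick $a \in A$ with $i(a, h_0) > 0$ for some $h_0 \in M$, and list the crossings of $a$ with $T$ in order as $t_1, \ldots, t_k$. The starting terminal triangle $\Delta_0$ of $a$ lies inside $Q_{t_1}$, so (i) forces $t_1 \notin M$; symmetrically $t_k \notin M$.

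For each $j$ with $t_j \in M$ (necessarily $1 < j < k$), the portion of $a$ inside $Q_{t_j}$ crosses $t_j$ transversally, so this sub-arc has type $\alpha$, $\beta$, or $\eta$ in the notation of Lemma~\ref{flip-convenient-2}. The fourth possible transverse type (going from $x_{t_j}$ to $w_{t_j}$) is excluded by our labeling convention, since its coexistence with an $\eta$-sub-arc would force two distinct arcs of $A$ to meet inside $Q_{t_j}$ (or the simple arc $a$ to self-intersect). In the three remaining cases the pair $\{t_{j-1}, t_{j+1}\}$ equals $\{x_{t_j}, z_{t_j}\}$, $\{y_{t_j}, w_{t_j}\}$ or $\{y_{t_j}, z_{t_j}\}$ respectively, and each of these meets $\{y_{t_j}, z_{t_j}\} \subset M$ nontrivially. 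In particular, whenever $t_{j-1} \notin M$, the $\eta$ case is ruled out, so $t_{j-1} \in \{x_{t_j}, w_{t_j}\}$ and $t_{j+1} \in \{y_{t_j}, z_{t_j}\} \subset M$.

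Starting at $j_* := \min\{j : t_j \in M\}$ (so $j_* \geq 2$ and $t_{j_* - 1} \notin M$), the above gives $t_{j_* + 1} \in M$. The heart of the argument is to iterate this forcing along $a$'s crossing sequence, at each step tracking whether the previous or next crossing of $a$ falls into the ``$y/z$'' slot versus the ``$x/w$'' slot of the next quadrilateral, and using the sub-arc type restrictions to show membership in $M$ propagates one step forward. The intended contradiction is that the chain of maximal crossings reaches the final index $k$, forcing $t_k \in M$ against $t_k \notin M$. The main obstacle is precisely this propagation step: condition (ii) only provides a nonempty intersection with $\{y, z\}$, so one must carry out a careful combinatorial case analysis along the chain, keeping the labeling of each $Q_{t_j}$ consistent with the $\eta$-diagonal convention established at the start.
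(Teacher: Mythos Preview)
Your argument is not complete, and you say so yourself: the propagation step---showing that once $a$ crosses an edge in $M$, all later crossings stay in $M$---is flagged as ``the main obstacle'' but never carried out. This is not a bookkeeping detail that can be filled in routinely; the propagation can genuinely fail. Suppose at some index $j>j_*$ you have $t_{j-1},t_j\in M$ and the sub-arc of $a$ in $Q_{t_j}$ is of type $\beta$, so that $\{t_{j-1},t_{j+1}\}=\{y_{t_j},w_{t_j}\}$. Nothing forbids $t_{j-1}=y_{t_j}\in M$ and $t_{j+1}=w_{t_j}$. From the neutrality conditions $\gamma=\beta$, $\delta=\alpha$ of Lemma~\ref{flip-convenient-2} one gets $i(w_{t_j},A)=2\beta$ while $N=\alpha+\eta+\beta$; whenever $\beta<\alpha+\eta$ (perfectly compatible with all the maximality constraints, e.g.\ $\alpha=\beta=\eta=1$) this gives $w_{t_j}\notin M$ and your chain breaks. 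So the ``careful combinatorial case analysis'' you defer is not merely unwritten---it is not available in the form you intend, because following the crossing sequence of a single arc $a$ rigidly forces you through whichever side $a$ exits, maximal or not.

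The paper's proof sidesteps this by not following a fixed arc of $A$ at all. It runs a search directly inside $M$: start from any maximal arc $m$; if its flip is convenient, stop; otherwise Lemmas~\ref{flip-convenient-1} and~\ref{flip-convenient-2} produce two further maximal arcs $y,z$, one in each triangle of $Q_m$, and one replaces $m$ by either of them and repeats. The freedom to \emph{choose} between $y$ and $z$ at each step is exactly what your approach lacks: you are bound to whichever side $a$ happens to exit through, whereas the paper's walk can always step to the guaranteed $M$-side of the next triangle. Termination is then attributed to Lemma~\ref{flip-convenient-1}: since arcs of $A$ terminate, terminal triangles exist, and any maximal edge whose quadrilateral contains one has a convenient flip.
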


\begin{proof}
We will describe a procedure to find $h$. First pick an arc $m$ such that $i(m,A) = \max_{t \in T} i(t,A)$ . By Lemma \ref{flip-convenient-0} $m$ is flippable. If flipping $m$ is convenient, we set $h= m$ and we are done. Otherwise, by Lemma \ref{flip-convenient-1} the quadrilateral containing $m$ as a diagonal contains two more edges, $y$ and $z$, such that $i(y, A) = i(z, A) = i(m, A) = \max_{t \in T} i(t,A)$. Now set $m=y$ or $m=z$, and repeat this procedure. 
Lemma \ref{flip-convenient-1} ensures that the algorithm terminates. Indeed, if the quadrilateral containing $m$ as a diagonal also contains a terminal triangle for $A$ then flipping $m$ is convenient. 
\end{proof}

This now allows to describe a path from $T$ to $\F_A$. 
\begin{theorem}\label{our-algo}
Let $T$ be a triangulation and $A$ a multiarc. There is a sequence of triangulations $T = T_0 \rightarrow \ldots \rightarrow T_{i+1} \rightarrow \ldots$ iteratively constructed as follows:
\begin{enumerate} 
\item If $i(T_i, A) >0$, choose $h_i \in T_i$ as in Lemma \ref{flip-convenient-lemma}. Denote by $T_{i+1}$ the triangulation obtained flipping $h_i$.
\item If $i(T_i, A) =0$, terminate.
\end{enumerate}
Any such sequence constructed this way has at most $i(T,A)$ elements, and if $T_n$ is the terminal triangulation then $T_n \in \F_A$. 

Moreover, the above sequence satisfies $\max_{t \in T_{i+1}} i(t, A) \leq \max_{t \in T_{i}} i(t, A)$ for every $i = 0, \ldots, n$. 
\end{theorem}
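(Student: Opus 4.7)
The proof is essentially a bookkeeping argument on top of Lemma \ref{flip-convenient-lemma}, so the plan is to unpack each of the three assertions (termination in at most $i(T,A)$ steps, the terminal triangulation lying in $\F_A$, and the monotonicity of $\max_{t\in T_i} i(t,A)$) by a direct application of that lemma.

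First I would argue that the algorithm is well defined at every non-terminal step. As long as $i(T_i,A)>0$, Lemma \ref{flip-convenient-lemma} guarantees the existence of an arc $h_i\in T_i$ with $i(h_i,A)=\max_{t\in T_i} i(t,A)$ for which the flip is convenient, so that $i(T_{i+1},A)<i(T_i,A)$. Since the integer-valued quantity $i(T_i,A)$ strictly decreases at every step and is bounded below by $0$, the procedure terminates after at most $i(T,A)$ iterations.

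Next I would deduce $T_n\in\F_A$. By the termination condition we have $i(T_n,A)=0$, hence every arc of $A$ is disjoint (in minimal position) from every arc of the triangulation $T_n$. Since a triangulation is a \emph{maximal} multiarc, any arc disjoint from $T_n$ must already be one of its arcs, so $A\subseteq T_n$, which is exactly the definition of $T_n\in\F_A$.

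Finally, for the monotonicity statement, I would note that the transition from $T_i$ to $T_{i+1}$ replaces the single arc $h_i$ by the flipped arc $h_i'$ and leaves every other arc of $T_i$ untouched. Because flipping $h_i$ was convenient we have
\[
i(h_i',A)<i(h_i,A)=\max_{t\in T_i} i(t,A),
\]
while $i(t,A)$ is unchanged for all $t\in T_i\cap T_{i+1}$. Taking the maximum over $T_{i+1}$ therefore yields $\max_{t\in T_{i+1}} i(t,A)\le\max_{t\in T_i} i(t,A)$, as required.

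The only non-routine content has already been packaged into Lemma \ref{flip-convenient-lemma}, so I do not foresee a genuine obstacle; the one cosmetic point to be careful about is confirming that $i(T_n,A)=0$ forces $A\subseteq T_n$, which relies on maximality of triangulations and the fact that intersection numbers are computed in minimal position.
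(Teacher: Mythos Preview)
Your argument is correct and follows the same route as the paper: apply Lemma~\ref{flip-convenient-lemma} to get a strict drop in $i(T_i,A)$ at each step, hence termination in at most $i(T,A)$ steps with $i(T_n,A)=0$, and observe that replacing $h_i$ by $h_i'$ with $i(h_i',A)<i(h_i,A)=\max_{t\in T_i}i(t,A)$ keeps the maximum from increasing. In fact your write-up is more explicit than the paper's, which records only the inequality $i(T_{i+1},A)\le i(T_i,A)-1$ and leaves both the maximality argument for $A\subseteq T_n$ and the monotonicity of the maximum implicit.
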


\begin{proof}
By Lemma \ref{flip-convenient-lemma} flipping $h_i$ is convenient, so $$i(T_{i+1}, A) \leq i(T_i, A) - 1.$$ After at most $i(T,A)$ steps, we have $i(T_n, A) = 0$, that is, $T_n \in \F_A$. 
\end{proof}

From this the following corollary is immediate.
\begin{corollary}\label{dist-up}
For every triangulation $T \in \F(\Sigma)$ and for every multiarc $A$, we have $$d(T, \F_A) \leq i(T,A).$$ 
\end{corollary}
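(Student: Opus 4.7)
The plan is to read off the bound directly from Theorem~\ref{our-algo}. Starting from the given triangulation $T$ and multiarc $A$, that theorem produces a sequence of triangulations $T = T_0 \to T_1 \to \cdots \to T_n$ in which each arrow represents the flip of the arc $h_i$ chosen via Lemma~\ref{flip-convenient-lemma}, and the iteration halts precisely when $i(T_n, A) = 0$, i.e.\ when $T_n \in \F_A$. Since each flip is convenient, the intersection number with $A$ strictly decreases at each step, so $n \leq i(T, A)$. The consecutive triangulations $T_i$ and $T_{i+1}$ differ by a single flip and hence are adjacent in $\F(\Sigma)$, so the sequence is a path of length $n$ in the flip graph joining $T$ to a vertex of $\F_A$. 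Therefore $d(T, \F_A) \leq n \leq i(T, A)$, which is the desired inequality.

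There is no real obstacle here: all the work has been packaged into Theorem~\ref{our-algo}, and the corollary is simply the observation that the existence of such a terminating sequence immediately yields a path realizing the claimed bound. If one wanted to be completely explicit one could also note that taking the infimum over the choices of convenient arc in the construction does not affect the argument, since any single valid run of the algorithm already produces a witnessing path of length at most $i(T, A)$.
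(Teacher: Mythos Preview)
Your proof is correct and matches the paper's approach exactly: the paper simply states that the corollary is immediate from Theorem~\ref{our-algo}, and your argument spells out precisely why, namely that the algorithm yields a flip path of length $n \leq i(T,A)$ from $T$ to a point of $\F_A$.
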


We remark that if $S$ is a triangulation, $\F_S=S$ and the path described in Theorem \ref{our-algo} is a path joining $T$ and $S$. As such we also have the following corollary.

\begin{corollary}
The flip graph $\F(\Sigma)$ is connected, and for any two triangulations $T, S \in \F(\Sigma)$ we have $d(T,S) \leq i(T,S).$
\end{corollary}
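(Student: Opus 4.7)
The plan is to apply the previous Corollary \ref{dist-up} (equivalently, Theorem \ref{our-algo}) with the special choice of multiarc $A = S$. The key observation, already remarked just before the statement, is that when $S$ is itself a triangulation (i.e.\ a maximal multiarc), the stratum $\F_S$ consists of all triangulations containing $S$, and by maximality this can only be $S$ itself. Hence $\F_S = \{S\}$, and $d(T, \F_S) = d(T, S)$.

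With this identification, Theorem \ref{our-algo} gives an explicit sequence
$$T = T_0 \to T_1 \to \cdots \to T_n = S$$
of at most $n \leq i(T,S)$ flips, where the final triangulation $T_n$ must equal $S$ since it lies in $\F_S = \{S\}$. This directly yields the distance bound $d(T,S) \leq i(T,S)$.

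For connectedness, the same construction applied to any pair of triangulations $T, S \in \F(\Sigma)$ produces a finite path of flips joining them (namely the sequence above, of length bounded by the finite intersection number $i(T,S)$). Since $T$ and $S$ were arbitrary, $\F(\Sigma)$ is connected as a graph.

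I do not anticipate any obstacle here, since all the substantive work is contained in Theorem \ref{our-algo}: the convenience lemmas (Lemmas \ref{flip-convenient-0}, \ref{flip-convenient-1}, \ref{flip-convenient-2}, \ref{flip-convenient-lemma}) already guarantee that each flip strictly decreases the intersection number with the target multiarc, and the identification $\F_S = \{S\}$ for a triangulation $S$ is immediate from maximality of triangulations among multiarcs.
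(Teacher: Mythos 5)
Your proof is correct and follows exactly the paper's intended argument: the paper itself notes just before the corollary that when $S$ is a triangulation, $\F_S = S$, so the path from Theorem \ref{our-algo} joins $T$ to $S$ in at most $i(T,S)$ flips, giving both connectedness and the distance bound.
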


Our construction also enjoys the following properties.

\begin{corollary}\label{corollary}
The path from $T$ to $\F_A$ described in Theorem \ref{our-algo} has the following properties: 
\begin{enumerate}
\item If there exists $a \in A$ such that $a \in T_i$ then $a \in T_j$ for all $j \geq i$; 
\item If $t \in T_i$ is such that $i(t, A) = 0$ then $t \in T_j$ for all $j \geq i$. 
\end{enumerate}
\end{corollary}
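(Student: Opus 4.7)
The plan is to deduce both statements from a single observation about the arc $h_i$ that is flipped at step $i$: by construction of the algorithm in Theorem \ref{our-algo}, $h_i$ satisfies $i(h_i,A) = \max_{t\in T_i}i(t,A)$ and is only selected when $i(T_i,A)>0$. In particular $i(h_i,A)\geq 1$, so any arc of $T_i$ whose intersection with $A$ is zero cannot be the flipped arc.

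I would prove statement (2) first, since (1) will follow as a special case. So assume $t \in T_i$ with $i(t,A)=0$. The claim $t\in T_j$ for $j\geq i$ is proved by induction on $j-i$, the base case being trivial. For the inductive step, assume $t\in T_{j-1}$ with $j-1 \geq i$ and $T_j$ defined. Since $T_j$ is obtained by a flip, the algorithm has not terminated at step $j-1$, so $i(T_{j-1},A)>0$, and thus $i(h_{j-1},A)\geq 1$. Because $i(t,A)=0\neq i(h_{j-1},A)$, we have $t\neq h_{j-1}$; since flipping only removes $h_{j-1}$ from the triangulation, $t\in T_j$.

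For (1), I would observe that if $a\in A$ then $i(a,A)=\sum_{b\in A}i(a,b)=0$, because the arcs of a multiarc are pairwise disjoint (and $i(a,a)=0$ for a simple arc). Therefore any $a\in A$ lying in $T_i$ automatically satisfies the hypothesis of (2), and the conclusion follows.

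The argument is essentially just bookkeeping once the key observation about $h_i$ having positive intersection with $A$ is in place; there is no real obstacle, and no further use of the more delicate Lemmas \ref{flip-convenient-1} and \ref{flip-convenient-2} is needed beyond what is already packaged into Theorem \ref{our-algo}.
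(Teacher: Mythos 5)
Your proof is correct, and since the paper leaves this corollary without an explicit proof (treating it as immediate from the construction), your argument spells out exactly the intended reasoning. The key observation — that the flipped arc $h_j$ always satisfies $i(h_j,A) = \max_{t\in T_j} i(t,A) \geq 1$ whenever a flip is performed, so any arc with zero intersection against $A$ is never the one removed — is the whole content, and your reduction of statement (1) to statement (2) via $i(a,A)=0$ for $a\in A$ is a clean way to avoid repeating the induction.
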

We will see later that all the geodesic paths between a triangulation and the stratum of a multiarc also have these properties. We use this corollary to deduce the following. 

\begin{corollary}\label{strata-conn}
For every multiarc $A$, the stratum $\F_A$ is arcwise connected. 
\end{corollary}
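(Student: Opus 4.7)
The plan is to use Theorem \ref{our-algo} together with part (1) of Corollary \ref{corollary}, but applied with a maximal (target) multiarc rather than the multiarc $A$ defining the stratum. Given $S, T \in \F_A$, I would feed the algorithm the triangulation $T$ and the target multiarc $S$. Since $S$ is itself a triangulation (a maximal multiarc), the stratum $\F_S$ consists of the single point $\{S\}$, so the algorithm produces a finite sequence of flips
\[
T = T_0 \to T_1 \to \cdots \to T_n = S,
\]
which, forgetting for the moment that we want to stay inside $\F_A$, is already an edge-path in $\F(\Sigma)$ from $T$ to $S$.

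The main step is then to verify that this path in fact lies entirely inside $\F_A$. For this, I would invoke Corollary \ref{corollary}(1), applied with target multiarc $S$: it says that any arc of $S$ that belongs to some $T_i$ belongs to all subsequent $T_j$. Now pick any $a \in A$. Because $S \in \F_A$ we have $a \in S$, and because $T \in \F_A$ we have $a \in T = T_0$. The corollary then gives $a \in T_j$ for every $j \geq 0$. Ranging over all $a \in A$ yields $A \subseteq T_j$ for every $j$, i.e.\ $T_j \in \F_A$ for every $j$. Hence the path $T_0, T_1, \dots, T_n$ realizes an edge-path from $T$ to $S$ inside $\F_A$.

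There is no real obstacle beyond bookkeeping: the work has been done in Theorem \ref{our-algo} and Corollary \ref{corollary}. The only conceptual point to keep in mind is the role reversal of the multiarcs — the \emph{defining} multiarc of the stratum is $A$, while the \emph{target} of the algorithm is $S$ — and that this is legitimate precisely because $A \subseteq S$, which is exactly the hypothesis $S \in \F_A$. Since arbitrary $S, T \in \F_A$ can be connected by an edge-path in $\F_A$, the stratum is arcwise connected.
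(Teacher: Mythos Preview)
Your proposal is correct and follows essentially the same approach as the paper: run the algorithm of Theorem~\ref{our-algo} from $T$ toward the single-point stratum $\F_S$, and then use Corollary~\ref{corollary} to see that every arc of $A$ persists along the resulting path. The only cosmetic difference is that you invoke part~(1) of Corollary~\ref{corollary} (using $a \in S$), whereas the paper's phrasing emphasizes the hypothesis $i(a,S)=0$, which fits part~(2); both yield the same conclusion.
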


\begin{proof}
Let $S,T \in \F_A$ be two triangulations, and consider the path from $T$ to $\F_S = S$ described in Theorem \ref{our-algo}. We remark that for all $a \in A$ we have $a \in T$ and $i(a, S) = 0$. By Corollary \ref{corollary}, for all $a \in A$ we have $a \in T_i$, so $T_i \in \F_A$ for all $i = 0, \ldots, n$. We conclude that the path described is contained in $\F_A$.
\end{proof}

\subsubsection{A lower bound on distances}	
As a complement to our upper bound on distance in terms of intersection number, we now show how intersection also provides a lower bound on distance. We begin with the following observation. 

\begin{lemma}\label{low:lemma-1}
Let $T$ and $T'$ be two triangulations in $\Sigma$ that differ by one flip, and let $A$ be a multiarc with $|A|$ components. 
Then we have: 
$$ i(T', A) \geq 2 \cdot \max_{t \in T} i(t, A) - 2 \cdot |A| .$$ 
\end{lemma}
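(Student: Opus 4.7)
The approach is to reduce to a local intersection estimate on the quadrilateral surrounding any arc in a triangulation and apply it in two cases. More precisely, my plan is to first establish the following \emph{local estimate}: for any triangulation $U$ and arc $g\in U$, if $x,y,z,w$ are the four sides (in cyclic order) of the quadrilateral $Q_g$ formed by the two triangles of $U$ adjacent to $g$, then
$$i(x,A)+i(y,A)+i(z,A)+i(w,A) \;\geq\; 2\,i(g,A)-2|A|.$$
Granting this, the lemma will follow by choosing an arc $h\in T$ with $i(h,A)=M:=\max_{t\in T} i(t,A)$ and applying the local estimate to an appropriately chosen arc in either $T$ or $T'$.

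To prove the local estimate, I would put each $a\in A$ in minimal position with $U$ and analyze its visits to the two triangles $Q_1,Q_2$ of $U$ adjacent to $g$, with respective sides $\{x,y,g\}$ and $\{z,w,g\}$. A \emph{visit} of $a$ to $Q_i$ is a maximal connected component of $a\cap Q_i$; each visit has two ``ends'', each being either a point where $a$ meets a side of $Q_i$ or an endpoint of $a$ at a vertex of $Q_i$. Counting ends by side gives
$$2\,V_a^{(1)} \;=\; i(a,x)+i(a,y)+i(a,g)+\epsilon_a^{(1)},$$
where $V_a^{(1)}$ is the number of visits of $a$ to $Q_1$ and $\epsilon_a^{(1)}$ counts the endpoints of $a$ lying in $Q_1$; the analogous identity holds for $Q_2$. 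The crucial observation is that in minimal position no visit to $Q_i$ can have both ends lying on $g$: an innermost such visit would, together with a sub-arc of $g$, bound a bigon inside the disk $Q_i$, contradicting minimal position. Therefore each visit has at most one end on $g$, so $V_a^{(i)}\geq i(a,g)$. Rearranging in $Q_1$ yields $i(a,x)+i(a,y)\geq i(a,g)-\epsilon_a^{(1)}$, and symmetrically for $Q_2$; adding these and using $\epsilon_a^{(1)}+\epsilon_a^{(2)}\leq 2$ (since $a$ has only two endpoints) gives the per-arc bound $i(a,x)+i(a,y)+i(a,z)+i(a,w) \geq 2\,i(a,g)-2$. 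Summing over $a\in A$ yields the local estimate.

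The lemma then splits into two cases. If $h\neq e$ then $h\in T'$, and I apply the local estimate with $(U,g)=(T',h)$: the four sides of the quadrilateral of $h$ in $T'$ all lie in $T'$, so $i(T',A) \geq 2M-2|A|$. If $h=e$, I apply the local estimate with $(U,g)=(T,e)$: flipping $e$ leaves the four sides $x,y,z,w$ of $Q_e$ unchanged, so they still lie in $T'$, giving $i(T',A)\geq 2\,i(e,A)-2|A|=2M-2|A|$.

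The main obstacle I expect is making the visit count and the endpoint correction rigorous, and dealing with possible degenerate configurations (e.g.\ when sides of $Q_g$ are identified in $\Sigma$, or when $g$ is unflippable so that $Q_g$ is not embedded). The bigon argument forbidding a visit with both ends on $g$ is the central mechanism and remains valid in those degenerate cases, so I expect the plan to go through cleanly.
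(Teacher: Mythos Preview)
Your argument is correct and rests on the same local inequality as the paper's: for the quadrilateral $Q_g$ around an arc $g$, the four sides satisfy $\sum i(\text{side},A)\ge 2\,i(g,A)-2|A|$. You obtain this via a clean visit-counting and bigon argument; the paper obtains it by unpacking the explicit strand decomposition $(\alpha,\beta,\gamma,\delta,\eta,\epsilon,\epsilon',\epsilon'')$ set up in Lemmas~\ref{flip-convenient-1}--\ref{flip-convenient-2}, the bound $\epsilon+\epsilon'+\epsilon''\le 2|A|$ playing exactly the role of your $\sum_a(\epsilon_a^{(1)}+\epsilon_a^{(2)})\le 2|A|$.

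Where the two proofs differ is in the reduction step. You split into the cases $h=e$ and $h\neq e$ and in each case exhibit four side-arcs lying in $T'$. The paper instead uses a single chain
\[
i(T',A)=i(T,A)-i(e,A)+i(e',A)\ \ge\ i(T,A)-i(h,A)\ \ge\ \bigl(i(x,A)+i(y,A)+i(z,A)+i(w,A)+i(h,A)\bigr)-i(h,A),
\]
working entirely with the quadrilateral of $h$ inside $T$, where maximality of $h$ already guarantees flippability (Lemma~\ref{flip-convenient-0}). This buys the paper a shorter line and sidesteps the question of whether $h$ remains flippable in $T'$, the degenerate case you flag; conversely, your visit-counting is self-contained and does not lean on the strand notation from the preceding lemmas. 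Both routes share the same residual subtlety when sides of $Q_h$ are identified in $\Sigma$.
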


\begin{proof}
Assume that $T$ and $T'$ differ by a flip on $t$, we set $T' = T \setminus \{t\} \cup \{t'\}$. Let $h\in T$ be an arc such that $i(h, A) = \max_{t\in T} i(t,A)$. 
We have: 
\begin{equation}\label{eq:2}
\begin{split} 
i(T',A) &= i(T,A) - i(t,A) + i(t',A) \geq i(T,A) - i(t,A) \\ 
&\geq i(T,A) - i(h,A) 
\end{split}
\end{equation}

By Lemma \ref{flip-convenient-0} the arc $h$ is flippable. In the notation of Lemma \ref{flip-convenient-1} , we have 
\begin{align}\label{eq:3}
\begin{split}
i(h, A) &= \alpha + \eta + \beta + \delta + \epsilon + \epsilon' + \epsilon'' \\ 
i(x, A) & = \alpha + \delta \\ 
i(y, A) &= \eta + \beta + \delta + \epsilon'' \\ 
i(w, A) & = \beta + \epsilon' \\ 
i(z, A) & = \alpha + \eta + \epsilon
\end{split}
\end{align}

Remark that $\epsilon + \epsilon' + \epsilon''' $ is the total number of arcs that terminates in the quadrilateral containing $h$, therefore $\epsilon + \epsilon' + \epsilon''' \leq 2|A|$. Combining Equations \ref{eq:3}, we have: 
\begin{equation}\label{eq:4}
\begin{split}
i(T', A) & \geq i(T, A) - i(h,A) \\
&\geq (i(x,A) + i(y,A) + i(w,A) + i(z,A) + i(h,A) ) - i(h,A) \\ 
& = (\alpha + \delta) + (\eta + \beta + \delta + \epsilon'') + (\beta + \epsilon' ) + (\alpha + \eta + \epsilon) \\
& = 	2 i(h, A) - (\epsilon + \epsilon' + \epsilon'') \\ 
& \geq 2 i(h,A) - 2 \cdot |A| 
\end{split}
\end{equation}
\end{proof}

\begin{theorem}
If $T$ is a triangulation and $A$ is a multiarc such that $\max_{t \in T} i(t, A) \geq 2|A|$, then 
$$d(T, \F_A) \geq \left \lfloor \frac{\log(i(T,A)) - \log( 2|A| - 1 )}{\log(\kappa)} \right \rfloor - 2.$$
\end{theorem}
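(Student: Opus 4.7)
The plan is to fix a shortest path from $T$ to $\F_A$ and use Lemma \ref{low:lemma-1} to show that the intersection number with $A$ can drop by at most a multiplicative factor of $\kappa$ per flip as long as the maximum per-arc intersection $\max_{t\in T_i} i(t,A)$ is still at least $2|A|$. Let $T=T_0,T_1,\ldots,T_n\in\F_A$ be a geodesic, so that $n=d(T,\F_A)$, and write $m_i:=i(T_i,A)$ and $M_i:=\max_{t\in T_i}i(t,A)$. Two bounds relate these quantities: since a triangulation has exactly $\kappa$ arcs, I have $m_i\leq \kappa M_i$, while Lemma \ref{low:lemma-1} applied to the flip $T_i\to T_{i+1}$ gives $m_{i+1}\geq 2M_i-2|A|$.

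The main observation is that as long as $M_i\geq 2|A|$, the inequality $2M_i-2|A|\geq M_i$ holds, and combined with $M_i\geq m_i/\kappa$ this yields
$$m_{i+1}\;\geq\; M_i\;\geq\; \frac{m_i}{\kappa}.$$
So as long as the maximum stays above the threshold $2|A|$, the total intersection can be divided by at most $\kappa$ per flip. I would then define
$$k\;:=\;\min\bigl\{\,i\in\{0,\ldots,n\}\ :\ M_i\leq 2|A|-1\,\bigr\}.$$
The hypothesis $\max_{t\in T}i(t,A)\geq 2|A|$ forces $k\geq 1$, and the fact that $T_n$ contains $A$ gives $m_n=0$, hence $M_n=0<2|A|$, so such a $k$ exists and satisfies $k\leq n$.

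Iterating $m_{i+1}\geq m_i/\kappa$ for $i=0,\ldots,k-1$ yields the lower bound $m_k\geq i(T,A)/\kappa^k$. By the very definition of $k$ one also has the upper bound $m_k\leq\kappa M_k\leq\kappa(2|A|-1)$. Combining the two gives
$$\kappa^{k+1}\;\geq\;\frac{i(T,A)}{2|A|-1},$$
and because $n\geq k$, taking logarithms and floors produces the stated inequality
$$d(T,\F_A)\;\geq\;\left\lfloor \frac{\log(i(T,A))-\log(2|A|-1)}{\log\kappa}\right\rfloor-2.$$
The essential technical work is already encapsulated in Lemma \ref{low:lemma-1}; the only delicate point in this argument is choosing the threshold index $k$ correctly, so that the hypothesis $M_0\geq 2|A|$ keeps the geometric decay valid for long enough and the complementary estimate $m_k\leq\kappa(2|A|-1)$ can be applied at the first moment it breaks.
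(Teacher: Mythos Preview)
Your proof is correct and follows essentially the same approach as the paper: both fix a geodesic to $\F_A$, use Lemma~\ref{low:lemma-1} together with the pigeonhole bound $M_i\geq m_i/\kappa$ to show $m_{i+1}\geq m_i/\kappa$ while $M_i\geq 2|A|$, and then compare the resulting lower bound on $m_k$ with the trivial upper bound $m_k\leq \kappa(2|A|-1)$ at the first index where the threshold fails. The only difference is cosmetic---your index $k$ is the paper's $m+1$---and your derivation in fact yields $\kappa^{k+1}\geq i(T,A)/(2|A|-1)$, which is marginally sharper than needed for the stated bound.
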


\begin{proof}
Let $h \in T$ be an arc such that $i(h,A) = \max_{t \in T} i(t,A)$. 
We have: 
\begin{equation}\label{eq:1}
i(h, A) \geq \frac{i(T,A)}{\kappa}. 
\end{equation}

Let $T'$ be a triangulation that differs from $T$ by one flip. By Lemma \ref{low:lemma-1} the following holds when $i(h,A) \geq 2 |A|$ : 
\begin{equation}\label{eq:5}
\begin{split}
i(T',A) &\geq 2 i(h,A) - 2 \cdot |A| \\
&\geq i(h,A) \\
& \geq \frac{i(T,A)}{\kappa} .
\end{split}
\end{equation}
We note that the case $i(h,A) \leq 2 |A|$ is not very interesting because in this case $T$ is not too far from $\F_A$. Indeed, by Lemma \ref{dist-up} we have: $d(T,\F_A) \leq 2 \kappa \cdot i(h,A) \leq 2 \kappa \cdot |A|$. 

Let $d = d(T, \F_A) $, and let $T=T_0 \rightarrow \ldots \rightarrow T_d \in \F_A$ be a geodesic path from $T$ to $\F_A$.
Let $m \leq d$ be the smallest integer such that $\max_{t \in T_{m+1}} i(t,A) \leq 2 |A| -1$ and for every $j \leq m$ we have $\max_{t \in T_{j}} i(t, A) \geq 2 |A|$.
We have:
\begin{equation} 
\begin{split}
i(T_{m+1}, A) \leq (2|A| -1) \cdot \kappa.
\end{split}
\end{equation}
By Lemma \ref{eq:5} and the above remark, we also have: 
\begin{equation} 
\begin{split}
 i(T_{m+1}, A) \geq \frac{i(T_{m}, A)}{\kappa} \geq \ldots \geq \frac{i(T_0, A)}{\kappa^{m+1}}
\end{split}
\end{equation}

We have the following inequality that we solve for $m$: 
\begin{equation}
\begin{split}
(2|A| -1) \cdot \kappa &\geq \frac{i(T_0, A)}{\kappa^{m+1}} \\ 
\kappa^{m+2} &\geq \frac{i(T,A)}{2|A| - 1} \\ 
m & \geq \frac{\log i(T,A) - \log(2|A| - 1)}{\log(\kappa)} - 2. 
\end{split}
\end{equation}
We conclude: $$d(T, \F_A) = d \geq m \geq \frac{\log i(T,A) - \log(2|A| - 1)}{\log(\kappa)} -2.$$
\end{proof}

We remark that if $\max_{t \in T} i(t, A) \leq 2|A| - 1$ then by Lemma \ref{dist-up}

$$d(T, \F_A) \leq i(T,A) \leq (2|A|-1)\cdot \kappa.$$

\begin{corollary}
If $T$ and $S$ are two triangulations, then $$d(T,S) \geq \left \lfloor \frac{\log(i(T,S))}{\log(\kappa)} \right \rfloor - 4.$$ 
\end{corollary}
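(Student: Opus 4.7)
The plan is to apply the preceding theorem to the multiarc $A = S$, since a triangulation is itself a (maximal) multiarc. Two simplifications then kick in: $|S| = \kappa$, and the stratum $\F_S$ reduces to the single point $\{S\}$, so $d(T, \F_S) = d(T, S)$. The corollary will then follow from a little bookkeeping on the $\log(2|A|-1)$ term and a separate sanity check in the regime where the theorem's hypothesis fails.

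I would split into two cases. In the main case $\max_{t \in T} i(t, S) \geq 2\kappa$, the theorem directly yields
$$
d(T,S) \geq \left\lfloor \frac{\log(i(T,S)) - \log(2\kappa - 1)}{\log(\kappa)} \right\rfloor - 2.
$$
The key elementary estimate is $2\kappa - 1 \leq \kappa^2$ (which is just $(\kappa-1)^2 \geq 0$), so for $\kappa \geq 2$ one has $\log(2\kappa - 1)/\log(\kappa) \leq 2$. Since subtracting the integer $2$ commutes with the floor, this converts the displayed inequality into
$$
d(T,S) \geq \left\lfloor \frac{\log(i(T,S))}{\log(\kappa)} \right\rfloor - 2 - 2,
$$
which is exactly the desired bound.

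The complementary case $\max_{t \in T} i(t, S) \leq 2\kappa - 1$ is where the hypothesis of the theorem is violated, but here the right-hand side of the corollary is already trivially nonpositive: summing over the $\kappa$ arcs of $T$ gives $i(T,S) \leq \kappa(2\kappa - 1) < 2\kappa^2$, so for $\kappa \geq 2$ one has $\log(i(T,S))/\log(\kappa) < 3$, and therefore $\lfloor \log(i(T,S))/\log(\kappa) \rfloor - 4 \leq -1 \leq d(T,S)$. The degenerate case $\kappa \leq 1$ corresponds to a surface with essentially one triangulation and nothing to prove.

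There is no real obstacle here: the work is entirely in transferring the $|A|$-dependent constants of the previous theorem into surface-level constants using $|S| = \kappa$, and in absorbing the $\log(2\kappa-1)$ term into the additive ``$-4$''. The only mild subtlety is making sure the two cases meet correctly and that the low-intersection regime is handled by the trivial bound $d(T,S) \geq 0$ rather than by the theorem.
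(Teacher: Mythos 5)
Your proposal is correct and is exactly the intended argument: the paper states this corollary without a separate proof precisely because it follows from the preceding theorem by taking $A = S$, using $|S| = \kappa$ and $\F_S = \{S\}$, and absorbing the $\log(2\kappa-1)$ term into the additive constant via $2\kappa - 1 \leq \kappa^2$. Your handling of the low-intersection regime (where the theorem's hypothesis fails and the bound is trivially satisfied because $i(T,S) < 2\kappa^2 \leq \kappa^3$ makes the right-hand side nonpositive) is the right way to close the case split.
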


\subsection{Examples of the relationship between flip graphs and the mapping class group}

In this section, we provide two examples of how one can use the flip graph to study the mapping class group. They are completely independent from the rest of the paper but are provided to illustrate the variety of ways in which the quasi-isometry between the two objects can be used.

\subsubsection{Mapping tori and pseudo-Anosov homeomorphisms}

The following proposition follows from a standard construction in 3-dimensional topology known as the \emph{layered triangulation} of the mapping torus of a pseudo-Anosov homeomorphism. 
\begin{proposition}\label{Agol}
For every pseudo-Anosov $\phi \in \MOD(\Sigma)$ and for every triangulation $T \in \F(\Sigma)$ we have: 
$$d(T, \phi(T)) \geq \frac{ \mathrm{vol}(M_\phi)}{\pi}, $$ where $\mathrm{vol}(M_\phi)$ is the volume of the mapping torus $M_\phi$ of $\phi$.
\end{proposition}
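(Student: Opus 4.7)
The plan is to turn any geodesic in $\F(\Sigma)$ from $T$ to $\phi(T)$ into an ideal triangulation of the mapping torus $M_\phi$ and then apply a volume-per-tetrahedron bound coming from hyperbolic geometry.

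First, set $n = d(T, \phi(T))$ and fix a geodesic $T = T_0, T_1, \ldots, T_n = \phi(T)$ in $\F(\Sigma)$. For each flip $T_i \to T_{i+1}$ I would attach one ideal tetrahedron: the two triangles of $T_i$ forming the flipped quadrilateral serve as its bottom two faces, and the two triangles of $T_{i+1}$ in the same quadrilateral serve as its top two faces. Stacking these $n$ tetrahedra produces an ideal triangulation of $\Sigma\times [0,1]$ carrying $T_0$ on the bottom and $T_n = \phi(T_0)$ on top. Identifying top to bottom along $\phi$ gives an ideal triangulation of $M_\phi$ by exactly $n$ ideal tetrahedra. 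This is the classical layered triangulation of a pseudo-Anosov mapping torus, used for instance by Agol and Lackenby.

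Next, since $\phi$ is pseudo-Anosov, Thurston's hyperbolization theorem for fibered $3$-manifolds endows $M_\phi$ with a complete finite-volume hyperbolic structure (the punctures of $\Sigma$ becoming cusps). Straightening each ideal tetrahedron of the layered triangulation within this hyperbolic structure, the (signed) volumes add up to $\mathrm{vol}(M_\phi)$; possible degenerate or negatively-oriented summands only strengthen the subsequent inequality. Now the classical bound on the volume of an ideal tetrahedron in $\mathbb{H}^3$ says that any such tetrahedron has volume at most $v_3 < \pi$, where $v_3 \approx 1.0149$ is the volume of the regular ideal tetrahedron. Summing over the $n$ tetrahedra gives
$$\mathrm{vol}(M_\phi) \;\leq\; n \cdot v_3 \;<\; n \cdot \pi,$$
which rearranges to $d(T, \phi(T)) = n \geq \mathrm{vol}(M_\phi)/\pi$, as desired.

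The main obstacle is verifying that the layering really does yield a bona fide ideal triangulation of $M_\phi$. One must check that each flip glues a tetrahedron consistently with the surface combinatorics, that unflippable arcs or degenerate quadrilateral configurations along the geodesic cause no trouble, and that marked points on boundary components (if any) are handled by passing to cusps in the appropriate way. All of this is standard in the layered-triangulation literature, but it is the step requiring genuine care; everything else is a direct application of Thurston's hyperbolization and the ideal tetrahedron volume bound.
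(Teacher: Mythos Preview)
Your proposal is correct and follows exactly the same approach as the paper's proof: build the layered triangulation of $M_\phi$ from a geodesic flip path, giving $d(T,\phi(T))$ ideal tetrahedra, and bound the hyperbolic volume by $v_3 < \pi$ per tetrahedron. The paper's proof is in fact much terser than yours---it simply asserts that the number of tetrahedra in the layered triangulation equals $d(T,\phi(T))$ and refers to Agol for details---so your version supplies the hyperbolization, straightening, and volume-bound steps that the paper leaves implicit.
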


\begin{proof}
Consider a geodesic path of flips $T \rightarrow \ldots \rightarrow \phi(T)$. The number of hyperbolic tetrahedra in the layered triangulation of $M_\phi$ associated to this path is equal to $d(T, \phi(T))$. For details on the layered triangulation of a mapping torus, we refer to \cite{Agol}. 
\end{proof}
Our second application is the following.
\begin{corollary}
For every pseudo-Anosov $\phi \in \MOD(\Sigma)$ the cyclic subgroup $\langle \phi \rangle$ is undistorted in $\MOD(\Sigma)$. 
\end{corollary}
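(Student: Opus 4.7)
The plan is to combine Proposition \ref{Agol} with the \v{S}varc-Milnor quasi-isometry (Lemma \ref{mcg-flipgraph}) to show that the word length of $\phi^n$ in $\MOD(\Sigma)$ grows linearly in $n$, which is precisely the statement that $\langle \phi\rangle$ is undistorted.

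The key input is a lower bound on $d(T, \phi^n(T))$ that is linear in $n$. To obtain this, I would apply Proposition \ref{Agol} not to $\phi$ but to each of its iterates $\phi^n$, which are again pseudo-Anosov. By Thurston's hyperbolization theorem, the mapping torus $M_\phi$ is a finite-volume hyperbolic $3$-manifold, and the mapping torus of $\phi^n$ is an $n$-fold cyclic cover of $M_\phi$. Hence
\[
\mathrm{vol}(M_{\phi^n}) = n \cdot \mathrm{vol}(M_\phi),
\]
and Proposition \ref{Agol} applied to $\phi^n$ gives, for every triangulation $T \in \F(\Sigma)$,
\[
d(T, \phi^n(T)) \;\geq\; \frac{\mathrm{vol}(M_{\phi^n})}{\pi} \;=\; \frac{n\cdot \mathrm{vol}(M_\phi)}{\pi}.
\]
Since $M_\phi$ is hyperbolic, $\mathrm{vol}(M_\phi) > 0$, so this bound grows linearly in $n$.

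Next I would transfer this bound to $\MOD(\Sigma)$. By Lemma \ref{mcg-flipgraph}, for a fixed triangulation $T$ the orbit map $\omega_T \colon \MOD(\Sigma) \to \F(\Sigma)$ is a $(\lambda, \varepsilon)$-quasi-isometry for some constants $\lambda \geq 1$, $\varepsilon \geq 0$. Evaluating at the identity and $\phi^n$,
\[
d(T, \phi^n(T)) \;=\; d(\omega_T(\mathrm{id}), \omega_T(\phi^n)) \;\leq\; \lambda \cdot |\phi^n|_{\MOD(\Sigma)} + \varepsilon,
\]
where $|\cdot|_{\MOD(\Sigma)}$ denotes word length with respect to some (any) finite generating set. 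Combining the two displayed inequalities yields
\[
|\phi^n|_{\MOD(\Sigma)} \;\geq\; \frac{1}{\lambda}\left(\frac{n\cdot \mathrm{vol}(M_\phi)}{\pi} - \varepsilon\right),
\]
which is linear in $n$ with positive slope. Since the reverse bound $|\phi^n|_{\MOD(\Sigma)} \leq n \cdot |\phi|_{\MOD(\Sigma)}$ is automatic, we conclude that $n \mapsto \phi^n$ is a quasi-isometric embedding of $\Z$ into $\MOD(\Sigma)$, i.e. $\langle \phi\rangle$ is undistorted.

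The only step requiring care is the volume identity $\mathrm{vol}(M_{\phi^n}) = n\cdot \mathrm{vol}(M_\phi)$, which follows from the standard fact that the mapping torus of $\phi^n$ is an $n$-sheeted cyclic cover of $M_\phi$ (corresponding to the kernel of the composition $\pi_1(M_\phi) \twoheadrightarrow \Z \twoheadrightarrow \Z/n\Z$) together with multiplicativity of hyperbolic volume under finite covers. With this in hand the rest is a direct application of \v{S}varc-Milnor.
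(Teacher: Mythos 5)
Your proof is correct and follows essentially the same route as the paper: apply Proposition \ref{Agol} to $\phi^n$, use that $M_{\phi^n}$ is a degree-$n$ cyclic cover of $M_\phi$ to get $\mathrm{vol}(M_{\phi^n}) = n \cdot \mathrm{vol}(M_\phi)$, obtain the linear lower bound on $d(T,\phi^n(T))$, pair it with the automatic upper bound $d(T,\phi^n(T)) \leq n\cdot d(T,\phi(T))$, and transfer through the \v{S}varc--Milnor quasi-isometry of Lemma \ref{mcg-flipgraph}. Your write-up merely makes the quasi-isometry constants explicit where the paper leaves that step implicit.
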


\begin{proof}
We first prove that for every triangulation $T$ and for every $\phi$ we have: 
$$\frac{n \cdot \mathrm{vol}(M_\phi)}{\pi} \leq d(T, \phi^n (T)) \leq n \cdot d(T, \phi(T)). $$ 
	By Lemma \ref{mcg-flipgraph} this suffices to prove the corollary. 

The upper bound follows immediately by the triangle inequality. For the lower bound we use Proposition \ref{Agol}. We remark that since $M_{\phi^n}$ is a finite cover of degree $n$ of $M_\phi$ then
$$\mathrm{vol}(M_{\phi^n}) = n \cdot \mathrm{vol}(M_\phi).$$
It follows $$d(T, \phi^n(T)) \geq \frac{ \mathrm{vol}(M_{\phi^n})}{\pi} =\frac{ n \cdot \mathrm{vol}(M_\phi)}{\pi} .$$ 

\end{proof}

\subsubsection{The cone construction}
Fix a complete finite-area hyperbolic metric $M$ on $\Sigma$ and a homeomorphism $\varphi$ between $M$ and $\Sigma$. Let $P = \{ p_1, \ldots, p_n \}$ be the set of 
punctures of $\Sigma$. It is a classical result of Birman and Series that the set of all simple geodesics on $M$ is nowhere dense on $M$. We choose a point on 
the complement of the closure of all the simple geodesics of $M$ and consider its image by $\varphi$ on $\Sigma$. We denote this point $p_{n+1}$ (on both 
$\Sigma$ and $M$). We now set $P' = P \cup \{p_{n+1} \}$ and let $\Sigma'$ be the punctured surface $\Sigma$ with an extra marked point at $p_{n+1}$. This construction 
is known as \emph{puncturing} (see \cite{RS}). 
Let $T$ be a triangulation of $\Sigma$, denote by $\mathcal{G}_M(T)$ the unique $M$-geodesic representative in its isotopy class (it is an ideal triangulation as the marked 
points become punctures). Then $p_{n+1}$ is contained in a unique triangle of $\Sigma \setminus{\mathcal{G}_M(T)}$. We then {\it cone} the triangle in $p_{n+1}$: by this 
we mean add arcs between $p_{n+1}$ and the three vertices of the triangle to obtain a triangulation of $\Sigma'$ that we denote by $\widehat{T}$. We will also refer to the arcs going to $p_{n+1}$ as the \emph{cone} on $p_{n+1}$. In the following we will 
denote by $d$ the flip distance on $\F(\Sigma)$ and by $d'$ the flip distance on $\F(\Sigma')$.

\begin{lemma}\label{cone-lemma}
The cone map 
\begin{align*}
\mathrm{\cone}_{M}: \F(\Sigma) &\to \F(\Sigma') \\
T &\mapsto \widehat{T}
\end{align*}
is well-defined and 2-Lipschitz. 
\end{lemma}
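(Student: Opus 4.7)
The plan is to establish the two claims separately: first well-definedness, then the Lipschitz bound. Since the flip distance is a path metric, to prove $\cone_M$ is $2$-Lipschitz it suffices to show $d'(\widehat{T},\widehat{T'})\leq 2$ whenever $T$ and $T'$ differ by a single flip.

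For well-definedness, the key input is the choice of $p_{n+1}$ from the construction: $p_{n+1}$ avoids the closure of the union of all simple geodesics of $M$. Since the arcs of $\mathcal G_M(T)$ are pairwise disjoint simple geodesics (as $T$ is a triangulation), $p_{n+1}$ lies in the interior of exactly one component of $\Sigma\setminus\mathcal G_M(T)$, which is a single triangle $\Delta$. Coning $\Delta$ over $p_{n+1}$ then produces a maximal multiarc $\widehat T$, and because the geodesic representative of an arc depends only on its isotopy class, $\widehat T$ depends only on the isotopy class of $T$.

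For the Lipschitz estimate, let $T\to T'$ be a flip replacing an arc $a$ by $a'$, and let $Q$ be the quadrilateral in which this flip occurs, with geodesic realization in $M$. There are two cases. If $p_{n+1}$ lies in a triangle of $\mathcal G_M(T)$ disjoint from $Q$, then that triangle is also a triangle of $\mathcal G_M(T')$, so $\widehat T$ and $\widehat{T'}$ agree on their cone arcs and differ only by the flip $a\leftrightarrow a'$; hence $d'(\widehat T,\widehat{T'})=1$. If $p_{n+1}$ lies inside $Q$, label the vertices of $Q$ by $v_1,v_2,v_3,v_4$ cyclically with $a=v_1v_3$ and $a'=v_2v_4$, and assume without loss of generality that $p_{n+1}$ lies in the triangle $v_1v_2v_3$ of $T$ and in $v_1v_2v_4$ of $T'$. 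Then $\widehat T$ carries the cone arcs $p_{n+1}v_1,p_{n+1}v_2,p_{n+1}v_3$ together with $a=v_1v_3$, while $\widehat{T'}$ carries $p_{n+1}v_1,p_{n+1}v_2,p_{n+1}v_4$ together with $a'=v_2v_4$.

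The intermediate triangulation is produced by an explicit two-flip path: first flip $a=v_1v_3$ in $\widehat T$. This arc is a diagonal of the quadrilateral $p_{n+1}v_1v_4v_3$, whose other diagonal is $p_{n+1}v_4$, yielding a triangulation $T''$ containing the arcs $p_{n+1}v_1,p_{n+1}v_2,p_{n+1}v_3,p_{n+1}v_4$. Next, flip $p_{n+1}v_3$ in $T''$; it is a diagonal of the quadrilateral $p_{n+1}v_2v_3v_4$, whose other diagonal is $v_2v_4=a'$. The resulting triangulation equals $\widehat{T'}$, so $d'(\widehat T,\widehat{T'})\leq 2$. The main obstacle is just this bookkeeping in the second case — checking that the two chosen flips are admissible (the arcs sit as diagonals of genuine quadrilaterals with the predicted opposite diagonals) and that the final multiarc agrees with $\widehat{T'}$. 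Once done, the $2$-Lipschitz inequality for a single flip extends to arbitrary pairs by summing along a geodesic in $\F(\Sigma)$.
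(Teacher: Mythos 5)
Your proof is correct and follows essentially the same route as the paper: well-definedness from uniqueness of the geodesic realization, and the $2$-Lipschitz bound by exhibiting a two-flip path between $\widehat T$ and $\widehat{T'}$ when $T,T'$ differ by one flip. The paper simply defers the two-flip path to a figure, whereas you spell out the two cases (puncture outside $Q$, puncture inside $Q$) and the explicit flip sequence; the content is the same.
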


\begin{proof}
If $T'$ is a triangulation of $\Sigma$ isotopic to $T$, then $\mathcal{G}_M(T') = \mathcal{G}_M(T)$, so $\widehat{T'} = \widehat{T}$. 
Figure \ref{flips-cone-1} shows that if two triangulations differ by one flip, their images by the cone map differ by at most 2 flips.

\begin{figure}[h]
\leavevmode \SetLabels
\endSetLabels
\begin{center}
\AffixLabels{\centerline{\epsfig{file =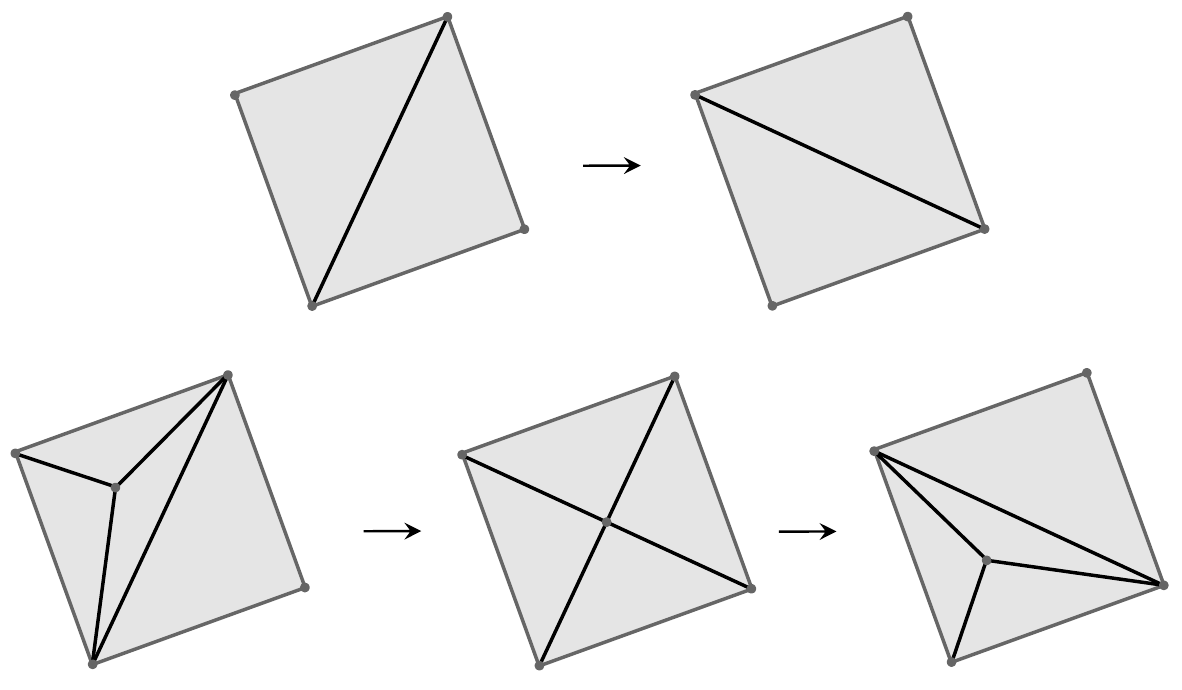,width=12.0cm,angle=0} }}
\vspace{-30pt}
\end{center}
\caption{A flip and a corresponding two flip move in the coned triangulation}\label{flips-cone-1}
\end{figure}

\end{proof}

We will now prove that $\cone_M$ is a quasi-isometric embedding. 
Fix a triangulation $H \in \F(\Sigma)$. Denote by 
$\omega_{H}: \MOD(\Sigma) \to \F(\Sigma)$ the orbit map of $H$ under $\MOD(\Sigma)$ as in Lemma \ref{mcg-flipgraph}. Similarly, denote by $\omega_{\widehat{H}}: \MOD(\Sigma') \to \F(\Sigma')$
the orbit map of $\widehat H$ under $\MOD(\Sigma')$. Recall that $\MOD(\Sigma)$ and $\MOD(\Sigma')$ are related by the Birman exact sequence, where the map $f: \MOD(\Sigma') 
\to \MOD(\Sigma)$ is the \emph{forgetful map}: 
$$ 1 \to \pi_1(\Sigma, p_{n+1}) \to \MOD(\Sigma') \overset{f} \to \MOD(\Sigma) \to 1 $$
 
\begin{lemma}\label{F-lipschitz}
Let $f: \MOD(\Sigma') \to \MOD(\Sigma)$ be the forgetful map.
Let $\omega^{-1}_{\widehat H}: \F(\Sigma') \to \MOD(\Sigma')$ be a quasi-inverse of $\omega_{\widehat{H}}$.
The following map is quasi-Lipschitz.
\begin{align*}
F: \F(\Sigma') & \to \F(\Sigma) \\
 T & \mapsto \omega_{H} \circ f \circ \omega^{-1}_{\widehat{H}}(T) 
\end{align*} 
For all $\psi' \in \MOD(\Sigma')$ we have $ F(\psi'( \widehat{H})) = f (\psi')(H)$. 
\end{lemma}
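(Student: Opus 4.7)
The plan is to verify both assertions by unwinding the definitions and invoking the machinery set up in the preliminaries. First I would handle the identity $F(\psi'(\widehat H)) = f(\psi')(H)$. The subtlety is that $\omega^{-1}_{\widehat H}$ is merely a quasi-inverse, so $\omega^{-1}_{\widehat H}(\psi'(\widehat H))$ need not equal $\psi'$ itself: it is some element $\psi''$ with $\psi''(\widehat H) = \psi'(\widehat H)$, equivalently $\psi'^{-1}\psi'' \in \Stab(\widehat H)$. The key observation is then that $f(\Stab(\widehat H)) \subseteq \Stab(H)$: any $\sigma \in \MOD(\Sigma')$ fixing $\widehat H$ setwise must permute its arcs, and since the three cone arcs all terminate at the marked point $p_{n+1}$ (which $\sigma$ fixes), $\sigma$ permutes the cone arcs amongst themselves, and therefore also permutes the remaining arcs, which comprise $H$, amongst themselves. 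Forgetting $p_{n+1}$ then yields $f(\sigma) \in \Stab(H)$. Hence $\omega_H(f(\psi'')) = f(\psi'')(H) = f(\psi')(H)$, giving the desired equality.

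Next I would show that $F$ is quasi-Lipschitz. Writing $F = \omega_H \circ f \circ \omega^{-1}_{\widehat H}$, this reduces to checking three standard facts. First, the quasi-inverse $\omega^{-1}_{\widehat H}$ is quasi-Lipschitz by its definition together with Lemma \ref{mcg-flipgraph}. Second, the orbit map $\omega_H$ is a quasi-isometry, in particular quasi-Lipschitz, again by Lemma \ref{mcg-flipgraph}. Third, the forgetful map $f$ is Lipschitz with respect to the word metrics: it is a group homomorphism between finitely generated groups, so fixing finite generating sets $S'$ and $S$ of $\MOD(\Sigma')$ and $\MOD(\Sigma)$, every generator $s' \in S'$ is sent to an element of bounded $S$-word length, and therefore word length scales linearly under $f$. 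Composing these three bounds yields the required quasi-Lipschitz estimate for $F$.

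I do not anticipate any serious obstacle here. The most delicate point is simply verifying that the ambiguity in the choice of quasi-inverse $\omega^{-1}_{\widehat H}$ is absorbed by the inclusion $f(\Stab(\widehat H)) \subseteq \Stab(H)$, which is a direct topological verification using the cone construction; everything else is a routine composition of quasi-Lipschitz maps together with the standard fact that surjective homomorphisms of finitely generated groups are Lipschitz in the word metric.
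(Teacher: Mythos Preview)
Your approach is correct and matches the paper's: the quasi-Lipschitz claim is obtained by composing the two quasi-isometries from Lemma \ref{mcg-flipgraph} with the forgetful homomorphism, which is Lipschitz in the word metric. The paper sharpens this last point slightly by observing that $f$ is actually $1$-Lipschitz with respect to the Humphries generators (each Humphries generator of $\MOD(\Sigma')$ maps to a Humphries generator of $\MOD(\Sigma)$ or to the identity), whereas you invoke the general fact that homomorphisms of finitely generated groups are Lipschitz; either suffices.

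For the identity $F(\psi'(\widehat H)) = f(\psi')(H)$, you are in fact more careful than the paper, which does not address it. Your reduction to $f(\Stab(\widehat H)) \subseteq \Stab(H)$ via the observation that the cone arcs are exactly the arcs of $\widehat H$ incident to $p_{n+1}$ is correct. One small point: your phrasing assumes the quasi-inverse $\omega^{-1}_{\widehat H}$ sends an orbit point $\psi'(\widehat H)$ to an exact preimage, i.e.\ to some $\psi''$ with $\psi''(\widehat H)=\psi'(\widehat H)$. A generic quasi-inverse need not do this, but one can (and implicitly does) choose the quasi-inverse to be a section on the orbit; you might make this choice explicit.
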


\begin{proof}
 It is immediate to see that $f$ is 1-Lipschitz with respect to the Humphreis generators of $\MOD(\Sigma)$. The assertion follows by composition with the two quasi-isometries.
\end{proof}
We remark that the quasi-Lipschitz constants of $F$ depend on the diameter of $\MF(\Sigma)$ and a choice of generators for $\MOD(\Sigma)$ and $\MOD(\Sigma') $. 
\begin{lemma}\label{2N}
For every $\psi \in \MOD(\Sigma)$ there exists $\phi' \in f^{-1}(\psi) \subset \MOD(\Sigma')$ such that
$$d'( \widehat{\psi H}, \phi' (\widehat{H})) \leq 2 \tilde{\kappa} $$
\end{lemma}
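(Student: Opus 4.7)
The plan is to construct a specific lift $\phi'$ of $\psi$ whose discrepancy with $\widehat{\psi H}$ can be tracked arc-by-arc, and then bound the resulting distance by iterating the 2-flip move from the proof of the Cone Lemma (Lemma~\ref{cone-lemma}).

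First, I would realize $\psi$ by a homeomorphism $\Psi : \Sigma \to \Sigma$ that sends $\mathcal{G}_M(H)$ to $\mathcal{G}_M(\psi H)$ as subsets of $\Sigma$; this is possible because the two triangulations are isotopic. The image $q := \Psi(p_{n+1})$ lies in some triangle $T^{*} := \Psi(T_H)$ of $\mathcal{G}_M(\psi H)$. In the dual graph of $\mathcal{G}_M(\psi H)$, I would pick a shortest path of adjacent triangles $T^{*} = S_0, S_1, \ldots, S_\ell = T_{\psi H}$ sharing edges $e_0, \ldots, e_{\ell-1}$; since the dual graph has $\tilde{\kappa}$ vertices, $\ell \leq \tilde{\kappa}-1$.

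Next, I would set $\Psi' := \Theta \circ \Psi$, where $\Theta : \Sigma \to \Sigma$ is a homeomorphism isotopic to the identity in $\Sigma$ (so that $f([\Psi']) = \psi$) that sends $q$ to $p_{n+1}$ along a path crossing exactly the arcs $e_0, \ldots, e_{\ell-1}$, once each. The resulting lift $\phi' := [\Psi'] \in f^{-1}(\psi)$ has the following key property: viewed in $\Sigma'$, the non-cone multi-arc $\phi'(H) = \Psi'(\mathcal{G}_M(H))$ equals $\mathcal{G}_M(\psi H)$ with each of the $e_i$ ``pushed across'' $p_{n+1}$, and the cone triangle $\phi'(T_H)$ is precisely the triangle of this pushed realization that contains $p_{n+1}$.

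Finally, I would exhibit a path in $\F(\Sigma')$ from $\widehat{\psi H}$ to $\phi'(\widehat H)$ of length at most $2\ell$, by undoing the arc pushes one at a time in the order $e_{\ell-1}, e_{\ell-2}, \ldots, e_0$. Each single push-undo is realized by the same 2-flip move that already appears in Figure~\ref{flips-cone-1}: first flip the shared edge $e_i$ (which is flippable at that stage because its flip-diagonal is not yet present in the triangulation), then flip the adjacent old cone arc into the now-vacant position on the far side of $p_{n+1}$, producing the new cone at the next triangle of the dual path. Concatenating these moves gives
$$
d'(\widehat{\psi H},\, \phi'(\widehat H)) \;\leq\; 2\ell \;\leq\; 2(\tilde{\kappa}-1) \;<\; 2\tilde{\kappa}.
$$

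The main obstacle I anticipate is verifying local flippability throughout the induction in the last step: at each stage, the arc to be flipped must have its flip-partner absent from the current triangulation. This should follow from the minimality of the dual-graph path (so the pushed arcs are consecutive) together with a careful local check of the two triangles meeting along each $e_i$, but writing it cleanly requires some bookkeeping of which cone arcs are present at each intermediate stage.
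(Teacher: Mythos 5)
Your proposal is essentially the paper's proof: both argue by transporting the cone on $p_{n+1}$ across the dual graph of $\mathcal{G}_M(\psi H)$ via the two-flip move of Lemma~\ref{cone-lemma}, paying two flips per dual edge on a path of length at most $\tilde{\kappa}$, the only organizational difference being that you build the lift $\phi'$ in advance as a point-push of a fixed lift along the dual-graph geodesic, while the paper takes an arbitrary lift $\psi'$ and constructs the correcting homeomorphism $\theta\in\ker f$ after the fact. Your flippability worry is also more cautious than necessary and slightly misdiagnosed: an arc fails to be flippable only when it is the inner arc of a self-folded once-punctured monogon (not because the ``flip-diagonal'' is already present), and each arc you flip separates a triangle having $p_{n+1}$ as a vertex from a triangle of $\mathcal{G}_M(\psi H)$ that does not, so the two sides are always distinct and no delicate bookkeeping is required.
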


\begin{proof}
Fix $\psi \in \MOD(\Sigma)$ and choose $\psi' \in \MOD(\Sigma')$ such that $f (\psi')= \psi$, that is, $\psi$ and $\psi'$ are homeomorphisms of $\Sigma$ isotopic rel $P$ . 
Let us first compare $\widehat{\psi (H)}$ and $\psi'(\widehat{H})$. 

To construct $\psi'(\widehat{H})$ we proceed as follows. Set $\Sigma \setminus \mathcal{G}_M(H) = \bigcup \Delta_i$ where $\Delta_i$ is a triangle. We assume $p_{n+1} \in \Delta_1$, so that $\hat{H}$ is obtained by $H$ coning $\Delta_1$ and $\phi'(\widehat H)$ is obtained coning $\psi'(\Delta_1)$. To construct $\widehat{\psi(H)}$ we proceed as follows. Set $\Sigma \setminus \mathcal{G}_M(\psi(H)) = \bigcup \Delta_i'$, where $\Delta'_i$ is a 
triangle. Since $\psi' \in f^{-1}(\psi)$, we can assume (up to reordering) that $\Delta_i' $ is isotopic to $\psi'(\Delta_i)$ relative to $P$. 
We have two cases: 
\begin{enumerate}
\item $p_{n+1} \in \Delta_1'$; 
\item $p_{n+1} \not \in \Delta_1'$. 
\end{enumerate}
In case (1), we can glue the homeomorphisms $\Delta_i' \to \psi'( \Delta_i)$ in order to construct a homeomorphism $\theta: \Sigma \to \Sigma $ that also fixes $p_{n+1}$. By construction, $\theta$ is an element of $\MOD(\Sigma')$ isotopic to the identity rel $P$, that is, $\theta$ belongs to the kernel of the forgetful map $f$, and we obtain $\theta(\widehat{\psi H}) = \psi'( \widehat{H})$. Consider the mapping class $\phi' = \theta^{-1} \circ \psi' \in \MOD(\Sigma')$, by construction $$f(\theta^{-1} \circ \psi') = f(\psi') = \psi \mbox{ and } \widehat{\psi H} = \phi'( \widehat{H}),$$ and we are done.

In case (2), assume $p_{n+1} \in \Delta_j'$ with $j \neq 1$. We will now see that using at most $2 \tilde{\kappa}$ flips we can move the cone on $p_{n+1}$ inside a triangle isotopic to $\Delta_1'$. More precisely, a sequence of two flips as in Figure \ref{flips-cone-2} moves the cone in a triangle adjacent to $\Delta_j'$. Note that this sequence of flips does not change the isotopy class relative to $P$ of the arcs not connected to $p_{n+1}$. The final triangulation $T_1$ we obtain has the following properties: 
\begin{itemize}
\item $T_1$ has a cone on $p_{n+1}$; 
\item $T_1$ agrees with $\widehat{\psi H}$ outside the quadrilateral in Figure \ref{flips-cone-2}; 
\item the arcs of $\widehat{\psi H}$ and $T_1$ that are not connected to $p_{n+1}$ are pairwise isotopic relative $P$. 
\end{itemize}
If the triangle of $T_1$ containing the cone on $p_{n+1}$ is isotopic to $\Delta_1'$ relative to $P$, then we can proceed as in case (1). Indeed, we construct an homeomorphism $\theta: \Sigma' \to \Sigma'$ such that $\theta(T_1) = \psi'(\widehat{H})$ and $\theta$ is isotopic to the identity relative to $P$. We then set $\phi'= \theta^{-1} \circ \psi'$, and we have $T_1 = \phi'(\widehat{H})$.
Otherwise, if the triangle of $T_1$ containing $p_{n+1}$ is not isotopic to $\Delta_1'$, we keep on performing sequences of flips like in Figure \ref{flips-cone-2} in order to move the cone on $p_{n+1}$. After at most $\tilde{\kappa}$ sequences of flips, we get to a triangulation $T_{\tilde{\kappa}}$ whose cone on $p_{n+1}$ lies inside a triangle isotopic to $\Delta_1'$. Arguing as above, we get a homeomorphism $\theta: \Sigma' \to \Sigma'$, isotopic relative to $P$ to the identity, such that $\theta(T_{\tilde{\kappa}}) = \psi'(\widehat{H})$. We then set $\phi'= \theta^{-1} \circ \psi'$, we have $T_{\tilde{\kappa}} = \phi'(\widehat{H})$. 
We conclude as follows: 
\begin{align*}
d'( \widehat{\psi (H)}, \phi'( \widehat{H})) & \leq d'( \widehat{\psi (H)}, T_{\tilde{\kappa}}) + d'( T_{\tilde{\kappa}} , \phi'( \widehat{H})) \\
& \leq 2 \cdot \tilde{\kappa}. 
\end{align*} 

\begin{figure}[h]
\leavevmode \SetLabels
\endSetLabels
\begin{center}
\AffixLabels{\centerline{\epsfig{file =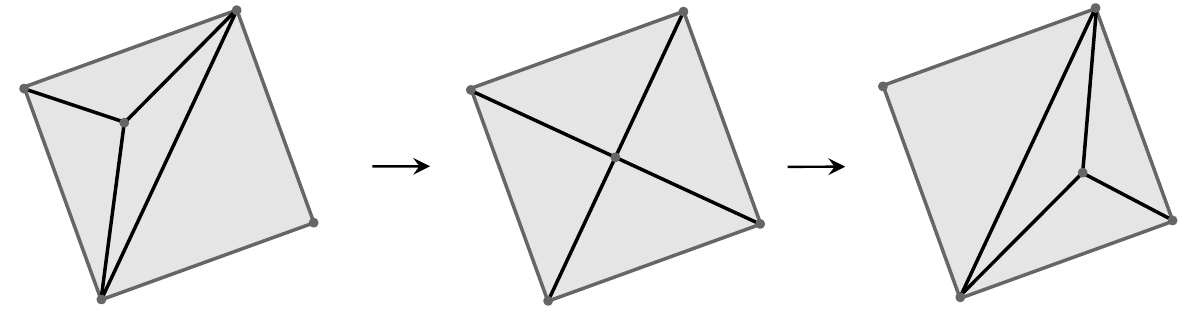,width=12.0cm,angle=0} }}
\vspace{-30pt}
\end{center}
\caption{Passing from $\widehat{\psi H}$ to $T_1$}\label{flips-cone-2}
\end{figure}
\end{proof}

\begin{theorem}
$\mathrm{cone}_M: \F(\Sigma) \to \F(\Sigma')$ is a quasi-isometric embedding.
\end{theorem}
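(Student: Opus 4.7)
The upper bound $d'(\widehat{T}, \widehat{S}) \leq 2\, d(T, S)$ is Lemma \ref{cone-lemma}, so the only thing left is a linear bound on $d(T,S)$ in terms of $d'(\widehat T, \widehat S)$. The natural candidate for a coarse left inverse of $\cone_M$ is the map $F : \F(\Sigma') \to \F(\Sigma)$ from Lemma \ref{F-lipschitz}, which is already known to be quasi-Lipschitz with some constants $\lambda_F,\epsilon_F$. The whole argument then reduces to showing that $F \circ \cone_M$ lies within a uniform constant $C$ of the identity on $\F(\Sigma)$: once this is granted, the triangle inequality applied along $T, F(\widehat T), F(\widehat S), S$ combined with the quasi-Lipschitz estimate for $F$ immediately gives
$$d(T,S) \;\leq\; 2C + \lambda_F\, d'(\widehat T, \widehat S) + \epsilon_F,$$
which is exactly the lower bound we need.

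To produce the constant $C$, fix $T \in \F(\Sigma)$. By cocompactness of the $\MOD(\Sigma)$-action on $\F(\Sigma)$ (\v{S}varc-Milnor via Lemma \ref{mcg-flipgraph}) there exists $\psi \in \MOD(\Sigma)$ with $d(T, \psi(H)) \leq \diam(\MF(\Sigma))$. Apply Lemma \ref{2N} to this $\psi$: one obtains $\phi' \in f^{-1}(\psi)$ with $d'(\widehat{\psi H}, \phi'(\widehat H)) \leq 2\tilde\kappa$. Since $\cone_M$ is $2$-Lipschitz, $d'(\widehat T, \widehat{\psi H}) \leq 2\diam(\MF(\Sigma))$, so $\phi'(\widehat H)$ is within a uniform distance of $\widehat T$. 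Because $\omega_{\widehat H}$ is a quasi-isometry and $\omega_{\widehat H}^{-1}$ one of its quasi-inverses, the mapping class $\omega_{\widehat H}^{-1}(\widehat T)$ sits within uniformly bounded $\MOD(\Sigma')$-distance of $\phi'$. Pushing this bounded distance through the $1$-Lipschitz forgetful map $f$ and then through the quasi-isometry $\omega_H$ shows that $F(\widehat T) = \omega_H \circ f \circ \omega_{\widehat H}^{-1}(\widehat T)$ is a uniformly bounded distance from $\omega_H(\psi) = \psi(H)$, which is itself at most $\diam(\MF(\Sigma))$ from $T$. Summing these contributions produces the required $C$, independent of $T$.

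The conceptually hard step is already encoded in Lemma \ref{2N}: being able to replace an \emph{arbitrary} lift of $\psi$ to $\MOD(\Sigma')$ by a specific one that sends $\widehat H$ close to $\widehat{\psi H}$ is exactly what breaks the indeterminacy caused by the kernel $\pi_1(\Sigma, p_{n+1})$ in the Birman exact sequence and ensures that $F$ really is a coarse inverse to $\cone_M$ and not merely to some other lift. Everything else in the plan is bookkeeping of additive constants coming from the \v{S}varc-Milnor quasi-isometries and from the $1$-Lipschitz map $f$.
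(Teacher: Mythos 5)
Your proposal is correct and uses the same ingredients as the paper's proof: the $2$-Lipschitz bound from Lemma \ref{cone-lemma}, the quasi-Lipschitz map $F$ from Lemma \ref{F-lipschitz}, the compatible-lift estimate $d'(\widehat{\psi H},\phi'(\widehat H))\leq 2\tilde\kappa$ from Lemma \ref{2N}, and the $\diam(\MF(\Sigma))$-density of the $H$-orbit. The only difference is organizational — you isolate the statement that $F\circ\cone_M$ is within a uniform constant of the identity and then apply the triangle inequality, whereas the paper folds the same estimates directly into one chain of inequalities along $S,\psi H,\phi H,T$; the underlying argument and constants are the same.
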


\begin{proof}
By Lemma \ref{cone-lemma}, $\cone_M$ is 2-Lipschitz. We will now prove that there exists some universal constants $A', B'>0$ such that for any $S,T \in \F(\Sigma)$ we have $d(S, T) \leq A' \cdot d'(\widehat{S}, \widehat{T}) + B'$. 
Set $R = \diam \MF(\Sigma)$. Since the orbit of $H$ under $\MOD(\Sigma)$ is $R$-dense, we can find $\psi, \phi \in \MOD(\Sigma)$ such that 
\begin{equation} \label{eq:R}
d(S, \psi H) \leq R ~~ \mbox{ and } ~~ d(T, \phi H) \leq R
\end{equation}
By Lemma \ref{cone-lemma} we have:
\begin{equation} \label{eq:2R}
d'(\widehat{S}, \widehat{\psi H}) \leq 2R~~ \mbox{ and } ~~ d'(\widehat{T}, \widehat{\phi H}) \leq 2R
\end{equation}
Finally fix $\psi', \phi' \in \MOD(\Sigma')$ as in Lemma \ref{2N}. We use the notation $a \prec b$ to mean $a \leq k b + h$ for constants $k$ and $h$. 

With this notation: 
\begin{align*}
d(S, T) & \leq d(\psi H, \phi H) + 2R && \mbox{by \ref{eq:R}} \\ 
	 & = d( F(\psi'( \widehat H)), F(\phi' (\widehat H))) + 2R \\ 
 & \prec d'(\psi' (\widehat H) , \phi'( \widehat H)) && \mbox{ by Remark \ref{F-lipschitz}} \\
 & \prec d'(\psi'( \widehat H), \widehat{\psi H}) + d'(\widehat{\psi H}, \widehat{\phi H}) + d'(\phi'( \widehat H), \widehat{\phi H}) \\ 
 & \prec d'(\widehat{\psi H}, \widehat{\phi H}) + 4\cdot \tilde{\kappa} && \mbox{by Lemma \ref{2N} } \\
 & \prec d'(\widehat{\psi H} , \widehat{S} ) + d'(\widehat{S} , \widehat{T} ) + d'(\widehat{\phi H} , \widehat{T} ) \\
 & \prec d'(\widehat{S} , \widehat{T} ) + 4R && \mbox{ by Equation \ref{eq:2R}} 
\end{align*}
For $F$ is $(A,B)$ quasi-Lipschitz, and if we keep track of the constants in the above inequalities, we have:
$$d(S, T) \leq A' \cdot d'(\widehat{S} , \widehat{T} ) + B', $$ where $A' = A$ and $B' = 4 A R + 4 A \tilde{\kappa} + B + 2R$. 
\end{proof}

The following result was already proved by Mosher \cite{Mosher2} using a different method, and stated by Rafi-Schleimer \cite{RS} using the marking graph as a large scale model for $\MOD(\Sigma)$. 
\begin{corollary}
There is a quasi-isometric embedding $\MOD(\Sigma) \hookrightarrow \MOD(\Sigma')$.
\end{corollary}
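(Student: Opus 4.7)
The plan is to obtain the quasi-isometric embedding by composing the quasi-isometric embedding $\cone_M$ of flip graphs established in the previous theorem with the quasi-isometries between each mapping class group and its corresponding flip graph. Concretely, by Lemma \ref{mcg-flipgraph} applied to $\Sigma$, the orbit map $\omega_H : \MOD(\Sigma) \to \F(\Sigma)$ is a quasi-isometry for any fixed $H \in \F(\Sigma)$, and similarly, applied to $\Sigma'$, the orbit map $\omega_{\widehat{H}} : \MOD(\Sigma') \to \F(\Sigma')$ is a quasi-isometry, so it admits a quasi-inverse $\omega_{\widehat{H}}^{-1} : \F(\Sigma') \to \MOD(\Sigma')$.

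Then I would consider the composition
$$\Psi \ := \ \omega_{\widehat{H}}^{-1} \circ \cone_M \circ \omega_H \ : \ \MOD(\Sigma) \longrightarrow \MOD(\Sigma').$$
Since the composition of a quasi-isometric embedding with quasi-isometries on either side is itself a quasi-isometric embedding, $\Psi$ is a quasi-isometric embedding, which is precisely the statement. The resulting constants depend only on the (finite) constants of $\omega_H$, $\omega_{\widehat{H}}$, $\omega_{\widehat{H}}^{-1}$, and $\cone_M$.

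There is no real obstacle once the previous theorem has been established: all of the geometric content is packaged in the quasi-isometric embedding $\cone_M$, and the corollary is a purely formal consequence of the \v{S}varc--Milnor lemma applied to the two mapping class groups acting on their respective flip graphs. I would however add a brief remark that $\Psi$ is not canonical (it depends on the choice of quasi-inverse), is not in general a group homomorphism, and may be viewed as a coarse section of the Birman forgetful surjection $f : \MOD(\Sigma') \to \MOD(\Sigma)$ given by the exact sequence recalled before Lemma \ref{F-lipschitz}; none of these features affect the quasi-isometric-embedding conclusion at the level of metric spaces.
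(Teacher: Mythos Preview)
Your proposal is correct and is essentially the same argument as the paper's: the paper likewise composes the quasi-isometries $\omega_H$ and $\omega_{\widehat H}$ from Lemma \ref{mcg-flipgraph} with the quasi-isometric embedding $\cone_M$ from the preceding theorem (presented via a commutative diagram rather than writing out $\Psi = \omega_{\widehat H}^{-1}\circ\cone_M\circ\omega_H$ explicitly). Your additional remarks about non-canonicity and the relation to the Birman sequence are accurate but not needed for the conclusion.
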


\begin{proof}
Consider the following commutative diagram:
 $$\xymatrix{
\F(\Sigma) \ar[r]^{\mathrm{cone}_M} &\F(\Sigma') \\
\MOD(\Sigma) \ar[u]^{{\omega_H}_|} \ar[r] & \MOD(\Sigma') \ar[u]_{\omega_{\widehat{H}}} }
$$
By Lemma \ref{mcg-flipgraph} both $\omega_H$ and $\omega_{\widehat{H}}$ are quasi-isometries. The assertion then follows from the above theorem. 
\end{proof}

 \section{Convexity of strata and applications}
As we saw previously, for any multiarc $A$ the stratum $\F_A$ is connected. We denote by $d_A$ the shortest path distance on $\F_A$. In this section we prove that the natural inclusion $(\F_A, d_A) \hookrightarrow (\F,d)$ is an isometric embedding. Furthermore, we prove that $\F_A$ is strongly convex in $\F(\Sigma)$. The main ingredient in our proof is a 1-Lipschitz retraction of $\F(\Sigma)$ on $\F_A$. 

\subsection{The projection theorem} 
Let $a$ and $t$ be two arcs. Choose an orientation on $a$, denote by $a^+$ the oriented arc, and let $\mathrm{push}_{a^+}(t)$ be the multiarc defined as follows: 
\begin{itemize}
 \item if $i(a, t) =0$ then $\mathrm{push}_{a^+}(t) = t$; 
 \item if $i(a, t) \neq 0$ then $\mathrm{push}_{a^+}(t)$ is the multiarc obtained by ``combing'' $t$ following the orientation 
of $a$ as in Figure \ref{fig:combing}. Each arc in $\mathrm{push}_{a^+}(t)$ (provided $i(a, t) \neq 0$) has at least one endpoint that coincides with the final endpoint of $a$. \end{itemize}

\begin{figure}[htbp]
\begin{center}
\includegraphics[width=9cm]{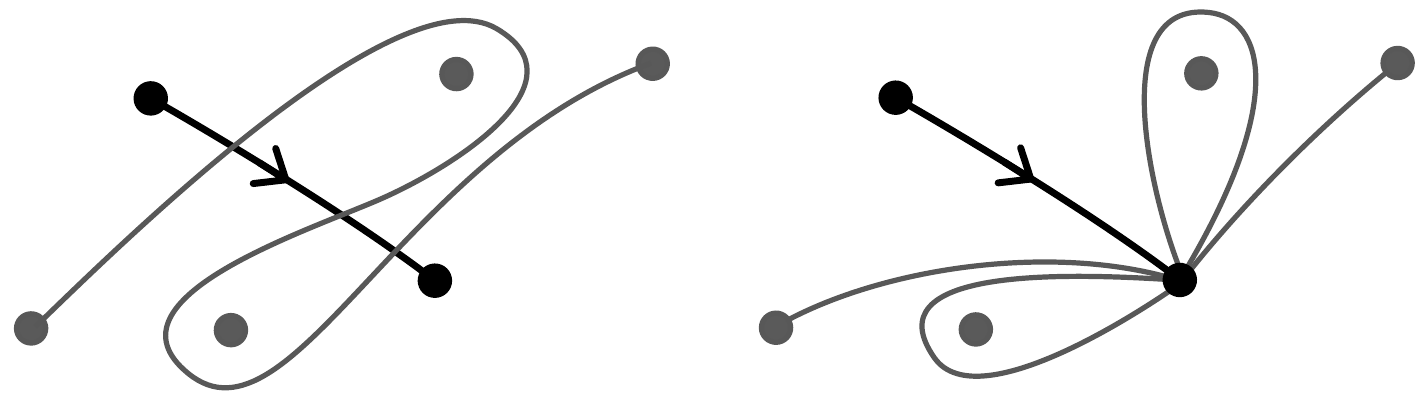}
\caption{Combing along an oriented arc}\label{fig:combing}
\end{center}
\end{figure}

The following lemma follows immediately by the above construction.
\begin{lemma}\label{geo-strata:lemma}
If $s$ and $t$ are arcs, then $ i(\mathrm{push}_{a^+}(s), \mathrm{push}_{a^+}(t)) \leq i(s,t)$. 
\end{lemma}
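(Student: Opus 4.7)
The plan is to exhibit concrete representatives of $\mathrm{push}_{a^+}(s)$ and $\mathrm{push}_{a^+}(t)$ whose total number of transverse intersections is at most $i(s,t)$. Since the intersection number is defined as the minimum over representatives, this will give the lemma.

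First I would put $s$, $t$, $a$ in simultaneous pairwise minimal and general position, so that $|s\cap t|=i(s,t)$ and $s\cap t\cap a=\emptyset$. Then I would choose a regular neighborhood $N$ of $a$ thin enough that $s\cap N$ and $t\cap N$ each consist of disjoint short sub-arcs crossing $N$ transversally, one for every point of $s\cap a$ and $t\cap a$, and fix one side of $a^+$ as the combing side. To model $\mathrm{push}_{a^+}(s)$, at every crossing $p\in s\cap a$ I cut $s$ at $p$ and attach to both resulting endpoints a ``hugging strand'' running inside $N$ on the chosen side of $a^+$ all the way to the terminal endpoint of $a$. I do the same for $t$. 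Crucially, I arrange all the hugging strands (for both $s$ and $t$) in pairwise disjoint parallel tracks inside $N$, stacked according to the order along $a^+$ of their starting crossings.

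With these concrete representatives in hand, the intersection points of $\mathrm{push}_{a^+}(s)$ and $\mathrm{push}_{a^+}(t)$ split into three types according to whether each local branch is ``hugging'' (contained in $N$) or ``free'' (contained in $\Sigma\setminus N$, except possibly for endpoints on $\partial N$ or at the terminal endpoint of $a$). Hugging-hugging intersections vanish by the parallel-track construction. Hugging-free intersections also vanish, because a hugging strand lies in $N$ while a free strand lies outside $N$, and any common points are endpoints on $\partial N$ or the common terminal endpoint of $a$, neither of which is counted as an interior intersection. The surviving free-free intersections are in bijection with a subset of the intersection points $s\cap t$ lying outside $N$, so there are at most $|s\cap t|=i(s,t)$ of them, and we are done.

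The main obstacle I expect is making the parallel-track construction fully rigorous, especially verifying that the hugging strands coming from both $s$ and $t$ can be realized simultaneously as pairwise disjoint curves inside $N$ that share only the common terminal endpoint of $a^+$. I would handle this by linearly ordering all crossings of $s\cup t$ with $a$ along $a^+$ from terminal to initial endpoint, assigning successive parallel tracks on the chosen side of $a$, and routing each hugging strand along its track. Because every strand travels monotonically in the same direction toward the same endpoint without backtracking, distinct tracks cannot cross, so the construction produces the pairwise disjoint realization required in the three-type decomposition above.
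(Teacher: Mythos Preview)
Your approach is correct and is exactly what the paper intends: its entire proof of this lemma is the single sentence that it ``follows immediately by the above construction,'' and you have simply spelled out that construction with the concrete representatives and the three-type intersection count. One small correction: the two hugging strands arising from a single crossing $p\in s\cap a$ naturally lie on \emph{opposite} sides of $a$ inside $N$ (each on the side of the free piece it continues), not both on one chosen side; but this only makes your parallel-track step easier, since strands on opposite banks of $N$ are automatically disjoint and strands on the same bank can be nested by their order along $a^+$ exactly as you describe.
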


If $T=(t_1, \ldots, t_\kappa)$ is a triangulation of $\Sigma$, we denote by $\mathrm{push}_{a^+}(T)$ the multiarc obtained collecting the isotopy classes of all the arcs $\mathrm{push}_{a^+}(t_i)$: $\mathrm{push}_{a^+}(T) = [\mathrm{push}_{a^+}(t_1), \ldots , \mathrm{push}_{a^+}(t_\kappa)]$.
We remark that the set $\{\mathrm{push}_{a^+}(t_i)\}$ may contain isotopic arcs. 

\begin{lemma}\label{projection1}
The map 
\begin{align*}
 \pi_{a^+} : \F(\Sigma) & \to \F_a \\
 T & \mapsto (\mathrm{push}_{a^+}(T), a)
 \end{align*}
 is a 1-Lipschitz retraction on $(\F_a, d_a)$.
\end{lemma}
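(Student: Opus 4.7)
The plan is to establish three properties of the map $\pi_{a^+}$ in order: that its image lies in $\F_a$ (well-definedness), that it fixes every element of $\F_a$ (the retraction property), and that it is 1-Lipschitz. The hard part will be the first point, and in particular verifying that $(\mathrm{push}_{a^+}(T), a)$ really defines a triangulation of $\Sigma$ containing $a$, despite the fact that several of the arcs $\mathrm{push}_{a^+}(t_i)$ may coincide up to isotopy.

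For well-definedness I would first check that the constituent arcs of $\mathrm{push}_{a^+}(T)$ are disjoint from $a$ and pairwise disjoint from each other: the former is built into the combing construction, since each pushed arc is isotoped off $a$; the latter follows from Lemma \ref{geo-strata:lemma} applied to each pair of distinct arcs of $T$, which have intersection number zero. The remaining point, and the main obstacle, is to see that after discarding isotopic repetitions, $\mathrm{push}_{a^+}(T) \cup \{a\}$ has exactly $\kappa$ components and cuts $\Sigma$ into triangles. To handle this I would think of the combing as an ambient sliding operation that drags the intersections of the arcs of $T$ with $a$ along $a^+$ to its terminal endpoint; this preserves the topological pattern of how the arcs of $T$ decompose $\Sigma$, and it produces exactly one pushed copy isotopic to $a$ itself as a byproduct (coming from the arcs of $T$ that were forced to follow $a$ all the way to its end). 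Identifying one of those duplicate copies with the arc $a$ then yields a triangulation in $\F_a$.

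The retraction property is then immediate: if $a \in T$ then $i(a,t)=0$ for every $t \in T$, so $\mathrm{push}_{a^+}(t) = t$ by the first clause of the definition, hence $\pi_{a^+}(T) = (T, a) = T$.

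Finally, for the 1-Lipschitz property it suffices to check the case of a single flip, since the inequality then extends along any geodesic path in $\F(\Sigma)$. If $T' = (T \setminus \{t\}) \cup \{t'\}$, then the push operation is defined arc by arc, so $\pi_{a^+}(T)$ and $\pi_{a^+}(T')$ agree on every arc coming from $T \cap T'$. Consequently the two triangulations of $\F_a$ differ in at most one arc, and any two triangulations with this property are either equal or related by a single flip, giving $d_a(\pi_{a^+}(T), \pi_{a^+}(T')) \leq 1$.
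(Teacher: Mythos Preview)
Your retraction argument is fine and matches the paper. The other two parts each have a genuine gap.

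\textbf{Well-definedness.} Your sketch that combing is ``an ambient sliding operation that preserves the topological pattern'' is the right intuition, but it is not a proof, and the unjustified claim that ``exactly one pushed copy isotopic to $a$'' appears is not obviously correct as stated. What actually needs checking is that, after discarding repetitions and adjoining $a$, one obtains \emph{exactly} $\kappa$ isotopy classes cutting $\Sigma$ into triangles. The paper makes your sliding idea precise: it parametrises $a$ by $[0,1]$, lets the intersection points with $T$ occur at $\tau_1<\cdots<\tau_N$, and builds an intermediate decomposition $D_{\tau_i}$ with an extra vertex at $a(\tau_i)$. The key claim, proved by induction on $i$, is that $D_{\tau_i}$ consists of triangles together with exactly one quadrilateral (the next triangle to be crossed, now carrying the extra vertex on its boundary). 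At the final step $\tau_{N+1}=1$ the moving vertex merges with a genuine marked point and the quadrilateral collapses, leaving only triangles. Your paragraph gestures at this process but does not supply the inductive invariant that makes it work.

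\textbf{The 1-Lipschitz step.} Your argument that $\pi_{a^+}(T)$ and $\pi_{a^+}(T')$ ``differ in at most one arc'' because they share all the pushed arcs coming from $T\cap T'$ has a hole: $\mathrm{push}_{a^+}(t)$ is in general a \emph{multiarc}, not a single arc (it has $i(a,t)+1$ pieces before de-duplication). So the common multiarc $M=\{a\}\cup\bigcup_{s\in T\cap T'}\mathrm{push}_{a^+}(s)$ could a priori have fewer than $\kappa-1$ distinct classes, and each of the two triangulations could then contribute several arcs outside $M$. The paper sidesteps this by working with intersection numbers: Lemma~\ref{geo-strata:lemma} gives $i(\mathrm{push}_{a^+}(s),\mathrm{push}_{a^+}(t))\leq i(s,t)$ for every pair of arcs, and summing over all pairs yields $i(\pi_{a^+}(T),\pi_{a^+}(T'))\leq i(T,T')=1$. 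Since two triangulations with intersection number at most $1$ are equal or differ by a single flip, the conclusion follows. You invoked Lemma~\ref{geo-strata:lemma} only for disjointness in the well-definedness step; using it again here, quantitatively, closes the gap.
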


\begin{proof}
We first prove that $\pi_{a^+}(T)$ is also a triangulation of $\Sigma$. 
If $a$ is one of the arcs in $T$, the assertion is trivial. Suppose $a\cap T \neq \emptyset$.

We consider a parametrization of $a:[0,1]\to \Sigma$ following the orientation of $a^{+}$. We suppose that $a$ intersects $T$ minimally and all intersections are transversal so we denote
$$\tau_0=0<\tau_1<\hdots <\tau_{N+1}=1$$
the values of $\tau$ for which $a(\tau) \in T$. Note that $N = i(a,T)$. 

For each $\tau' \in [0,1]$, we consider the following decomposition $D_{\tau'}$ of $\Sigma$ constructed as follows. To begin, $D_{\tau'}$ contains all arcs of $T$ that do not cross $a|_{\tau=0}^{\tau'}$, contains all vertices of $T$ and has one extra vertex $a(\tau')$. Furthermore, it also contains the arc $a|_{\tau=0}^{\tau'}$. We add arcs iteratively as follows. For each $\tau_i$, $i=1,\hdots, N-2$, the point $a(\tau_i)$ will cut a preexisting arc, say $b_i$, into two subarcs $b'_{i}$ and $b''_{i}$. We add these to the decomposition and they continue to belong to the decomposition for $\tau'>\tau_i$ by concatenating them with the arc $a|_{\tau=\tau_i}^{\tau'}$ in the obvious way. At parameter $\tau'$ we denote the resulting arcs $b'_{i}(\tau')$ and $b''_{i}(\tau')$. $D_{\tau'}$ is the union of all these arcs up to isotopy fixing the vertices (so any isotopy class is only counted once). 

We want to show that $D_{\tau_{N+1}}$ is a triangulation of $\Sigma$ with the same vertex set as $T$. Before showing this we claim that for $0\leq i<N+1$, $D_{\tau_i}$ is a set of arcs decomposing $\Sigma$ into triangles and into one quadrilateral which is simply a triangle with an additional vertex $a(\tau_i)$.

We prove our claim by analyzing the decomposition as $\tau$ varies. The key point is that the decomposition only changes for the values $\tau_i$.

For $i=1$ as all we have added is an arc and a point that splits the first triangle traversed by $a$ into two triangles. As we have added a vertex in $a(\tau_1)$, the following triangle of $T$ traversed by $a$ is now a quadrilateral (see Figure \ref{fig:firststep}). 

\begin{figure}[h]
\leavevmode \SetLabels
\L(.56*.97) $a(0)$\\
\L(.47*.41) $a(\tau_1)$\\
\endSetLabels
\begin{center}
\AffixLabels{\centerline{\epsfig{file =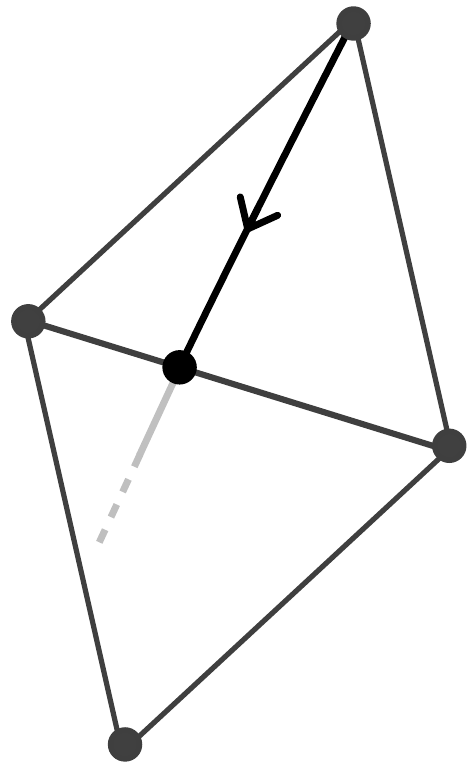,width=3.0cm,angle=0} }}
\vspace{-30pt}
\end{center}
\caption{When the second triangle becomes a quadrilateral} \label{fig:firststep}
\end{figure}

Now suppose by induction that at parameter $\tau_i$ with $i<N$ the decomposition $D_{\tau_i}$ is as claimed and we now analyze $D_{\tau_{i+1}}$. 

To obtain $D_{\tau_{i+1}}$ from $D_{\tau_i}$ we have a continuous family $D_\tau$ with $\tau \in [\tau_{i},\tau_{i+1}]$. Note that $a|_{\tau=\tau_i}^{\tau_{i+1}}$ is a simple path crossing the only quadrilateral of $\Sigma \setminus D_{\tau_i}$. While $\tau' \in [\tau_{i},\tau_{i+1}[$, $D_\tau$ is just obtained by pushing $a(\tau)$ along the path $a|_{\tau=\tau_i}^{\tau'}$ and thus (up to homeomorphism) is a carbon copy of $D_{\tau_i}$. 

\begin{figure}[h]
\leavevmode \SetLabels
\L(.66*.41) $a(\tau_{i+1})$\\
\L(.29*.73) $a(\tau_i)$\\
\endSetLabels
\begin{center}
\AffixLabels{\centerline{\epsfig{file =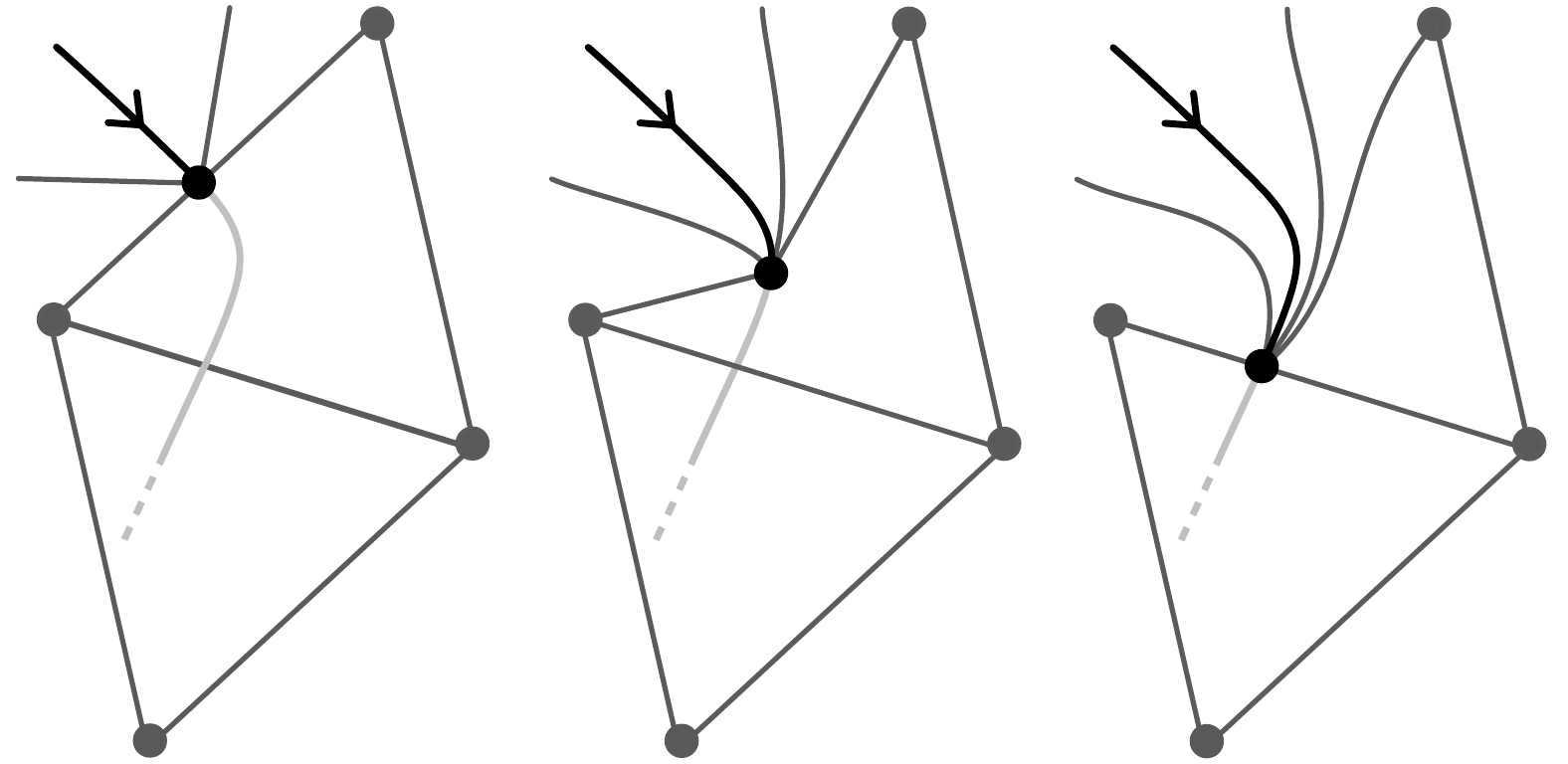,width=9.0cm,angle=0} }}
\vspace{-30pt}
\end{center}
\caption{The general step} \label{fig:inductionstep}
\end{figure}

We need to analyse what happens at $\tau = \tau_{i+1}$. Two of the arcs of the quadrilateral become one and as such the quadrilateral collapses to a triangle. More precisely, the point $a(\tau_{i+1})$ lies on an arc of $D_{\tau_i}$ so divides this arc into two arcs in $D_{\tau_{i+1}}$; adding this vertex turns the ``next" triangle into a quadrilateral. This proves the general step. The above process is illustrated in Figure \ref{fig:inductionstep}.

What remains to be seen is the final step, when $\tau\in [\tau_N,\tau_{N+1}]$. This final step is very similar to what happens before with the notable difference that the point $a(\tau_{N+1})$ was already a vertex of the decompositions $D_\tau$. So instead of splitting a previous arc into two parts, the quadrilateral containing $a(\tau)$ for $\tau\in [\tau_N,\tau_{N+1}[$ collapses completely leaving only triangles in $D_{\tau_{N+1}}$ (see Figure \ref{fig:finalstep}).

\begin{figure}[h]
\leavevmode \SetLabels
\L(.825*.06) $a(\tau_{N+1})$\\
\L(.285*.50) $a(\tau_N)$\\
\endSetLabels
\begin{center}
\AffixLabels{\centerline{\epsfig{file =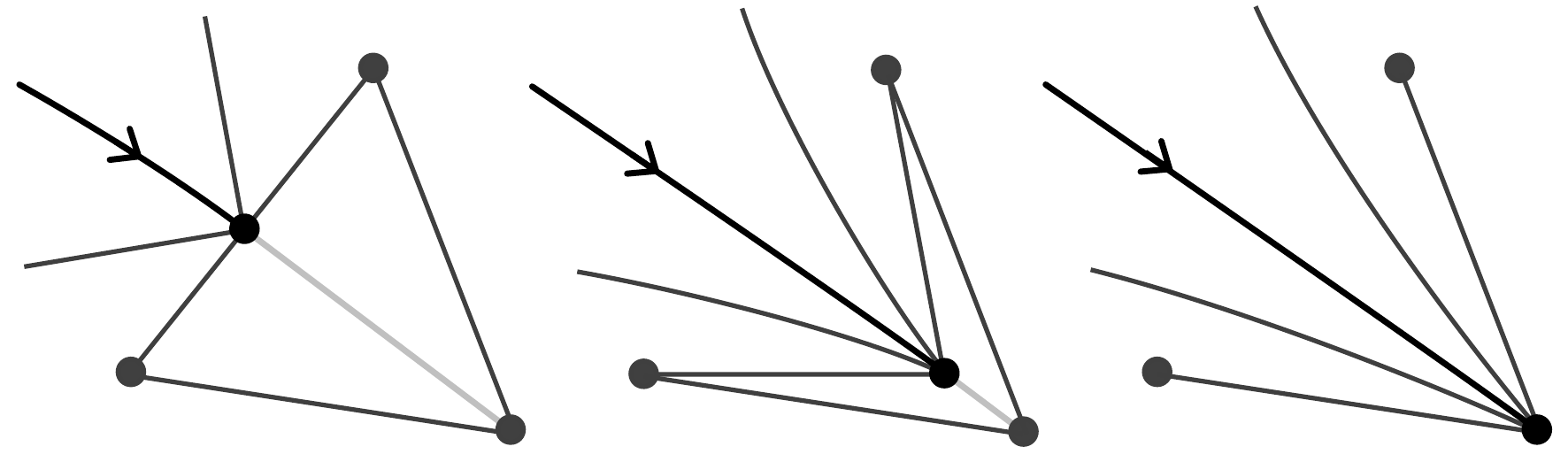,width=10.0cm,angle=0} }}
\vspace{-30pt}
\end{center}
\caption{The final step} \label{fig:finalstep}
\end{figure}

This concludes the proof that $\pi_{a^+}(T)$ is a triangulation.

It is straightforward to see that $\pi_{a^+}$ is a retraction. In fact, the restriction of $\pi_{a^+}$ to $\F_a$ is the identity and $\pi_{a^+}$ is onto by construction. 

Let us now prove that $\pi_{a^+}$ is 1-Lipschitz. Recall that $T_1, T_2 \in \F(\Sigma)$ differ by a flip if and only if $ i(T_1, T_2)=1$. By Lemma \ref{geo-strata:lemma} we have $ i(\pi_{a^+}(T_1),\pi_{a^+}(T_2))\leq 1 $. We deduce that either $\pi_{a^+}(T_1)$ and $\pi_{a^+}(T_2)$ also differ by a flip or they coincide. 
Let $T$ and $T'$ be two vertices in $\F(\Sigma)$ and $\gamma: T=T_0 \ldots T_m=T'$ is a geodesic path in $\F(\Sigma)$ joining them. 
By the above argument, $\pi_{a^+}(\gamma): \pi_{a^+}(T) \ldots \pi_{a^+}(T')$ is a 
path in $\F_a$ of length at most $m$, so $d(\pi_{a^+}(T), \pi_{a^+}(T')) \leq d(T,T')$ and $\pi_{a^+}$ is 1-Lipschitz.
\end{proof}

\begin{lemma}\label{pre-convexity}
Let $A$ be a multiarc and $T$ be a triangulation. If there exists $t \in T$ such that $ i(t,A) = 0$ then every geodesic path from $T$ to $\F_A$ 
is contained in $\F_t$.
\end{lemma}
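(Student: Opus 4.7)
\emph{Plan.} The natural tool is the $1$-Lipschitz retraction $\pi_{t^+} : \F(\Sigma) \to \F_t$ of Lemma~\ref{projection1}, applied to the arc $t$ with an arbitrary chosen orientation. Let $\gamma : T = T_0, T_1, \ldots, T_m$ be any geodesic from $T$ to some $T_m \in \F_A$, with $m = d(T, \F_A)$. The strategy is to show that if $\gamma$ ever leaves $\F_t$, the projected path $\pi_{t^+}(\gamma)$ still runs from $T$ to a point of $\F_A$ but has length strictly less than $m$, contradicting minimality.

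First I would check that $\pi_{t^+}(\gamma)$ connects $T$ to a point of $\F_A$ in at most $m$ steps. Since $T \in \F_t$, Lemma~\ref{projection1} gives $\pi_{t^+}(T_0) = T$. For the endpoint, every $a \in A$ satisfies $i(a,t) = 0$, so combing along $t^+$ fixes each arc of $A$; hence $A \subseteq T_m$ forces $A \subseteq \pi_{t^+}(T_m)$, so $\pi_{t^+}(T_m) \in \F_A$. The length bound is immediate from the $1$-Lipschitz property of $\pi_{t^+}$.

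The heart of the argument -- and the step I expect to require the most care -- is a local computation showing that the first exit from $\F_t$ is collapsed by $\pi_{t^+}$. Let $i$ be the smallest index with $T_i \notin \F_t$; then $T_{i-1} \in \F_t$, and the single flip joining $T_{i-1}$ and $T_i$ must act on $t$, so $T_i = (T_{i-1} \setminus \{t\}) \cup \{t'\}$, where $t'$ is the other diagonal of the quadrilateral $Q$ adjacent to $t$ in $T_{i-1}$. Every arc of $T_i \setminus \{t'\}$ is disjoint from $t$ and is therefore fixed by $\mathrm{push}_{t^+}$. Since $t'$ crosses $t$ exactly once, combing slides this crossing to the head of $t$, splitting $t'$ into two arcs inside $Q$, each with one endpoint at the head of $t$ and each isotopic to one of the two sides of $Q$ incident to that vertex. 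Those two sides of $Q$ already belong to $T_{i-1}$, so $\mathrm{push}_{t^+}(T_i)$ has the same set of isotopy classes as $T_{i-1} \setminus \{t\}$, and re-adjoining $t$ confirms $\pi_{t^+}(T_i) = T_{i-1} = \pi_{t^+}(T_{i-1})$. The edge $T_{i-1} T_i$ therefore collapses under projection, so $\pi_{t^+}(\gamma)$ has length at most $m - 1$ from $T$ to $\pi_{t^+}(T_m) \in \F_A$, contradicting $d(T, \F_A) = m$. Hence no such index $i$ exists, and $\gamma$ stays in $\F_t$.
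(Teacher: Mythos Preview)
Your proof is correct and follows essentially the same route as the paper: both use the $1$-Lipschitz retraction $\pi_{t^+}$ of Lemma~\ref{projection1}, verify that it sends the endpoints of $\gamma$ into $\F_A$, and then argue that the first flip removing $t$ collapses under $\pi_{t^+}$, contradicting minimality. Your version is in fact more explicit where the paper simply writes ``by construction $\pi_{t^+}(T_i)=\pi_{t^+}(T_{i+1})=T_i$'': you spell out why combing $t'$ along $t^+$ produces two arcs isotopic to sides of $Q$ already present in $T_i$, which is exactly the content behind that phrase.
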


\begin{proof}
Let $\gamma: T=T_0 \ldots T_n$ be a shortest path from $T_0$ to $\F_A$. We shall prove that for all $i$, $T_i \in \F_t$.
We begin by choosing an orientation on $t$. Observe that $p_{t^+}(T_n) \in \F_A$ and $T_0 = \pi_{t^+}(T_0)$ by construction, so $\pi_{t^+}(\gamma)$ is also a path from $T_0$ to $\F_A$.
We now argue by contradiction. Let $i \geq 0$ the smallest integer such that $t \in T_i$ and $t \not \in T_{i+1}$ (that is, the arc $t$ is flipped). Necessarily we have $i(t, T_{i+1}) = 1$ and by construction 
$$\pi_{t^+}(T_i) = \pi_{t^+}(T_{i+1}) = T_i$$
so the length of $\pi_{t^+}(\gamma)$ is at most $n-1$. This implies that $\pi_{t^+}(\gamma)$ is shorter than $\gamma$, in contradiction with the assumption that $\gamma$ is geodesic. 
\end{proof}

\begin{theorem}\label{convexity-1}
For every arc $a$, the stratum $\F_a$ is strongly convex. 
\end{theorem}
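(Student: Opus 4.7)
The plan is to deduce the theorem directly from Lemma \ref{pre-convexity}, which has already done the heavy lifting via the push/projection map $\pi_{a^+}$.

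First I would take two arbitrary triangulations $S, T \in \F_a$ and a geodesic $\gamma$ between them in $\F(\Sigma)$; the goal is to show every triangulation on $\gamma$ contains $a$. The key observation is that $T$, being a triangulation, is itself a (maximal) multiarc, and the associated stratum $\F_T$ consists of the single vertex $T$. Hence any geodesic from $S$ to $T$ is the same as a geodesic from $S$ to $\F_T$.

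Next I would apply Lemma \ref{pre-convexity} with the role of the multiarc $A$ played by $T$ and the role of the arc $t$ played by $a$. The hypothesis of that lemma requires an arc $t \in S$ with $i(t,T)=0$: this is satisfied by $a$, since $a \in S$ (as $S \in \F_a$) and $a \in T$ (as $T \in \F_a$), so in particular $i(a,T) = 0$. The conclusion of the lemma then says that every geodesic from $S$ to $\F_T = \{T\}$ lies in $\F_a$, which is exactly the desired statement for $\gamma$.

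Since $S$ and $T$ were arbitrary points of $\F_a$, this shows every geodesic between two points of $\F_a$ is entirely contained in $\F_a$, i.e.\ $\F_a$ is strongly convex. There is no real obstacle at this stage: the genuine work was in constructing the Lipschitz retraction $\pi_{a^+}$ in Lemma \ref{projection1} and leveraging it in Lemma \ref{pre-convexity}; the theorem is a clean corollary obtained by specializing the multiarc to a full triangulation.
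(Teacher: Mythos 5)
Your proof is correct and rests on exactly the same machinery as the paper's --- the projection $\pi_{a^+}$ packaged as Lemma~\ref{pre-convexity} --- but your literal invocation of that lemma is cleaner than the paper's. The paper says it applies Lemma~\ref{pre-convexity} ``with $A=a$ and $t=a$,'' which taken at face value is vacuous: since $T_0 \in \F_a$ we have $d(T_0,\F_a)=0$, so the lemma says nothing about a geodesic from $T_0$ to another vertex $T_m$. What the paper really needs is the argument inside the lemma's proof (project $\gamma$ by $\pi_{a^+}$, which fixes $T_0$ and $T_m$ and strictly shortens the path if $a$ is ever flipped). Your move of taking $A$ to be the \emph{entire target triangulation} $T$, so that $\F_T=\{T\}$ and a geodesic from $S$ to $T$ is literally a geodesic from $S$ to $\F_T$, makes $S$ and $T$ fit the lemma's hypotheses verbatim with $t=a$ (since $a\in S$, $a\in T$, hence $i(a,T)=0$), and the conclusion ``every such geodesic lies in $\F_a$'' is exactly strong convexity. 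The only content the paper states that you omit is that $\F_a\hookrightarrow\F(\Sigma)$ is an isometric embedding, but that is an immediate consequence of strong convexity, so nothing is lost.
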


\begin{proof}
Let $T_0$ and $T_m$ be two vertices in $\F_a$ and let $\gamma: T_0 \ldots T_m$ be a geodesic path in $\F(\Sigma)$ joining them. By Lemma \ref{projection1} $\pi_{a^+}(\gamma) $ is a path in $\F_a$ with endpoints $T_0$ and $T_m$ and we have $d_a(T_0, T_m) \leq m = d(T_0,T_m) $. It follows that the inclusion $\F_a \hookrightarrow \F(\Sigma)$ is an isometric embedding. 
The strong convexity of $\F_a$ follows from Lemma \ref{pre-convexity} with $A = a$ and $t=a$: for all $i=0, \ldots, m$ we have $T_i \in \F_a$. 
\end{proof}

\begin{theorem}\label{projection2}
Let $A^\sigma= (a_1^+, \ldots, a_m^+)$ be a multiarc whose $m$ components are enumerated and oriented. 
The map $\pi_{A^\sigma} = \pi_{a_m^+} \circ \ldots \circ \pi_{a_1^+}: \F(\Sigma) \to \F_A$ is well-defined and a 1-Lipschitz retraction.
\end{theorem}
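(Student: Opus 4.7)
The plan is to proceed by induction on $m$, the number of arcs in $A$. The base case $m=1$ is exactly Lemma \ref{projection1}, so the task is to carry out the inductive step, and the only real subtlety is to verify that each successive projection does not destroy the arcs that have already been ``installed'' by the previous projections. The key observation is that the components $a_1,\ldots,a_m$ of a multiarc are pairwise disjoint by definition, so $i(a_i,a_j)=0$ for all $i\neq j$, and therefore combing one component along another has no effect.

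Concretely, let me denote $\pi_k = \pi_{a_k^+}\circ\cdots\circ\pi_{a_1^+}$, and prove by induction on $k$ that $\pi_k$ takes values in $\F_{\{a_1,\ldots,a_k\}}$. For $k=1$ this is Lemma \ref{projection1}. Assume the result holds for $k$ and fix a triangulation $T$. By the inductive hypothesis $\pi_k(T)\in\F_{\{a_1,\ldots,a_k\}}$, and applying $\pi_{a_{k+1}^+}$ gives a triangulation containing $a_{k+1}$ by the very definition of the projection. So I only need to check that the arcs $a_1,\ldots,a_k$ survive this last projection. For each $i\leq k$, since $a_i$ belongs to $\pi_k(T)$ and lies in $A$, it is disjoint from $a_{k+1}$, hence $i(a_i,a_{k+1})=0$. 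By the first bullet in the definition of $\mathrm{push}_{a_{k+1}^+}$, we get $\mathrm{push}_{a_{k+1}^+}(a_i)=a_i$, so $a_i$ appears in $\pi_{k+1}(T)$. This completes the induction and shows that $\pi_{A^\sigma}=\pi_m$ indeed lands in $\F_A$.

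For the 1-Lipschitz property, I would simply note that each factor $\pi_{a_i^+}$ is 1-Lipschitz by Lemma \ref{projection1}, and the composition of 1-Lipschitz maps is 1-Lipschitz. For the retraction property, I need to show that $\pi_{A^\sigma}$ restricts to the identity on $\F_A$. Let $T\in\F_A$; then for each $i$, the arc $a_i$ belongs to $T$, so every other arc $t\in T$ is disjoint from $a_i$, giving $\mathrm{push}_{a_i^+}(t)=t$ and hence $\pi_{a_i^+}(T)=T$. Iterating, $\pi_{A^\sigma}(T)=T$.

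The only potential obstacle is the verification that the previously installed arcs are stable under later projections, but this is immediate from the disjointness of the components of a multiarc and the explicit description of $\mathrm{push}_{a^+}$ on arcs disjoint from $a$. No new geometric input beyond Lemma \ref{projection1} is required, and in particular the choice of enumeration $\sigma$ does not affect whether the map is well-defined, although it may well affect the map itself.
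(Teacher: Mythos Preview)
Your proof is correct and follows essentially the same route as the paper: both arguments rest on the pairwise disjointness of the arcs $a_1,\ldots,a_m$ together with the first bullet in the definition of $\mathrm{push}_{a^+}$, which guarantees that each $\pi_{a_{k+1}^+}$ fixes the already-installed arcs $a_1,\ldots,a_k$. The paper compresses this into one line and cites Lemma~\ref{pre-convexity}, whereas you spell out the induction on $k$ explicitly; the content is the same.
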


\begin{proof}
Since the arcs in $A$ are all disjoint, by Lemma \ref{pre-convexity} we have
$$\pi_{A^\sigma}(\F(\Sigma)) = \F_{a_1} \cap \ldots \cap \F_{a_m} = \F_A.$$
By Lemma \ref{projection1} the map $\pi_{A^\sigma}$ is 1-Lipschitz and a retraction. 
\end{proof} 

We remark that the map $\pi_{A^\sigma}$ does depend on the choice of the orientation and enumeration of the arcs in $A$. We will study this dependence later. 

\begin{theorem}\label{convexity-2}
For every multiarc $A$, the stratum $\F_A$ is strongly convex. 
\end{theorem}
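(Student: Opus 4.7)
The plan is to deduce the multiarc case directly from the single-arc case (Theorem \ref{convexity-1}) by intersecting strata, which makes Theorem \ref{projection2} unnecessary for this particular statement.

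The setup is as follows. Fix two triangulations $T_0, T_m \in \F_A$ and let $\gamma : T_0, T_1, \ldots, T_m$ be any geodesic in $\F(\Sigma)$ joining them. My goal is to show $T_i \in \F_A$ for every $i$, i.e., that every arc $a \in A$ is contained in every $T_i$.

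For this, I would fix an arbitrary arc $a \in A$. Since $A \subset T_0$ and $A \subset T_m$, both endpoints of $\gamma$ lie in the single-arc stratum $\F_a$. By Theorem \ref{convexity-1}, $\F_a$ is strongly convex in $\F(\Sigma)$, so the entire geodesic $\gamma$ is contained in $\F_a$. In particular $a \in T_i$ for every $i = 0, \ldots, m$. Since this holds for each $a \in A$, I conclude
\[
T_i \in \bigcap_{a \in A} \F_a = \F_A \qquad \text{for all } i = 0, \ldots, m,
\]
which is exactly the strong convexity of $\F_A$.

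There is no real obstacle here: all of the work was already done in proving Theorem \ref{convexity-1}, and the extension to multiarcs is essentially formal once one observes that the stratum $\F_A$ is the intersection of the single-arc strata $\F_a$ for $a \in A$, and that strong convexity is trivially preserved under intersection (any geodesic between two points of the intersection lies in each strongly convex piece, hence in the intersection). As a byproduct, combining this with the 1-Lipschitz retraction of Theorem \ref{projection2}, the inclusion $(\F_A, d_A) \hookrightarrow (\F(\Sigma), d)$ is an isometric embedding, recovering the statement announced at the start of the section.
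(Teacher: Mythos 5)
Your proof is correct and is essentially the same as the paper's: the paper also observes that $\F_A = \bigcap_{a \in A} \F_a$ and deduces strong convexity of $\F_A$ from Theorem \ref{convexity-1} together with the fact that an intersection of strongly convex subsets is strongly convex. You have simply unwound that one-line argument into its elementary steps.
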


\begin{proof}
This is a direct corollary of Theorem \ref{convexity-1}. Note that $\F_A = \bigcap_{a \in A} \F_a$ and the intersection of strongly convex subspaces is strongly convex.

\end{proof}

\subsection{Applications}\label{ss:applications}

We now focus on some applications of the above results and in particular of Theorem \ref{convexity-2}. 

\subsubsection{Projections and distances}

We begin by looking at some immediate consequences on distances and projection distances to strata. 

The following proposition is essentially the definition of distance on $\F_A$ combined with Theorem \ref{convexity-2}. 

\begin{proposition}\label{sep-multiarc}
 Assume that $A$ is a multiarc such that ${\Sigma \setminus A} = \bigcup_{i=1}^h \Sigma_i$ where $\Sigma_i$ is a connected surface with boundary. Denote by $d_i$ the distance on $\F(\Sigma_i)$. For every $T \in \F_A$ denote by $T_i$ 
 the triangulation of $\Sigma_i$ induced by $T$. Then the map 
 \begin{align*} 
 \F_A &\longrightarrow ~\F(\Sigma_1) \times \ldots \times \F(\Sigma_h) \\
 T & \mapsto (T_1 , \ldots, T_h) 
 \end{align*}
is an isometry between $(\F_A, d)$ and $(\F(\Sigma_1) \times \ldots \times \F(\Sigma_h)~,~ d_1 + \ldots + d_h )$ 
\end{proposition}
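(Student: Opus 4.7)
The approach is to leverage the strong convexity of $\F_A$ established in Theorem \ref{convexity-2}. The bijectivity of the map has already been addressed in the preliminaries: cutting $\Sigma$ along the separating multiarc $A$ produces the components $\Sigma_i$, any triangulation $T \in \F_A$ is uniquely determined by the triangulations $T_i$ it induces on each $\Sigma_i$, and conversely any tuple $(T_1,\ldots,T_h)$ glues along $A$ to produce a triangulation of $\Sigma$ containing $A$. So the map is a well-defined bijection; what remains is to verify that it is distance-preserving.

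To prove the upper bound $d(T,T') \leq d_1(T_1,T'_1) + \ldots + d_h(T_h,T'_h)$, I would pick a geodesic path in each $\F(\Sigma_i)$ from $T_i$ to $T'_i$. Each flip in $\F(\Sigma_i)$ lifts to a flip in $\F(\Sigma)$ that preserves $A$ pointwise, since the arc being flipped lies in the interior of $\Sigma_i$ and is hence disjoint from $A$. Concatenating these lifted paths (in any order) produces a path in $\F_A$ from $T$ to $T'$ whose length is precisely $\sum_{i=1}^h d_i(T_i,T'_i)$.

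For the reverse inequality $d(T,T') \geq \sum_{i=1}^h d_i(T_i,T'_i)$, I would invoke Theorem \ref{convexity-2}: any geodesic $T = S_0, S_1, \ldots, S_n = T'$ in $\F(\Sigma)$ stays entirely inside $\F_A$, and so every flip along the path is performed on an arc disjoint from $A$, which therefore lies in the interior of exactly one component $\Sigma_j$. Letting $n_i$ denote the number of flips that occur inside $\Sigma_i$, we have $n = n_1 + \ldots + n_h$. Projecting the path to $\F(\Sigma_i)$ by reading off only the triangulations induced on $\Sigma_i$ gives a sequence where consecutive entries either coincide (when the flip happened in some $\Sigma_j$ with $j \neq i$) or differ by a single flip. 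This projection yields a path in $\F(\Sigma_i)$ from $T_i$ to $T'_i$ of length exactly $n_i$, so $n_i \geq d_i(T_i,T'_i)$. Summing over $i$ gives $d(T,T') = n \geq \sum_{i=1}^h d_i(T_i,T'_i)$.

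The two inequalities together yield the required isometry. There is no significant obstacle here once strong convexity is in hand; the main conceptual point is that inside $\F_A$ the flips decompose cleanly according to the components of $\Sigma \setminus A$, so that path length becomes additive across the factors.
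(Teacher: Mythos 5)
Your proof is correct and follows essentially the same route as the paper's. The paper compresses the argument into two sentences: the map is by definition an isometry for the intrinsic metric $d_A$ (which is what your lifting/concatenation argument and your flip-decomposition argument establish explicitly), and $d = d_A$ on $\F_A$ follows from the 1-Lipschitz retraction of Theorem \ref{projection2}, which you instead get from the strong convexity statement of Theorem \ref{convexity-2}; the two are interchangeable here since strong convexity implies the isometric embedding you need.
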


\begin{proof}
By definition of $\F_A$, the map is an isometry from $(\F_A, d_A)$. By Theorem \ref{projection2} $d = d_A$ and the assertion follows. 
\end{proof}

\begin{proposition}
Let $A$ be a multiarc. For every choice $\sigma$ of enumeration and orientation of the arcs in $A$, we have: 
$d(\pi_{A^\sigma}(T), \pi_{A^\sigma}(S)) \leq i(T,S)$. 
\end{proposition}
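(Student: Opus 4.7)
The plan is to combine two facts that are already established in the paper: that $\pi_{A^\sigma}$ is $1$-Lipschitz, and that flip distance is bounded above by intersection number. The first fact is precisely Theorem \ref{projection2}, which yields
\[
d(\pi_{A^\sigma}(T), \pi_{A^\sigma}(S)) \leq d(T,S)
\]
for any $T,S \in \F(\Sigma)$, independently of the choice of enumeration and orientation $\sigma$. The second fact is the corollary to Theorem \ref{our-algo}, which gives $d(T,S) \leq i(T,S)$. Chaining these two inequalities immediately yields the desired bound.

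So the proof will just be two lines: apply Theorem \ref{projection2} to get the $1$-Lipschitz inequality, then apply the intersection upper bound on flip distance. There is no obstacle here; the result is a formal consequence of the two building blocks, and in particular does not depend on $\sigma$ because neither ingredient does.

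One could alternatively give a more ``intrinsic'' proof by iterating Lemma \ref{geo-strata:lemma}, which states $i(\mathrm{push}_{a^+}(s),\mathrm{push}_{a^+}(t)) \leq i(s,t)$, across the components of $A^\sigma$ to conclude $i(\pi_{A^\sigma}(T),\pi_{A^\sigma}(S)) \leq i(T,S)$, and then invoke the intersection-bounds-distance corollary inside $\F_A$. However this is strictly longer and contains the same information, so I would opt for the two-line version.
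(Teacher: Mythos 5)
Your primary two-line argument is correct: Theorem \ref{projection2} makes $\pi_{A^\sigma}$ a $1$-Lipschitz retraction, so $d(\pi_{A^\sigma}(T),\pi_{A^\sigma}(S))\leq d(T,S)$, and the corollary of Theorem \ref{our-algo} gives $d(T,S)\leq i(T,S)$; chaining yields the claim. What the paper actually does, however, is the route you sketch as the ``alternative'': it cites Lemma \ref{geo-strata:lemma} to get $i(\pi_{A^\sigma}(T),\pi_{A^\sigma}(S))\leq i(T,S)$ by iterating the push operation over the arcs of $A^\sigma$, and then invokes the intersection-bounds-distance machinery (it references \ref{dist-up}, though the relevant statement is really the corollary $d(T,S)\leq i(T,S)$). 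So your main proof and the paper's proof are genuinely distinct, if only mildly so: yours factors through the already-established Lipschitz property of the full projection map, whereas the paper stays one level lower and argues purely at the level of intersection numbers via the building-block combing lemma, never needing Theorem \ref{projection2} at all. Both are sound; your version is arguably the more economical given what has been proved by that point in the paper, while the paper's version is self-contained modulo only Lemma \ref{geo-strata:lemma} and the upper bound on distance. Your dismissal of the intrinsic route as ``strictly longer'' is slightly unfair --- it is comparably short --- but either is acceptable.
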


\begin{proof}
It is a straightforward application of Lemmas \ref{dist-up} and \ref{geo-strata:lemma}.
\end{proof}

\begin{proposition}\label{sigma}
Let $A$ be a multiarc. For every choice $\sigma$ of enumeration and orientation of the arcs in $A$, we have
$d(T, \F_A) \leq d(T, \pi_{A^\sigma}(T)) \leq 2\cdot d(T, \F_A) .$
\end{proposition}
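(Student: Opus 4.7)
The left inequality is immediate: since $\pi_{A^\sigma}(T) \in \F_A$ by Theorem~\ref{projection2}, we have $d(T,\F_A) \leq d(T,\pi_{A^\sigma}(T))$ by definition of the distance to a set.

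For the right inequality, the plan is to exploit two key properties of $\pi_{A^\sigma}$ established earlier, namely that it is a \emph{1-Lipschitz retraction} onto $\F_A$ (Theorem~\ref{projection2}). First I would pick a triangulation $S \in \F_A$ realizing the distance to the stratum, i.e., $d(T,S) = d(T,\F_A)$; such an $S$ exists since $\F_A$ is a nonempty subgraph of a locally finite graph. Since $S \in \F_A$ and $\pi_{A^\sigma}$ is a retraction, $\pi_{A^\sigma}(S) = S$. Combined with the 1-Lipschitz property, this gives
$$d(\pi_{A^\sigma}(T), S) = d(\pi_{A^\sigma}(T), \pi_{A^\sigma}(S)) \leq d(T,S) = d(T,\F_A).$$

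Finally, the triangle inequality yields
$$d(T, \pi_{A^\sigma}(T)) \leq d(T,S) + d(S, \pi_{A^\sigma}(T)) \leq d(T,\F_A) + d(T,\F_A) = 2\,d(T,\F_A),$$
which is the desired bound. There is no genuine obstacle here; the argument is essentially a formal consequence of $\pi_{A^\sigma}$ being a 1-Lipschitz retraction, and the factor $2$ is the standard loss one pays when comparing the distance from a point to a convex set with the distance to a particular retraction of that point, since the retraction map is not required to realize the nearest-point projection.
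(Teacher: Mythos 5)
Your proof is correct and takes essentially the same route as the paper: pick a nearest point $S\in\F_A$, use $\pi_{A^\sigma}(S)=S$ and the $1$-Lipschitz property of $\pi_{A^\sigma}$ to bound $d(S,\pi_{A^\sigma}(T))$, then apply the triangle inequality. You also supply the (trivial but worth stating) left inequality, which the paper leaves implicit.
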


\begin{proof}
Let $S$ be a triangulation in $\F_A$ at minimal distance from $T$, so that $d(T,S) = d(T,\F_A)$. By Theorem \ref{projection2} $\pi_{A^\sigma}(S)=S$, it follows: 
\begin{align*}
d(T, \pi_{A^\sigma}(T)) &\leq d(T,S) + d(S, \pi_{A^\sigma}(T)) \\ 
& \leq d(T,S) + d(\pi_{A^\sigma}(S), \pi_{A^\sigma}(T)) \\
& \leq d(T,S) + d(S,T) \\ 
& = 2 d(T, \F_A).
\end{align*}
\end{proof}

\begin{corollary}
Let $A$ be a multiarc. For every choice $\sigma, \epsilon$ of enumeration and orientations of the arcs in $A$, we have $d(\pi_{A^\sigma}(T), \pi_{A^\epsilon}(T)) \leq d(T, \F_A)$.
\end{corollary}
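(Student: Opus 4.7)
The plan is to fix a nearest point $S \in \F_A$ to $T$, so that $d(T,S) = n := d(T, \F_A)$, together with a geodesic $\gamma \colon T = T_0, T_1, \ldots, T_n = S$ in $\F(\Sigma)$ realizing this distance. Applying the two $1$-Lipschitz retractions $\pi_{A^\sigma}, \pi_{A^\epsilon}$ of Theorem~\ref{projection2} to $\gamma$ produces two paths in $\F_A$: the sequence $X = \pi_{A^\sigma}(T_0), \pi_{A^\sigma}(T_1), \ldots, \pi_{A^\sigma}(T_n) = S$ and the analogous sequence $Y = \pi_{A^\epsilon}(T_0), \pi_{A^\epsilon}(T_1), \ldots, \pi_{A^\epsilon}(T_n) = S$. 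Each such path has length at most $n$ and terminates at the common point $S$, since both retractions act as the identity on $\F_A$.

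Concatenating the two paths through $S$ only yields the naive bound $d(X,Y)\le 2n$; to sharpen it to $n$, I will argue by a synchronized comparison. Introduce the step-wise distance function
\[
\varphi(i) := d\bigl(\pi_{A^\sigma}(T_i), \pi_{A^\epsilon}(T_i)\bigr),\qquad i=0,1,\ldots,n,
\]
so that $\varphi(0)=d(X,Y)$ and $\varphi(n)=0$. The key claim is $|\varphi(i+1) - \varphi(i)| \le 1$ at every step; telescoping then gives $d(X,Y) = \varphi(0) \le n = d(T,\F_A)$.

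For the per-step analysis, let $t\in T_i$ be the arc flipped at step $i$, replaced by $t'\in T_{i+1}$. If both $t$ and $t'$ are disjoint from $A$ then each combing acts as the identity on these arcs; the two projected triangulations both contain $t$ before the step and $t'$ afterwards, so strong convexity of the stratum $\F_t$ (Theorem~\ref{convexity-1}) lets me transport a geodesic between $\pi_{A^\sigma}(T_i)$ and $\pi_{A^\epsilon}(T_i)$ inside $\F_t$ to a path of equal length between $\pi_{A^\sigma}(T_{i+1})$ and $\pi_{A^\epsilon}(T_{i+1})$, giving $\varphi(i+1)=\varphi(i)$. When at least one of $t, t'$ crosses $A$, the strategy is to inspect the local effect of the push operation on the flipped quadrilateral and show that the two combings cannot simultaneously produce distinct nontrivial flips in the two projected sequences: at most one of the two paths $\gamma_\sigma, \gamma_\epsilon$ advances nontrivially at step $i$, so $|\varphi(i+1)-\varphi(i)|\le 1$ by the triangle inequality.

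The principal obstacle lies in this last step: verifying that $\mathrm{push}_{a^+}$ and $\mathrm{push}_{a^-}$, applied along the various arcs of $A$, cannot coordinate to force independent flips in both projections at the same step of $\gamma$. Carrying this out will require a detailed case analysis tracking how the combed arcs $\pi_{A^\sigma}(t), \pi_{A^\sigma}(t'), \pi_{A^\epsilon}(t), \pi_{A^\epsilon}(t')$ fit into the quadrilateral of the flip, using Lemma~\ref{geo-strata:lemma} to control intersection changes and the strong convexity of $\F_A$ (Theorem~\ref{convexity-2}) to convert the local synchronization into the desired global distance bound.
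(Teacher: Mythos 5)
Your telescoping set-up is fine as logic (if $|\varphi(i+1)-\varphi(i)|\le 1$ for every $i$, then $\varphi(0)\le n$ because $\varphi(n)=0$), but that per-step estimate is precisely the content of the statement, and you have not proved it. The $1$-Lipschitz property of $\pi_{A^\sigma}$ and $\pi_{A^\epsilon}$ only yields $|\varphi(i+1)-\varphi(i)|\le 2$, which telescopes to the weaker bound $2\,d(T,\F_A)$. Your proposed sharpening --- that ``at most one of the two projected paths advances nontrivially at step $i$'' --- is not true in general: already for $A$ a single arc $a$ with $\sigma=a^{+}$ and $\epsilon=a^{-}$, flipping an arc $t\in T_i$ with $i(t,a)>0$ to an arc $t'$ with $i(t',a)>0$ will in general change both $\mathrm{push}_{a^{+}}(T_i)$ and $\mathrm{push}_{a^{-}}(T_i)$, so both projected paths move at the same step. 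Even in the case you do treat, where $t$ and $t'$ are disjoint from $A$, both projections move, each by exactly the flip $t\mapsto t'$, and your ``transport'' argument is unjustified: strong convexity (Theorem \ref{convexity-1}) places a geodesic between $\pi_{A^\sigma}(T_i)$ and $\pi_{A^\epsilon}(T_i)$ inside $\F_t$, but it does not convert that geodesic into a path of the same length between $\pi_{A^\sigma}(T_{i+1})$ and $\pi_{A^\epsilon}(T_{i+1})$, which lie in $\F_{t'}$ (flipping $t$ in an intermediate triangulation of the geodesic need not produce $t'$). So the ``detailed case analysis'' you defer is not a routine verification; it is the entire proof, and your sketch gives no indication of how it would go.

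The paper's route is completely different and does not track a geodesic at all: the corollary is deduced directly from Proposition \ref{sigma}, using that each $\pi_{A^\sigma}$ is a $1$-Lipschitz retraction restricting to the identity on $\F_A$ (Theorem \ref{projection2}). Concretely, with $S\in\F_A$ a nearest point to $T$, one has $d(\pi_{A^\sigma}(T),S)=d(\pi_{A^\sigma}(T),\pi_{A^\sigma}(S))\le d(T,S)=d(T,\F_A)$, and likewise for $\epsilon$. Note that this soft Lipschitz/triangle-inequality reasoning on its own gives $2\,d(T,\F_A)$, which is also all your argument delivers with the correct per-step constant $2$; the improvement to the stated bound $d(T,\F_A)$ is exactly where an actual argument is required, and your proposal does not supply one.
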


\begin{proof}
 It follows immediately by Proposition \ref{sigma}. 
\end{proof}

The next consequence will use a result by Aramayona, Koberda and the second author about simplicial maps between flip graphs. To state the result we require the following notation: we say that a surface $\Sigma$ is exceptional if it is an essential subsurface of (and possibly equal to) a torus with at most two marked points, or a sphere with at most four marked points. In \cite{AKP}, it is proved that, for surfaces $\Sigma, \Sigma'$ with $\Sigma$ non-exceptional, all injective simplicial maps
$$
\phi: \F(\Sigma) \to \F(\Sigma')
$$
come from embeddings $\Sigma\to \Sigma'$ (that is $\Sigma$ is homeomorphic to a subsurface of $\Sigma'$). Note that it's obvious that you can construct simplicial maps this way; what's more surprising is that this is, provided your base surface is complicated enough, the only way such maps appear. Together with Theorem \ref{convexity-2}, the following is then immediate. 

\begin{corollary}
Suppose $\Sigma$ is non-exceptional, and let $\F(\Sigma) \to \F(\Sigma')$ be an injective simplicial map. Then $\F(\Sigma)$ is strongly convex inside of $ \F(\Sigma')$.
\end{corollary}

\subsubsection{On the large scale geometry of the mapping class group}

We now turn our attention to the large scale geometry of the mapping class group. 

\begin{lemma}\label{stabilizers}
Let $A$ be a multiarc and $\Stab (A)$ be the subgroup of $\MOD(\Sigma)$ that fixes the isotopy class of each arc in $A$. Then $\Stab (A)$ has a finite index subgroup isomorphic to $\MOD(\Sigma \setminus A)$. 
\end{lemma}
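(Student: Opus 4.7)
The plan is to exhibit a natural finite-index subgroup $\Stab^+(A)\leq \Stab(A)$ and show that it is isomorphic to $\MOD(\Sigma\setminus A)$ via a cut-and-glue correspondence. After fixing an orientation on each arc $a_i\in A$, I would define a homomorphism
$$\sigma:\Stab(A)\longrightarrow (\Z/2\Z)^{|A|}$$
which records, coordinate by coordinate, whether a given mapping class acts on $a_i$ as an orientation-preserving or an orientation-reversing self-homeomorphism of $a_i$. This sign is well-defined on isotopy classes because the endpoints of each $a_i$ are marked points of $\Sigma$, so any isotopy rel the marked-point set cannot alter how an $a_i$-preserving representative acts on the two ends. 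Set $\Stab^+(A):=\ker \sigma$; it has index at most $2^{|A|}$ in $\Stab(A)$ and is therefore of finite index.

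I would then construct an isomorphism $\Phi:\Stab^+(A)\to \MOD(\Sigma\setminus A)$ by cutting, together with an inverse $\Psi$ obtained by gluing. For $[\phi]\in \Stab^+(A)$, any representative restricts on each $a_i$ to an orientation-preserving self-homeomorphism fixing the two endpoints, hence is isotopic to the identity on $a_i$. Applying the isotopy extension theorem inductively along the arcs of $A$ yields an isotopic representative which fixes $A$ pointwise; such a homeomorphism descends to a self-homeomorphism of the cut surface $\Sigma\setminus A$ that preserves both the original marked points and those newly created on the boundary, defining $\Phi([\phi])\in \MOD(\Sigma\setminus A)$. Conversely, any homeomorphism of $\Sigma\setminus A$ preserving its marked points is compatible with the gluing identification reconstructing $\Sigma$, so it glues canonically to a homeomorphism of $\Sigma$ fixing $A$ pointwise; this defines $\Psi$, and formally $\Phi\circ\Psi=\mathrm{id}$ and $\Psi\circ\Phi=\mathrm{id}$.

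The main obstacle is the well-definedness of $\Phi$ on isotopy classes: two representatives in $\Stab^+(A)$ that both fix $A$ pointwise are, by hypothesis, related by an ambient isotopy of $\Sigma$ rel marked points, but a priori this isotopy need not fix $A$ pointwise throughout. I would resolve this by a parametrized isotopy extension argument: using that the space of embedded arc systems isotopic to $A$ with prescribed behaviour on endpoints and prescribed orientation is contractible, the original isotopy can be deformed to one fixing $A$ pointwise at every time, which then descends to a valid isotopy of $\Sigma\setminus A$ rel marked points. Once this is established, both the bijectivity of $\Phi$ and its group-homomorphism property reduce to a direct unwinding of the cut-and-glue construction.
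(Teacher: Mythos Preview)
Your approach is essentially the same as the paper's: define $\Stab^+(A)$ as the kernel of the orientation-sign homomorphism into $(\Z/2\Z)^{|A|}$, observe it has finite index, and identify it with $\MOD(\Sigma\setminus A)$ via cutting. The paper merely asserts this isomorphism is ``immediate'' and writes a short exact sequence, whereas you actually spell out the cut-and-glue maps and the isotopy-extension argument for well-definedness; your version is more detailed and in fact more careful (you correctly say the index is \emph{at most} $2^{|A|}$ rather than claiming surjectivity onto $(\Z/2\Z)^{|A|}$).
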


\begin{proof}
Assume that $A$ has $m$ connected components. Fix an orientation on each arc of $A$. It is immediate to see that the subgroup $\Stab^+ (A) < \Stab(A)$ consisting of the mapping classes that also fix the orientation of every arc in $A$ is isomorphic to 
$\MOD( \Sigma \setminus A)$, that is, the subgroup of the surface obtained cutting $\Sigma$ along $A$. The assertion follows from the exactness of the short sequence: 
$$ 1 \to \Stab^+(A) \to \Stab(A) \to \mathbb Z_2^m \to 1 .$$ 
\end{proof}

We can now prove the following. 

\begin{theorem}\label{proper}
For every vertex $T \in \F_A$, there is a commutative diagram: 
$$\xymatrix{
\F_A \ar@{^{(}->}[r] &\F(\Sigma) \\
\mathrm{Stab}(A) \ar[u]^{{\omega_T}_|} \ar@{^{(}->}[r] & \MOD(\Sigma) \ar[u]_{\omega_T} }
$$
where the inclusion $\F_A \hookrightarrow \F(\Sigma)$ is an isometry and the orbit map $\omega_T: \MOD(\Sigma) \to \F(\Sigma)$ restricts to a quasi-isometry ${\omega_T}_|: \Stab(A) \to \F_A $. 
Moreover, the inclusion $\mathrm{Stab}(A) \hookrightarrow \MOD(\Sigma)$ is a quasi-isometric embedding. 
\end{theorem}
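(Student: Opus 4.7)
The plan is to verify the four assertions contained in the statement: commutativity, the top map is an isometric embedding, the left vertical map is a quasi-isometry, and the bottom inclusion is a quasi-isometric embedding. The first two follow essentially for free from the setup and from Theorem \ref{convexity-2}; the real work goes into the third; the fourth will then come out by chasing the diagram.

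First I would observe that the diagram commutes by construction. If $g\in \Stab(A)$ and $T\in \F_A$, then $g(T)$ is a triangulation containing $g(A)=A$, so $g(T)\in \F_A$; thus ${\omega_T}|{}$ is well defined and the square commutes trivially. The inclusion $\F_A\hookrightarrow \F(\Sigma)$ being an isometric embedding is exactly Theorem \ref{convexity-2}: since $\F_A$ is strongly convex, a geodesic in $\F(\Sigma)$ between two vertices of $\F_A$ is entirely contained in $\F_A$, so the intrinsic distance $d_A$ agrees with the restriction of $d$.

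The main step is showing that ${\omega_T}|:\Stab(A)\to \F_A$ is a quasi-isometry, and the plan is to invoke the \v{S}varc--Milnor lemma applied to the natural isometric action of $\Stab(A)$ on the connected graph $\F_A$ (connected by Corollary \ref{strata-conn}). Properness of this action is inherited from properness of the $\MOD(\Sigma)$-action on $\F(\Sigma)$ established inside the proof of Lemma \ref{mcg-flipgraph}: stabilizers of triangulations are finite by Alexander's lemma, and subgroups of finite groups are finite. The subtler point is cocompactness, and this is where I would use Lemma \ref{stabilizers}: up to finite index, $\Stab(A)$ is $\MOD(\Sigma\setminus A)$, and (as recalled in the preliminaries) $\F_A$ is canonically isomorphic to $\F(\Sigma\setminus A)$, or to the product of the flip graphs of the connected components in the separating case. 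The action of $\MOD(\Sigma\setminus A)$ on $\F(\Sigma\setminus A)$ is cocompact by the finiteness of the number of ways to glue $\tilde\kappa(\Sigma\setminus A)$ triangles, exactly as in Lemma \ref{mcg-flipgraph}; passing to a finite index subgroup and to a finite product preserves cocompactness. The main obstacle I foresee is this bookkeeping in the separating case, where one has to check that the product decomposition of $\F_A$ supplied by Proposition \ref{sep-multiarc} is equivariant for the appropriate finite index subgroup of $\Stab(A)$, so that the \v{S}varc--Milnor hypotheses really apply.

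Finally, for the quasi-isometric embedding $\Stab(A)\hookrightarrow \MOD(\Sigma)$, I would simply chase the commutative diagram. Denote by $\iota$ the bottom inclusion and by $j$ the top one. For any $g_1,g_2\in \Stab(A)$, commutativity gives
\[
d_{\F(\Sigma)}(\omega_T\iota(g_1),\omega_T\iota(g_2))=d_{\F(\Sigma)}(j\,\omega_T{}|{}(g_1),j\,\omega_T{}|{}(g_2))=d_{\F_A}(\omega_T{}|{}(g_1),\omega_T{}|{}(g_2)),
\]
where the second equality is the isometric embedding of $\F_A$. Comparing both sides using the two quasi-isometries $\omega_T$ (Lemma \ref{mcg-flipgraph}) and $\omega_T|{}$ (just proved), we conclude $d_{\MOD(\Sigma)}(g_1,g_2)\asymp d_{\Stab(A)}(g_1,g_2)$, i.e.\ $\iota$ is a quasi-isometric embedding.
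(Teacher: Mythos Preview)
Your proposal is correct and follows essentially the same route as the paper: use Theorem \ref{convexity-2} for the isometric embedding, invoke Lemma \ref{stabilizers} together with the identification $\F_A\cong \F(\Sigma\setminus A)$ (Proposition \ref{sep-multiarc}) to check cocompactness so that \v{S}varc--Milnor applies to $\Stab(A)\curvearrowright\F_A$, and then deduce the last statement by composing the maps in the diagram. You are in fact more explicit than the paper about commutativity, about properness being inherited, and about the equivariance bookkeeping in the separating case.
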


\begin{proof}
The inclusion $\F_A \hookrightarrow \F(\Sigma)$ is an isometry by Theorem \ref{convexity-2}. 
By Proposition \ref{sep-multiarc} $\F_A$ is isomorphic and isometric to $\F(\Sigma\setminus A)$. Since the action of $\MOD(\Sigma\setminus A)$ on $\F(\Sigma\setminus A)$ is cocompact, so it is the action of $\Stab(A)$ on $\F_A$ by Lemma \ref{stabilizers}. By the \v{S}varc-Milnor lemma the orbit map $ \MOD(\Sigma) \ni \psi \mapsto \psi T \in \F_A$ is a quasi-isometry. By composition the inclusion $\Stab(A) \hookrightarrow \MOD(\Sigma)$ is a quasi-isometric embedding.
\end{proof}

\section{The diameters of the modular flip graphs}

The goal of this section is to prove upper and lower bounds on the diameters of modular flip graphs in terms of the topology of the surface (namely Theorem \ref{thm:diameters} from the introduction). 

Let $\Sigma$ be a surface of genus $g$ with $n$ labelled points. We assume $g\geq 1$ and $n \geq 2$ (for the case $n=1$ see Theorem \ref{thm:onepunctureupper} and Remark \ref{rem:onepuncture}, for the case $g=0$ see Theorem \ref{thm:genuszeroupper}). The case where the points are unlabelled is slightly easier and it will also be treated separately - see Remark \ref{rem:unmarked}.

We begin with a general observation which allows us to break bounds on $\MF(\Sigma)$ into different parts. The idea is to work with the punctures on one side and genus on the other. To do this we consider triangulations that contain an arc which separates the genus from the punctures: more precisely an arc $a$ which forms a loop based in a puncture and such that $\Sigma \setminus a = \Omega \cup \Gamma$ where $\Omega$ is a disk with $n-1$ punctures and one labelled point on the boundary and $\Gamma$ is of genus $g$ with a boundary component with a single marked point. Such a loop we call {\it puncture separating}.

For any choice of puncture on $\Sigma$, it is clear that (infinitely many) such loops based in this point exist but up to homeomorphism there is only one such loop (see Figure \ref{fig:cutpunctures}). 

\begin{figure}[h]
\leavevmode \SetLabels
\endSetLabels
\begin{center}
\AffixLabels{\centerline{\epsfig{file =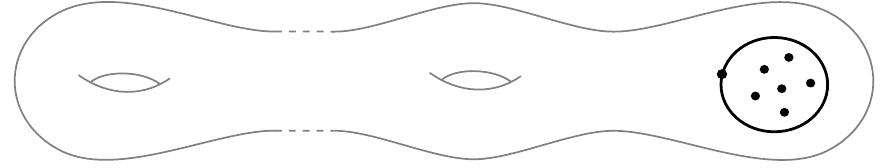,width=10.0cm,angle=0} }}
\vspace{-18pt}
\end{center}
\caption{A puncture cutting loop} \label{fig:cutpunctures}
\end{figure}

From this we can make the following observation: any two triangulations which are distinct up homeomorphism and both contain a puncture separating loop must be either distinct on $\Gamma$ or $\Omega$. As such:
\begin{equation}
\card\left( \MF(\Sigma) \right) > \card(\MF(\Gamma))\,\, \card(\MF(\Omega)).
\end{equation}\label{eq:card}
We will use this for our lower bounds in Section \ref{ss:lower}. 

For our upper bounds the following lemma will allow us to introduce a puncture separating arc in a minimal amount of flips. 

\begin{lemma}\label{lem:separatearc}
For any $T\in \MF(\Sigma)$ and any marked point $p$ of $\Sigma$, there exists a puncture separating loop $a$ based in $p$ with 
$$
i(a, T) \leq 2 (\kappa - n + 1)
$$
\end{lemma}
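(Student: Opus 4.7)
The plan is to realize $a$ as the boundary of a thickened spanning tree of the $1$-skeleton of $T$, modified so that $p$ lies on the boundary. Let $\tau$ be any spanning tree of the $1$-skeleton of $T$; since the $1$-skeleton is connected on $n$ vertices, $\tau$ has exactly $n-1$ edges. Let $D$ be a closed regular neighborhood of $\tau$ in $\Sigma$, taken thin enough that no arc of $T\setminus\tau$ enters the interior of $D$ except at its endpoints. Because $\tau$ is contractible, $D$ is a closed topological disk containing all $n$ marked points in its interior.

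Each arc $e\in T\setminus\tau$ has both endpoints in $\mathrm{int}(D)$ but its interior lying in $\Sigma\setminus D$, so $e$ crosses $\partial D$ exactly twice; tree arcs lie entirely in $D$ and do not cross $\partial D$. There are $\kappa-(n-1)=\kappa-n+1$ non-tree arcs, so $i(\partial D,T)=2(\kappa-n+1)$. Cutting $\Sigma$ along $\partial D$ yields the disk $D$ on one side and a genus-$g$ surface with one boundary component on the other.

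It remains to move $p$ from the interior of $D$ onto the boundary without creating new crossings with $T$. Since $n\geq 2$ and $\tau$ is connected, $p$ has positive degree in $\tau$, so some tree arc $e_1\in\tau$ is incident to $p$. Inside the thin strip of $D$ around $e_1$, pick an arc $\gamma$ from $p$ to a point $q\in\partial D$ running parallel to $e_1$; the strip contains no arc of $T$ other than $e_1$ (distinct tree arcs have disjoint strips and non-tree arcs avoid $\mathrm{int}(D)$), and $\gamma$ is parallel to $e_1$, hence $\gamma$ meets no arc of $T$. Take a sufficiently thin open tubular neighborhood $U$ of $\gamma$ inside the strip of $e_1$, chosen so that $\bar U$ touches $\partial D$ in a small arc about $q$ and pinches to the single point $p$ at its other end. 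Set $D':=D\setminus U$, a closed topological disk with $p$ on its boundary, and let $a:=\partial D'$. The new boundary segments (the two sides of $\bar U$ offset from $\gamma$) lie in the strip of $e_1$ and cross no arc of $T$, so $i(a,T)\leq i(\partial D,T)=2(\kappa-n+1)$. Since $D'$ is a disk with the remaining $n-1$ marked points in its interior and $p$ on its boundary, while $\Sigma\setminus\mathrm{int}(D')$ has genus $g$ and one boundary component through $p$, the loop $a$ is puncture separating.

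The delicate point in the argument is the basepoint adjustment: one must ensure the slitting/pinching step along $\gamma$ does not introduce new crossings with $T$, and this is controlled by confining the modifying tubular neighborhood to the strip of a single tree arc $e_1$ incident to $p$, in which the only arc of $T$ is $e_1$ itself.
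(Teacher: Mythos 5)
Your proposal is correct and follows essentially the same approach as the paper: take a spanning tree of $T$, bound a regular neighborhood by a curve of intersection number $2(\kappa-n+1)$, and then adjust to a loop based at $p$ without gaining crossings. The only cosmetic difference is in the basepoint adjustment, where the paper uses a connecting path $c$ inside a triangle incident to $p$ and forms $c\gamma c^{-1}$, whereas you slit $D$ along a path confined to the strip of a tree edge at $p$ -- both mechanisms are equivalent.
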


\begin{proof}
We think of $T$ as a graph embedded on $\Sigma$ and consider a spanning tree of this graph. A regular neighborhood of this tree is a simple closed curve $\gamma$ which satisfies 
$$
i(\gamma,T) \leq 2 ( \kappa - (n-1)) 
$$
as it intersects only half edges that do not belong to the tree and the tree has $n-1$ edges - see Figure \ref{fig:curve}. (The above inequality is in fact an equality but it is the inequality that we need.) 

\begin{figure}[h]
\leavevmode \SetLabels
\endSetLabels
\begin{center}
\AffixLabels{\centerline{\epsfig{file =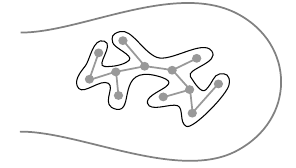,width=8.0cm,angle=0} }}
\vspace{-30pt}
\end{center}
\caption{The curve $\Gamma$} \label{fig:curve}
\end{figure}

From $\gamma$ and given a marked point $p$, we shall construct an arc as follows: as $\gamma$ surrounds all punctures, it must pass through a triangle that has $p$ as a vertex. We consider a simple arc $c$ in the triangle between $\gamma$ and $p$. Choosing an orientation on $c$ and $\gamma$, the concatenation of $c \gamma c^{-1}$ gives an isotopy class of arc which is the arc $a$ we are looking for. Notice that by construction it intersects $T$ in at most as many points as $\gamma$ and we have 
$$
i(a, T) \leq 2 ( \kappa - n +1) 
$$
as desired.

\begin{figure}[h]
\leavevmode \SetLabels
\L(.65*.85) $\gamma$\\
\L(.474*.61) $\tiny{c}$\\
\L(.65*.57) $a$\\
\endSetLabels
\begin{center}
\AffixLabels{\centerline{\epsfig{file =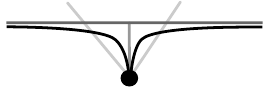,width=6.0cm,angle=0} }}
\vspace{-30pt}
\end{center}
\caption{The curve $\gamma$, the path $c$ and the arc $a$} \label{fig:arcs}
\end{figure}

\end{proof}
 
Using this lemma and the upper bound on flip distance in terms of intersection number, we can establish the following.

\begin{lemma}\label{lem:globalupper}
For $\Sigma$, $\Omega$ and $\Gamma$ as above:
$$
\diam\left( \MF(\Sigma) \right) \leq \diam\left( \MF(\Omega) \right)+ \diam\left( \MF(\Gamma) \right)+ 2(\kappa-n +1).
$$
\end{lemma}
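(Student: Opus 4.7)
The strategy is to use a puncture separating loop to cut $\Sigma$ into $\Omega$ and $\Gamma$, reducing a comparison between two classes in $\MF(\Sigma)$ to a comparison inside the stratum $\F_a$, which factors through the product $\MF(\Omega) \times \MF(\Gamma)$. The cost of reaching the stratum is controlled by Lemma \ref{lem:separatearc} and Corollary \ref{dist-up}.

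Given $[T], [T'] \in \MF(\Sigma)$, I would start by choosing representatives $T,T' \in \F(\Sigma)$. Lemma \ref{lem:separatearc} applied to each produces puncture separating loops $a_T, a_{T'}$ with $i(a_T,T), i(a_{T'},T') \leq 2(\kappa - n + 1)$. The key observation at this point is that up to homeomorphism there is only \emph{one} puncture separating loop, so $a_T$ and $a_{T'}$ lie in the same $\MOD(\Sigma)$-orbit; applying mapping classes to the representatives (which does not affect their $\MF$-classes) I would arrange matters so that both triangulations have intersection number at most $2(\kappa - n + 1)$ with a common fixed puncture separating loop $a$. Corollary \ref{dist-up} then yields triangulations $T_1, T'_1 \in \F_a$ with
$$d_{\F}(T, T_1) \leq 2(\kappa - n + 1), \qquad d_{\F}(T', T'_1) \leq 2(\kappa - n + 1),$$
so in particular $d_{\MF}([T],[T_1]), d_{\MF}([T'],[T'_1]) \leq 2(\kappa - n + 1)$.

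For the middle segment, by Proposition \ref{sep-multiarc} the inclusion $\F_a \hookrightarrow \F(\Sigma)$ is an isometric embedding and $\F_a$ is canonically isometric to $\F(\Omega) \times \F(\Gamma)$ with the $\ell^1$-product metric. The finite-index subgroup $\Stab^+(a) \cong \MOD(\Omega) \times \MOD(\Gamma) \leq \Stab(a)$ (Lemma \ref{stabilizers}) acts on $\F_a$ component-wise, so
$$d_{\MF(\Sigma)}([T_1],[T'_1]) \;\leq\; \inf_{\varphi \in \Stab^+(a)} d_{\F_a}(T_1, \varphi(T'_1)) \;=\; d_{\MF(\Omega)}\bigl([T_1|_\Omega],[T'_1|_\Omega]\bigr) + d_{\MF(\Gamma)}\bigl([T_1|_\Gamma],[T'_1|_\Gamma]\bigr),$$
which is bounded above by $\diam(\MF(\Omega)) + \diam(\MF(\Gamma))$. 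Stitching the three segments together with the triangle inequality yields the desired bound.

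The main obstacle is bookkeeping the constant. A naive execution of the plan pays the cost $2(\kappa - n + 1)$ \emph{twice}, once on each side, producing $4(\kappa - n + 1) + \diam(\MF(\Omega)) + \diam(\MF(\Gamma))$ rather than the stated $2(\kappa - n + 1) + \diam(\MF(\Omega)) + \diam(\MF(\Gamma))$. To recover the sharper constant one needs to pay the flip-to-stratum cost only once; the natural way to do this would be to choose a representative of $[T']$ already lying in $\F_a$ and absorb the second trip into the diameters of the pieces. This requires some care, since a generic class in $\MF(\Sigma)$ need not admit a representative in $\F_a$, and the precise accounting of how the $\MOD(\Sigma)$-transport interacts with the product structure of $\F_a \cong \F(\Omega) \times \F(\Gamma)$ is where the interesting work lies.
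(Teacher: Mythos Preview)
Your approach is exactly what the paper has in mind: it does not write out a proof but simply says ``Using this lemma and the upper bound on flip distance in terms of intersection number, we can establish the following'' and states the inequality. The intended argument is precisely the one you outline---flip both representatives into the stratum $\F_a$ using Lemma~\ref{lem:separatearc} and Corollary~\ref{dist-up}, then use the product decomposition of $\F_a$.

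You have also correctly diagnosed the bookkeeping issue: the argument as written naturally produces the additive term $4(\kappa-n+1)$, not $2(\kappa-n+1)$, because the cost of reaching the stratum is paid once for each triangulation. Your proposed fix (choosing the second representative already in $\F_a$) does not work, for the reason you give: a generic $\MOD(\Sigma)$-orbit need not meet $\F_a$, since not every homeomorphism type of triangulation contains a puncture separating loop. I do not see a way to recover the constant $2(\kappa-n+1)$ from these ingredients alone, and the paper's own later arguments (e.g.\ the proof of Theorem~\ref{thm:uppergenus}, where ``we flip both triangulations'' and the analogous cost $20g-4$ is doubled to $40g-8$) pay the stratum-entry cost twice. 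So the stated constant in Lemma~\ref{lem:globalupper} appears to be a minor slip; the correct output of the argument is
\[
\diam\bigl(\MF(\Sigma)\bigr) \;\leq\; \diam\bigl(\MF(\Omega)\bigr) + \diam\bigl(\MF(\Gamma)\bigr) + 4(\kappa - n + 1).
\]
This has no effect on Theorem~\ref{thm:diameters}, where only the order of growth matters, so you should simply note the discrepancy and proceed with the weaker constant.
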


The above inequality will allow us to treat the upper bounds by treating $\diam\left( \MF(\Gamma)\right)$ and $\diam\left( \MF(\Omega) \right)$ separately. We begin with the former.

\subsection{Upper bounds in terms of genus }

As above, $\Gamma$ is a genus $g\geq 1$ surface with a single boundary curve and a single marked point on the boundary. Our goal here is to show the following result.

\begin{theorem}\label{thm:uppergenus}
The diameter of the modular flip graph of $\Gamma$ satisfies
$$
\diam\left( \MF(\Gamma) \right) < A g \log (g+1)
$$
where $A$ can be taken to be $1000$.
\end{theorem}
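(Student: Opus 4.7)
The plan is strong induction on $g$ via a divide-and-conquer strategy inside $\F(\Gamma)$. For the base case $g=1$ we have $\kappa(\Gamma) = 4$ and $\tilde{\kappa}(\Gamma) = 3$, so $\MF(\Gamma)$ is a small finite graph whose diameter is easily bounded, and in particular is comfortably below $1000\log 2$.

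For the inductive step with $g \geq 2$ and a given $T \in \F(\Gamma)$, the plan is to produce a simple separating loop $a$ based at the marked point $p$ with two properties: first, cutting $\Gamma$ along $a$ yields two surfaces $\Gamma_1, \Gamma_2$ of the same topological type as $\Gamma$ (each having one boundary circle with a single marked point), with genera $g_1+g_2 = g$ and $\max(g_1,g_2) \leq \lceil 2g/3 \rceil$ --- an Euler characteristic count rules out the alternative distributions of marked points in the separating case, since they force a non-integer genus sum; and second, $i(T,a) \leq Cg$ for some explicit universal constant $C$. Given such an $a$, Corollary \ref{dist-up} produces a flip sequence of length at most $Cg$ from $T$ into $\F_a$; Theorem \ref{convexity-2} ensures $\F_a$ is strongly convex in $\F(\Gamma)$; and Proposition \ref{sep-multiarc} identifies $\F_a$ isometrically with $\F(\Gamma_1) \times \F(\Gamma_2)$ under the $\ell^1$ metric. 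Passing to the modular quotient and invoking the inductive hypothesis on each factor yields
\[
\diam(\MF(\Gamma)) \leq Cg + A\,g_1 \log(g_1+1) + A\,g_2 \log(g_2+1).
\]
Using the balance condition and the bound $g_i \log(g_i+1) \leq g_i \log(\lceil 2g/3 \rceil +1)$, the right-hand side is bounded above by $A\,g\log(g+1)$ whenever $A \log(3/2) \geq C$; the stated value $A=1000$ leaves ample slack (even for $C$ in the hundreds).

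The main obstacle is the construction of the loop $a$ satisfying both properties above. The spanning-tree technique of Lemma \ref{lem:separatearc} produces a simple loop at $p$ of intersection $O(g)$ with $T$, but this loop need not be separating, and when it is, the genus split need not be balanced. One natural refinement is to choose the spanning subtree of $T$ so that a regular-neighborhood thickening of the tree is a subsurface of $\Gamma$ carrying about half the total genus, in which case the boundary loop extracted from its complement separates $\Gamma$ in a balanced manner. Alternatively, one may exploit the freedom of the modular quotient: first flip $T$ into a triangulation containing a canonical maximal multiarc cutting $\Gamma$ into a $(4g+1)$-gon (itself an $O(g)$-cost step by intersection-number bounds), and then within this polygon rearrange a diagonal into the desired balanced separating position using the Sleator--Tarjan--Thurston/Pournin polygon-diameter bound $2(4g+1)-10 = 8g-8$ (Theorem \ref{th:STT}). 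Making either construction fully rigorous with explicit control on the intersection number is the technical heart of the proof.
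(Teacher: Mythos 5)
Your high-level divide-and-conquer plan matches the paper's strategy, but two of its ingredients are wrong in ways that matter. First, the topological claim that a single separating loop $a$ based at $p$ cuts $\Gamma$ into two pieces \emph{each} of the same type as $\Gamma$ (one boundary circle with a single marked point) is false, and the Euler characteristic count you invoke actually refutes it rather than supporting it: with $\tilde\kappa = 4g+4b+2s+p-6$, cutting preserves the total number of triangles, so if both pieces had one boundary circle and $m_1,m_2$ boundary marked points one gets $4g+3 = 4(g_1+g_2) + m_1 + m_2$, which forces $m_1+m_2 \equiv 3 \pmod 4$; the distribution $(1,1)$ gives $4(g_1+g_2)=4g+1$, a contradiction, whereas $(1,2)$ is the one that is consistent. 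Concretely, the piece containing $a_0$ inherits two copies of $p$ on its boundary circle. This is exactly why the paper cuts with \emph{two} arcs $b$ and $b'$ that, together with $a_0$, bound a triangle: that triangle absorbs the extra marked points and leaves two subsurfaces genuinely of the same type as $\Gamma$, so the induction closes.

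Second, neither of your two proposed constructions of the balanced separating loop with $i(T,a) = O(g)$ works as stated. The spanning-tree refinement is left entirely informal, and the ``flip $T$ into a canonical $(4g+1)$-gon decomposition'' step is not an $O(g)$-cost operation: the flip distance to a fixed canonical multiarc is bounded only by the intersection number $i(T,M)$, which is unbounded over $\MOD(\Gamma)$-orbits; if it \emph{were} $O(g)$ one would deduce $\diam\MF(\Gamma) = O(g)$, contradicting the paper's own $\Omega(g\log g)$ lower bound. The paper's actual mechanism sidesteps this: it uses Lemmas \ref{lem:uppergenus1} and \ref{lem:uppergenus2} to find, inside $T$ itself, roughly $g$ arcs whose removal leaves a subsurface $\overline\Gamma$ of genus about $g/2$ with one boundary circle, and then takes $b, b'$ \emph{parallel to the boundary of $\overline\Gamma$}. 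Because $b,b'$ run alongside arcs of $T$, each remaining arc of $T$ crosses them at most twice, giving the explicit bound $i(b,T)+i(b',T) \leq 20g-4$. That dependence of $b,b'$ on $T$ (rather than on a fixed canonical decomposition) is the key idea your proposal is missing.
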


Before proving the theorem we'll need two topological lemmas.

\begin{lemma}\label{lem:uppergenus1}
Let $T$ be a triangulation of $\Lambda$, a genus $g\geq 1$ surface with a single boundary curve and all $k$ marked points on the boundary. Then there exists $a \in T$ such that $\Lambda \setminus a$ is connected and of genus $g-1$. 
\end{lemma}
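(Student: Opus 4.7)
The plan is to use a dual-graph / Euler characteristic argument. First I would reformulate the problem: I claim it suffices to find an interior arc $a\in T$ which is non-separating in $\Lambda$, because on an orientable surface with a single boundary component, cutting along any non-separating arc with both endpoints on $\partial \Lambda$ automatically drops the genus by one (and splits the boundary into two components). Indeed, cutting along an arc changes the CW-count by $\Delta V=+2,\ \Delta E=+1,\ \Delta F=0$, so $\chi$ increases by $1$; combined with orientability and the fact that on a connected orientable surface a non-separating boundary-to-boundary arc must split the boundary component (else one would get $\chi'=2-2g'-b$ with $2g'=2g-1$, impossible), the resulting surface has $g'=g-1$, $b'=2$.

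Next I would translate ``non-separating arc'' into a statement about the dual graph $G$ of $T$, whose vertices are the triangles of $\Lambda\setminus T$ and whose edges are the interior arcs of $T$ (with boundary arcs of $\Lambda$ not contributing). A standard argument shows $\Lambda\setminus a$ is connected if and only if removing the edge of $G$ corresponding to $a$ leaves $G$ connected; equivalently, $a$ is non-separating iff that edge is not a bridge. Observe also that $G$ itself is connected because $\Lambda$ is connected and boundary arcs of $\Lambda$ are incident to a single triangle, so any partition of the triangles into two classes with no shared interior arc would force a splitting of $\Lambda$.

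I would then do the counting using the formulas from the preliminaries: with $b=1,\ s=0,\ p=k$ we have
\[
\tilde\kappa=4g+4+0+k-6=k+4g-2,\qquad \kappa=6g+3+0+k-6=k+6g-3.
\]
So $G$ has $V=k+4g-2$ vertices and $E=k+6g-3$ edges, whence
\[
E-(V-1)=2g-1\ \geq\ 1
\]
since $g\geq 1$. A connected graph with more edges than a spanning tree contains a cycle, and any edge of a cycle is non-bridge. Choosing $a\in T$ to correspond to such an edge furnishes a non-separating arc, and by the reformulation above $\Lambda\setminus a$ is connected of genus $g-1$.

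The only subtle point — and what I expect to be the main thing to write carefully — is the dichotomy in the first paragraph: that for an orientable surface with exactly one boundary component, a non-separating boundary-to-boundary arc always reduces the genus (rather than, say, leaving it unchanged while altering the boundary pattern). This is forced by orientability together with the parity of $\chi+1$, but the argument deserves a sentence or two to rule out the spurious solution $g'=g$, $b'=0$ of $2g'+b'=2g$ and to confirm that the two copies of $a$ cannot reattach to give a single boundary circle.
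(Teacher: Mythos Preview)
Your argument is correct, but it takes a noticeably different route from the paper's. The paper dispatches the existence of a non-separating arc in two lines: since $\Lambda\setminus T$ is a disjoint union of triangles, it has total genus $0$; but cutting along a separating arc never reduces total genus, so if every arc of $T$ were separating we would have $g(\Lambda)=0$, contradicting $g\geq 1$. Your approach instead passes to the dual graph and uses the count $E-(V-1)=\kappa-\tilde\kappa+1=2g-1\geq 1$ to produce a cycle and hence a non-bridge edge, then invokes the (easy) direction ``$a$ separating $\Rightarrow$ dual edge is a bridge'' to conclude. Both arguments rest on the same reformulation --- that on an orientable surface with a single boundary component a non-separating arc drops the genus by exactly one --- which the paper simply asserts and which you rightly flag as the point deserving a careful sentence. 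What your approach buys is an explicit quantitative statement (the first Betti number of the dual graph is $2g-1$), though this extra information is not used downstream; what the paper's approach buys is brevity and the avoidance of the dual-graph bookkeeping (in particular you never need to worry about self-loops in $G$, multi-edges, or whether connectivity of $G\setminus e$ really matches connectivity of $\Lambda\setminus a$). One small remark: for the proof you only need the implication ``separating $\Rightarrow$ bridge,'' which is immediate, so you could drop the ``if and only if'' and the accompanying justification without loss.
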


\begin{proof}
Observe that for an arc $a\in T$, $\Lambda \setminus a$ being connected and of genus $g-1$ is equivalent (cutting along a separating arc does not reduce genus). We now claim that $T$ always contains a non-separating arc. As $\Lambda \setminus T$ is a collection of triangles, it is of genus $0$. Now as $g(\Lambda) \geq 1$, one of the arcs of $T$ must be non-separating, otherwise $\Lambda \setminus T$ would still have positive genus.
\end{proof}

\begin{lemma}\label{lem:uppergenus2}
Let $T$ be a triangulation of $\Lambda$, a genus $g\geq 0$ surface with two boundary curves, both with marked points, and all marked points on the boundary. Then there exists $a \in T$ such that $\Lambda \setminus a$ has only one boundary component.
\end{lemma}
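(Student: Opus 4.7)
The plan is to find an arc $a \in T$ that has one endpoint on each of the two boundary components $\partial_1, \partial_2$ of $\Lambda$. Cutting along such an arc is a standard topological operation that merges $\partial_1$ and $\partial_2$ into a single boundary curve (after cutting, the path $\partial_1 \cdot a^+ \cdot \partial_2 \cdot a^-$ becomes one boundary loop), so producing such an arc will complete the proof.

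To establish the existence of such an arc, I would argue by contradiction: suppose every arc of $T$ has both endpoints on the same boundary component, so we may label each arc by the element of $\{1,2\}$ indicating which $\partial_i$ contains its endpoints. Since $\partial_1$ and $\partial_2$ have disjoint sets of marked points, any two arcs of $T$ that share a vertex must carry the same label. Consequently, for each triangle $\Delta$ of $\Lambda \setminus T$, its three edges (which pairwise meet at vertices) all carry the same label, and so all three vertices of $\Delta$ lie on the same $\partial_i$. This gives a well-defined coloring of the triangles of $\Lambda \setminus T$ by $\{1,2\}$.

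Next I would observe that whenever two triangles share an edge, they share two vertices, and so must have the same color. Hence the coloring is constant on each connected component of the dual graph of the triangulation. Since $\Lambda$ is connected, so is this dual graph, and therefore all triangles receive a single color, say $1$. But then no triangle has a vertex on $\partial_2$, contradicting the fact that every marked point of $\Lambda$ (including those on $\partial_2$, which exist by hypothesis) is a vertex of the triangulation $T$. This contradiction yields the desired arc.

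The main subtlety to watch is the very first step: once we have an arc $a$ with one endpoint on $\partial_1$ and one on $\partial_2$, I should explain cleanly why $\Lambda \setminus a$ has one fewer boundary component than $\Lambda$ (a quick regular-neighborhood argument suffices). The coloring argument itself is routine once one notices that the hypothesis forces any triangle's vertices to lie entirely on one boundary, and I do not expect any obstacle in carrying it out.
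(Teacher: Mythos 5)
Your argument is correct and is essentially the same as the paper's: both proofs proceed by contradiction, both use connectivity of the triangulated surface (you via the dual graph, the paper directly by observing that two triangles in different groups must share an edge and hence vertices) to derive a contradiction from the disjointness of the two boundary curves. The only cosmetic difference is that you first color the arcs and deduce a coloring of the triangles, while the paper colors the triangles directly; the underlying idea is identical.
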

\begin{proof}
All marked points are on the boundary so it is impossible to triangulate $\Lambda$ without a triangle that has vertices on both boundary components. To see this we can argue by contradiction. If this is not the case, then we can split the triangles into two non-empty groups depending on whether they have all of vertices on one or the other boundary curve. But as the surface is connected, there must be a triangle of the first group which shares an arc with a triangle of the second. Thus, they must also share vertices, a contradiction.
\end{proof}

We now proceed to the proof of Theorem \ref{thm:uppergenus}.

\begin{proof}[Proof of Theorem \ref{thm:uppergenus}]
Let $T$ be any triangulation of $\Gamma$. Denote by $a_0$ the arc that forms the boundary of $\Gamma$. 

The first step will be to divide the surface along an arc that has equal genus (or close to equal) on both parts. By Lemma \ref{lem:uppergenus1}, there is an arc $a_1 \in T$ such that $\Gamma \setminus a_1$ is of genus $g-1$. The resulting surface $\Gamma^{1}:=\Gamma \setminus a_1$ now has two boundary components, one consisting of two arcs and the other of a single arc. Now by Lemma \ref{lem:uppergenus2}, there exists $a_2\in T$ such that $\Gamma_2:=\Gamma^{1} \setminus a_2$ has a single boundary curve consisting of $5$ arcs. In short, we found two arcs of $T$ such that cutting along these arcs produces a surface of genus $g-1$ with a single boundary component with $4$ more arcs than the original surface $\Gamma$. We can iterate the above process at total of $\floor{ \frac{g}{2}}$ times to obtain a collection of $ 2\floor{ \frac{g}{2}}$ arcs such that cutting along these arcs results in a genus $g - \floor{ \frac{g}{2}}$ surface $\overline {\Gamma}$ with a single boundary curve formed by $1 + 4 \floor{ \frac{g}{2}} $ arcs. One of these is $a_0$. 

Denote $p_0$ and $p_0'$ the two vertices of $a_0$ on $\overline {\Gamma}$. We denote $b$ the unique loop based in $p_0$ homotopic to the boundary of $\overline {\Gamma}$ and $b'$ the arc from $p_0'$ to $p_0$ which forms a triangle with $a_0$ and $b$ (see Figure \ref{fig:thmgenus1}).

\begin{figure}[h]
\leavevmode \SetLabels
\L(.39*.97) $p_0$\\
\L(.602*.96) $p_0'$\\
\L(.51*.98) $a_0$\\
\L(.53*.825) $b$\\
\L(.574*.71) $b'$\\
\endSetLabels
\begin{center}
\AffixLabels{\centerline{\epsfig{file =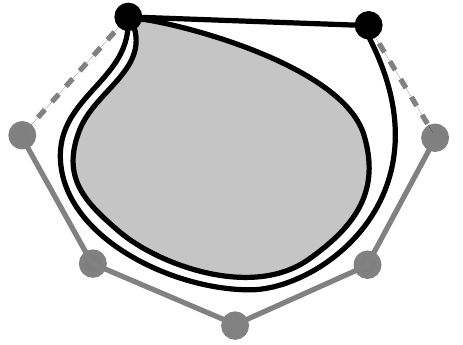,width=4.5cm,angle=0} }}
\vspace{-30pt}
\end{center}
\caption{$\overline{\Gamma}$} \label{fig:thmgenus1}
\end{figure}

Both $b$ and $b'$ have a nice property: they don't intersect $T$ too much. More precisely, as there are parallel to the boundary of $\overline{\Gamma}$ which is formed by arcs of $T$, they intersect each of the the remaining arcs at most twice. Thus 
$$
i(x, T) \leq 2 (\kappa(\Gamma) - 2 \left \lfloor \frac{g}{2} \right \rfloor ), \,\,x=b,b'.
$$
Now $\kappa(\Gamma) = 6g -2$ so we can deduce that 
$$
i(b,T) + i(b',T) \leq 20 g - 4.
$$
Now using the upper bound on the distance to a stratum in function of intersection number, we can introduce the arcs $b$ and $b'$ in at most $20 g - 4$ flips. 

The reason one might want to do this is that these arcs separate the surface into three canonical surfaces: a triangle containing $a_0$ and two surfaces with a single boundary curve and of genus $\floor{\frac{g}{2}}$ and $g - \floor{\frac{g}{2}}$. As such, up to homeomorphism, the pair of arcs $b$ and $b'$ are unique (see Figure \ref{fig:thmgenus2}). 

\begin{figure}[h]
\leavevmode \SetLabels
\L(.51*1.01) $p_0$\\
\L(.60*.92) $a_0$\\
\L(.453*.08) $b$\\
\L(.51*.08) $b'$\\
\endSetLabels
\begin{center}
\AffixLabels{\centerline{\epsfig{file =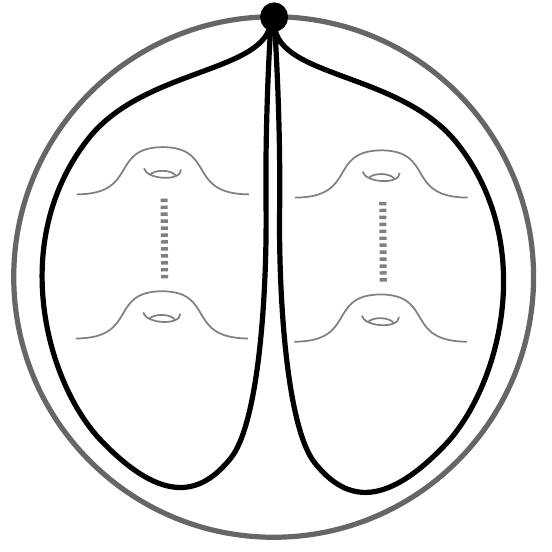,width=6.0cm,angle=0} }}
\vspace{-18pt}
\end{center}
\caption{The arcs $b$ and $b'$} \label{fig:thmgenus2}
\end{figure}

With this in hand, we will prove the bound by induction. We begin by checking the result for $g=1$. Here we need to check that the diameter is at most $1000 \log(2) > 693 > 5$. But there are at most $5$ different possible triangulations. Indeed such a triangulated surface is obtained by pasting four sides of a triangulated $5$-gon together. There are $C_3= 5$ different possible triangulations of the $5$-gon and only one to paste together the $5$-gon to get a one holed torus.

We now suppose $g(\Gamma)\geq 2$. 

Given two different triangulations $S$ and $T$ in $\MF(\Gamma)$, we flip both triangulations to obtain triangulations $S'$ and $T'$ with arcs as above. These triangulations now both belong to a stratum of $\MF(\Gamma_{b,b'})$ where $b$ and $b'$ are as above. We denote $\Gamma_1$ and $\Gamma_2$ the two non-triangular surfaces in $\Gamma \setminus \{b \cup b' \}$. Denote (for $k=1,2$) $S'_k$, resp. $T'_k$, the restrictions of $S'$, resp. $T'$, to $\Gamma_k$. We shall now flip $S'_k$ and $T'_k$ inside $\MF(\Gamma_k)$ for $k=1,2$. Once the triangulations coincide on both $\Gamma_1$ and $\Gamma_2$. they will coincide on $\Gamma$. 

By induction for $k=1,2$:
$$d(S'_k,T'_k) \leq \diam (\MF(\Gamma_k) )\leq A\, \frac{g+1}{2} \log \frac{g+3}{2}.$$

By induction (here we take into account that $g$ can be odd in the bound of $g- \floor{\frac{g}{2}}$):
$$d(S'_1,T'_1) \leq \diam (\MF(\Gamma_1)) \leq A\, \frac{g}{2} \log \frac{g+2}{2}$$
and
$$
d(S'_2,T'_2) \leq \diam (\MF(\Gamma_2)) \leq A\, \frac{g+1}{2} \log \frac{g+3}{2}.
$$

Putting this all together:
\begin{eqnarray*}
d(S,T) & \leq & d(S,S')+d(T,T') + d(S'_1,T'_1)+ d(S'_2,T'_2)\\
& \leq & 40 g - 8 + A\, \frac{g}{2} \log \frac{g+2}{2} + A\, \frac{g+1}{2} \log \frac{g+3}{2}\\
& \leq &A \, g \log (g+1).\\
\end{eqnarray*}
The last inequality can be checked via a computation using $A=1000$ and $g\geq 2$. 
\end{proof}

\begin{remark}\label{rem:onepuncture}

In light of Lemma \ref{lem:globalupper}, in the above theorem we've treated the case where the boundary of $\Gamma$ is a loop. The above proof however applies verbatim to the case where $\Gamma$ has a single puncture and no other boundary. The resulting theorem is the following.

\begin{theorem}\label{thm:onepunctureupper}
If $\Gamma$ is a surface with genus $g$ and one puncture, then the diameter of the modular flip graph of $\Gamma$ satisfies
$$ \diam (\MF(\Gamma)) < A g \log(g+1) $$ 
where $A$ can be taken to be $1000$.
\end{theorem}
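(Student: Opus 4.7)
The plan is to mirror the proof of Theorem~\ref{thm:uppergenus} essentially verbatim, treating the puncture $p$ in the role of the single boundary marked point $p_0$. The argument of that proof only relies on the following structural facts about $\Gamma$: it has genus $g\geq 1$, a single distinguished marked point, and every arc under consideration is a loop based at this point. None of these depend on whether the distinguished point lies on a boundary curve or is a puncture, so the skeleton of the proof transfers directly.

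First, I would verify that Lemmas~\ref{lem:uppergenus1} and~\ref{lem:uppergenus2} remain available at every step. For the initial $\Gamma$ (no boundary), the argument behind Lemma~\ref{lem:uppergenus1} still applies: the complement of any triangulation is a disjoint union of triangles (hence has genus $0$), so for $g\geq 1$ at least one arc of $T$ must be non-separating. After the first cut the puncture gets split into boundary marked points, and from that point on both lemmas apply in their stated form. Iterating $\lfloor g/2\rfloor$ rounds of cuts yields a surface $\overline{\Gamma}$ of genus $g-\lfloor g/2\rfloor$ with a single boundary component, exactly as in the boundary case.

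Next, I would identify canonical arcs $b,b'$ based at (a copy of) the puncture that, up to homeomorphism, split $\overline{\Gamma}$ into a triangle and two surfaces $\Gamma_1,\Gamma_2$ of genus $\lfloor g/2\rfloor$ and $g-\lfloor g/2\rfloor$, each with one boundary curve. Since $b$ and $b'$ are parallel to the boundary of $\overline{\Gamma}$ (which is made of arcs of the triangulation produced by the cutting process), they intersect $T$ at most linearly in $g$. Applying Corollary~\ref{dist-up} lets me flip into $\F_{b\cup b'}$ in $O(g)$ moves, and then Proposition~\ref{sep-multiarc} reduces the problem to flipping independently inside $\MF(\Gamma_1)$ and $\MF(\Gamma_2)$, which is where the inductive hypothesis applies. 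The base case $g=1$ is the once-punctured torus, whose modular flip graph has a single vertex (the Farey-type triangulation is unique up to the mapping class group), so its diameter is $0 < 1000\log 2$.

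The only inessential differences from the boundary case are that $\kappa(\Gamma)=6g-3$ rather than $6g-2$, and that the intermediate boundary configurations are shifted by one arc (the original arc $a_0$ is absent here). These alter the linear term in the intersection bound by at most a constant factor, and the inductive inequality
\[
A\,\tfrac{g}{2}\log\tfrac{g+2}{2} \;+\; A\,\tfrac{g+1}{2}\log\tfrac{g+3}{2} \;+\; O(g) \;\leq\; A g \log(g+1)
\]
with $A=1000$ still closes, since the bookkeeping in Theorem~\ref{thm:uppergenus} has ample slack. Thus no genuine new obstacle arises beyond checking that these constants absorb the minor shift in arc counts.
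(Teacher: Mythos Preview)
Your proposal is correct and takes essentially the same approach as the paper: the paper's own ``proof'' is precisely the remark that the argument of Theorem~\ref{thm:uppergenus} applies verbatim, and your write-up simply spells out why (Lemmas~\ref{lem:uppergenus1}--\ref{lem:uppergenus2} still apply, the arc counts shift by one since there is no $a_0$, and the $g=1$ base case is even easier because $\MF$ of the once-punctured torus is a point). If anything, note that in the punctured case a single separating loop $b$ already suffices (there is no $a_0$ to cordon off with a triangle), so the setup with both $b$ and $b'$ is slightly redundant---but this only improves the constants.
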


\end{remark}

\subsection{Upper bounds in terms of number of punctures}

We now focus our attention on the flip graph of $\Omega$, a disk with $n-1$ interior punctures and one marked point on the unique boundary curve of $\Omega$. Our goal is to prove the following upper bound which is very similar to the upper bound for $\Gamma$.

\begin{theorem}\label{thm:upperpuncture}
If $\Omega$ has $n-1$ labelled punctures then the diameter of the modular flip graph of $\Omega$ satisfies
$$
\diam\left( \MF(\Omega) \right) < A n \log (n+1)
$$
where $A$ can be taken equal to $400$.
\end{theorem}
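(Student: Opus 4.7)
The plan is to prove Theorem \ref{thm:upperpuncture} by strong induction on $n$, closely paralleling the proof of Theorem \ref{thm:uppergenus} but replacing the genus-cutting arcs by a loop at $p_0$ that splits the punctures roughly in half. The crucial observation is that, although distinct balanced loops enclose distinct sets of labelled punctures and are therefore not equivalent under $\MOD(\Omega)$, we can still close the induction because, given two triangulations $S$ and $T$, we are free to choose a loop with controlled intersection against both simultaneously by reading it off from the common graph $S \cup T$.

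For the inductive step, I would construct such a loop as follows. View $G = S \cup T$ as a multigraph on the $n$ marked points of $\Omega$ and pick a spanning tree $\tau$ of $G$ (which has $n-1$ edges), rooted at $p_0$. A standard centroid argument produces a non-root vertex $v$ with $|V(\tau_v)| \in [n/3, 2n/3]$, where $\tau_v$ is the subtree hanging from $v$. Setting $\tau^* := \tau \setminus \tau_v$ (a subtree containing $p_0$), the boundary of a regular neighborhood of $\tau^*$ in $\Omega$ is, after isotoping its two endpoints on $\partial \Omega$ to $p_0$, a simple loop $\ell$ based at $p_0$ enclosing precisely the punctures in $V(\tau^*) \setminus \{p_0\}$. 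This loop splits $\Omega$ into subdisks $\Omega_1, \Omega_2$ with $n_1 = n - |V(\tau_v)|$ and $n_2 = 1 + |V(\tau_v)|$ marked points, so $n_1 + n_2 = n+1$ and $n_i \in [n/3, 2n/3 + 1]$. A direct count (mimicking Lemma \ref{lem:separatearc}) shows that $\ell$ crosses the unique tree edge leaving $\tau^*$ once and crosses each of the $2\kappa - (n-1) = 5n - 9$ chord edges of $G$ at most twice, so
$$
i(\ell, S) + i(\ell, T) = i(\ell, G) \leq 10n - 17.
$$

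Having found $\ell$, I would then apply Corollary \ref{dist-up} to flip $S$ to some $S' \in \F_\ell$ in at most $i(\ell, S)$ flips, and likewise $T$ to $T' \in \F_\ell$. By Proposition \ref{sep-multiarc}, $\F_\ell$ splits isometrically as $\F(\Omega_1) \times \F(\Omega_2)$ with the $\ell^1$ metric, and since the stabilizer of $\ell$ in $\MOD(\Omega)$ contains $\MOD(\Omega_1) \times \MOD(\Omega_2)$ acting factorwise, one obtains
$$
d_{\MF(\Omega)}(S', T') \leq d_{\MF(\Omega_1)}(S'_1, T'_1) + d_{\MF(\Omega_2)}(S'_2, T'_2).
$$
Applying the induction hypothesis to each factor and combining gives
$$
d_{\MF(\Omega)}(S, T) \leq (10n - 17) + A \bigl[ n_1 \log(n_1 + 1) + n_2 \log(n_2 + 1) \bigr].
$$
The centroid bound $n_i \leq 2n/3 + 1$ together with the convexity of $x \mapsto x \log(x+1)$ bounds the bracket above by $n \log(n+1) - c\, n$ for a positive absolute constant $c$; for $A$ large enough this absorbs the $O(n)$ overhead, and $A = 400$ leaves ample slack.

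The main obstacle is exactly the feature that makes the puncture case different from the genus case: there is no canonical balanced loop up to $\MOD(\Omega)$, so the strategy of Theorem \ref{thm:uppergenus}, which relies on introducing a fixed system of arcs into both triangulations separately, does not transfer. The fix is to allow $\ell$ to depend on the pair $(S, T)$ through $S \cup T$, which is what makes the simultaneous intersection bound on both $S$ and $T$ possible. Small values of $n$, where the centroid does not yet give $n_i \geq 2$, will have to be settled as base cases, but $\MF(\Omega)$ has bounded diameter in that regime and the inequality holds trivially once $A$ is taken large.
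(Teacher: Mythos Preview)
Your overall strategy is appealing, and it is genuinely different from the paper's: the paper flips each triangulation separately to a fixed canonical ``seashell'' triangulation and then pays an explicit $O(n)$ merging cost to interleave the two halves, whereas you try to choose a balanced loop adapted to both $S$ and $T$ simultaneously and recurse. Unfortunately the construction of that loop has a real gap.

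The problem is that $G=S\cup T$ is \emph{not} an embedded graph on the $n$ marked points: arcs of $S$ and arcs of $T$ will in general cross one another in the interior of $\Omega$. So a spanning tree $\tau$ of the abstract multigraph $G$ need not be realized by a disjoint collection of arcs, and the ``boundary of a regular neighborhood of $\tau^*$ in $\Omega$'' is not a simple loop with the intersection properties you claim. If instead you take $\tau\subset S$ (so that $\tau$ really is embedded and Lemma~\ref{lem:separatearc} applies), then $i(\ell,S)=O(n)$ holds, but you lose all control over $i(\ell,T)$: an arc of $T$ can cross the arcs of $\tau^*\subset S$ arbitrarily many times, so $i(\ell,T)$ is not bounded by any function of $n$ alone. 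This is precisely why the labelled case does not reduce to the genus argument; the paper addresses it by routing everything through a canonical target and paying for the merge explicitly.

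There is also a secondary issue: the centroid claim ``a non-root vertex $v$ with $|V(\tau_v)|\in[n/3,2n/3]$'' is false in general (take $\tau$ a star centered at $p_0$), so even in a world where $G$ were embedded you could not always cut along a single tree edge. That one is repairable---peel leaves to get a subtree of prescribed size and accept up to $n-1$ tree-edge crossings, still $O(n)$---but the embedding issue above is the one that actually breaks the argument.
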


Before proceeding to the proof, we state a preliminary lemma. 

\begin{lemma}\label{lem:upperpuncture}
Let $T$ be a triangulation of $\Lambda$, a $m\geq 1$-punctured disk with $k\geq 1$ marked points on the boundary. Then $T$ contains an arc $a$ between an interior puncture and marked point on the boundary. 
\end{lemma}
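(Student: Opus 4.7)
The plan is to argue by contradiction, using essentially the same combinatorial trick that appears in the proof of Lemma~\ref{lem:uppergenus2}. Suppose no arc of $T$ connects an interior puncture to a marked point on $\partial\Lambda$. Then every arc of $T$ is of one of two pure types: a \emph{boundary arc}, whose endpoints both lie on $\partial\Lambda$, or an \emph{interior arc}, whose endpoints are both interior punctures.

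Next I would use this dichotomy to classify the triangles of $\Lambda\setminus T$. Any triangle has three sides meeting pairwise at vertices; if it contained both a boundary arc and an interior arc, a shared endpoint would have to lie simultaneously on $\partial\Lambda$ and at an interior puncture, which is impossible. Hence every triangle is either a \emph{boundary triangle}, bounded by three boundary arcs (with all three vertices on $\partial\Lambda$), or an \emph{interior triangle}, bounded by three interior arcs (with all three vertices at interior punctures). Since $k\geq 1$, the $k$ boundary segments of $\partial\Lambda$ are boundary arcs, so at least one boundary triangle exists. Since $m\geq 1$ and $T$ is a maximal multiarc, every interior puncture is a vertex of some triangle; any such triangle must be an interior triangle, so at least one interior triangle exists as well.

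Finally I would derive a contradiction from connectedness of $\Lambda$ via the dual graph. Let $G$ be the graph whose vertices are the triangles of $\Lambda\setminus T$, with an edge for each pair of triangles sharing an arc of $T$ (or one of the boundary segments, which we also view as an arc of the triangulation). Since $\Lambda$ is a connected surface triangulated by these triangles, $G$ is connected. However, no edge of $G$ can join a boundary triangle to an interior triangle, since the shared arc would have to be at once a boundary and an interior arc. Therefore $G$ decomposes as the disjoint union of a non-empty subgraph of boundary triangles and a non-empty subgraph of interior triangles, contradicting its connectedness. The only point requiring care is step two — ensuring that the type dichotomy on arcs propagates to a type dichotomy on triangles — but once the vertices of each triangle are constrained to lie on only one of the two strata, the dual graph argument closes the proof.
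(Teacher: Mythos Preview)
Your proof is correct. By assuming no arc joins the two types of vertices, you cleanly partition the arcs, then the triangles, and finally the dual graph into two non-empty classes that cannot communicate, contradicting connectedness of $\Lambda$. Each step is justified: the triangle dichotomy follows because two adjacent sides of a triangle share a vertex; both classes are non-empty since $k\geq 1$ gives a boundary triangle and $m\geq 1$ together with maximality of $T$ forces some triangle to have an interior puncture as a vertex; and the dual graph of a triangulated connected surface is connected.

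Your route differs from the paper's. The paper argues that, under the same assumption, a simple closed curve parallel to $\partial\Lambda$ can be taken disjoint from $T$, so $\Lambda\setminus T$ would contain an embedded annulus rather than only triangles. That argument is shorter but leaves implicit why no boundary--boundary arc of $T$ can wind around an interior puncture (which is exactly what your dichotomy makes explicit). Your approach is really the argument the paper uses for Lemma~\ref{lem:uppergenus2}, transported to this setting; it trades the geometric picture of the annulus for a purely combinatorial disconnection of the dual graph, and in doing so makes every step verifiable without further topological input.
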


\begin{proof}
If not, then a simple curve parallel to boundary does not intersect $T$ and hence $\Lambda \setminus T$ contains an embedded annulus.
\end{proof}

With that observation in hand, we now proceed to the proof of Theorem \ref{thm:upperpuncture}.

\begin{proof}[Proof of Theorem \ref{thm:upperpuncture}]
Let $T$ be a triangulation of $\Omega$ where we suppose that $n\geq 2$ (if $n=1$ then there the flip graph has a single triangulation). We denote the boundary arc of $\Omega$ $a_0$, the boundary marked point $p_0$ and the remaining punctures $p_j$, $j=1,\hdots,n$. Our goal will be to flip our triangulation to a canonical triangulation and argue by induction on the distance to this canonical triangulation. The upper bound on distance between arbitrary triangulations is then at most twice this distance. Our canonical triangulation $S$ is the following.

\begin{figure}[h]
\leavevmode \SetLabels
\L(.51*1.01) $p_0$\\
\L(.51*.92) $p_1$\\
\L(.65*.945) $a_0$\\
\L(.33*.843) $a_1$\\
\L(.61*.763) $a_k$\\
\L(.51*.83) $p_k$\\
\L(.43*.565) $p_{n-2}$\\
\L(.47*.45) $p_{n-1}$\\
\L(.532*.523) $a_{n-2}$\\
\endSetLabels
\begin{center}
\AffixLabels{\centerline{\epsfig{file =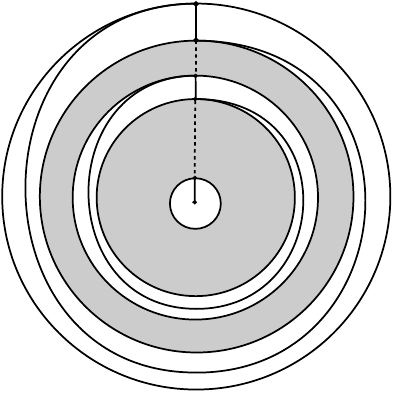,width=10.0cm,angle=0} }}
\vspace{-30pt}
\end{center}
\caption{The canonical triangulation} \label{fig:seashell}
\end{figure}

The triangulated surface is formed of layers. Each layer except for the last one is a cylinder with two boundary arcs $a_{k-1}$ and $a_{k}$ with punctures $p_{k-1}\in a_{k-1}$ and $p_{k}\in a_{k}$ for $k=1,\hdots,n-1$. The cylinders all contain a single interior arc from the triangulation as in the figure. The last layer is a disk with boundary $a_{n-1}$ and puncture $p_{n-1}\in a_{n-1}$ and interior puncture $p_n$. There is an arc in the triangulation between $p_{n-1}$ and $p_n$. 

To reach this triangulation from $T$ we proceed as follows. We begin by finding arcs that will divide the surface into punctured disks with the same (or close to the same) number of punctures in each disk. By Lemma \ref{lem:upperpuncture}, there is an arc $c\in T$ such that $\Omega\setminus c$ is a disk with $3$ boundary arcs: the arc $a_0$ and the two copies of $c$. We reiterate the above process $\floor{\frac{n}{2}}$ times cutting along $\floor{\frac{n}{2}}$ arcs to obtain a disk $\overline{\Omega}$ with $1 + 2 \floor{\frac{n}{2}}$ boundary arcs. On this boundary, $a_0$ joins two vertices: $p'_0$ and another, say $p_0''$, both copies of $p_0$. Consider the arc $b$ which forms a loop in $p'_0$ parallel to the boundary of $\overline{\Omega}$. Similarly, consider $b'$ which forms a triangle with $a_0$ and $b$: $b'$ is an arc between $p_0'$ and $p_0''$ which runs parallel to the boundary of $\overline{\Omega}$.

Both $b$ and $b'$ have a nice property: they don't intersect $T$ too much. More precisely, as there are parallel to the boundary of $\overline{\Omega}$ which is formed by arcs of $T$, they intersect each of the the remaining arcs at most twice. Thus 
$$
i(x, T) \leq 2 (\kappa(\Omega) - \left \lfloor{ \frac{n}{2}} \right \rfloor), \,\,x=b,b'.
$$
Now $\kappa(\Omega) = 3n-2$ and $- 2 \floor{ \frac{n}{2}} \leq -n+1$ so we can deduce that 
$$
i(b,T) + i(b',T) \leq 10n -10.
$$
Now using the upper bound on the distance to a stratum in function of intersection number, we can introduce the arcs $b$ and $b'$ in at most $10n - 10$ flips. 

The resulting triangulation now has an arc surrounding $\floor{ \frac{n}{2}}$ punctures, another surrounding $n - \floor{ \frac{n}{2}}$ punctures and the two arcs form a triangle with $a_0$ (see Figure \ref{fig:dividepunctures}).
\begin{figure}[h]
\leavevmode \SetLabels
\L(.51*1.01) $p_0$\\
\L(.60*.92) $a_0$\\
\L(.453*.08) $b$\\
\L(.51*.08) $b'$\\
\endSetLabels
\begin{center}
\AffixLabels{\centerline{\epsfig{file =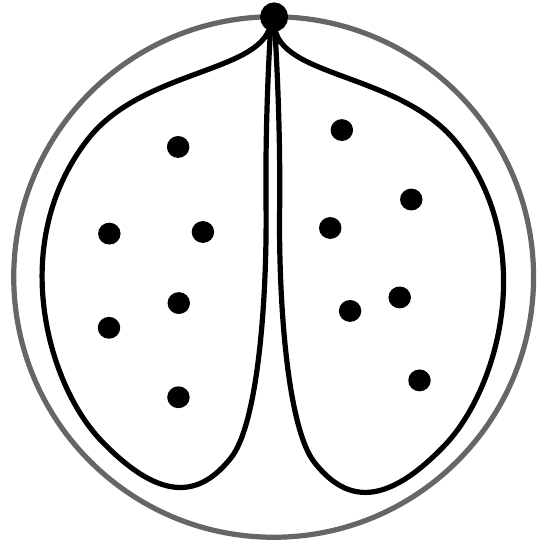,width=6.0cm,angle=0} }}
\vspace{-24pt}
\end{center}
\caption{The arcs $b$ and $b'$} \label{fig:dividepunctures}
\end{figure}
We now argue by induction on the two subsurfaces $\Omega_b$ and $\Omega_{b'}$ surrounded by $b$ and $b'$ to flip them towards their canonical triangulations. We have no control over which punctures are found in $\Omega_b$ and $\Omega_{b'}$ but the punctures do inherit an order from $\Omega$ and their canonical triangulations are meant with respect to that order. The number of flips inside each of the two subsurfaces, by induction, is at most
$$
A (\left \lfloor{ \frac{n}{2}} \right \rfloor + 1) \log(\left \lfloor{ \frac{n}{2}} \right \rfloor +2).
$$

Denote the resulting triangulation $T'$. We now need to merge the two subtriangulations of $T'$ to obtain the canonical one. To do this we proceed by steps where each step in the process will be to add a cylinder bounded by arcs $a_{k-1}$ and $a_k$ with punctures $p_{k-1}$ and $p_k$.

We begin with the first step. Puncture $p_1$ is either found in $\Omega_b$ (the lefthand subsurface) or in $\Omega_{b'}$ (the righthand subsurfaces). In either event it shares an arc with $p_0$ as both sub triangulations are canonical (and thus ordered). If $p_1$ on the left, we flip as in Figure \ref{FLIPLEFT}, and similarly if $p_1$ is on the right. As illustrated in the figures, the process takes 6 flips. We've constructed the first ring of the canonical triangulation. This ring surrounds a divided subsurface and we are in the same situation as above, where $p_1$ and $a_1$ play the part of $p_0$ and $a_0$ and with one less interior puncture. We can iterate the process a total of $n-1$ times (the last step is automatic) and arguing by induction we have reached the canonical triangulation in $6(n-1)$ steps from $T'$. 

\begin{figure}[h]
\centerline{ \epsfig{file =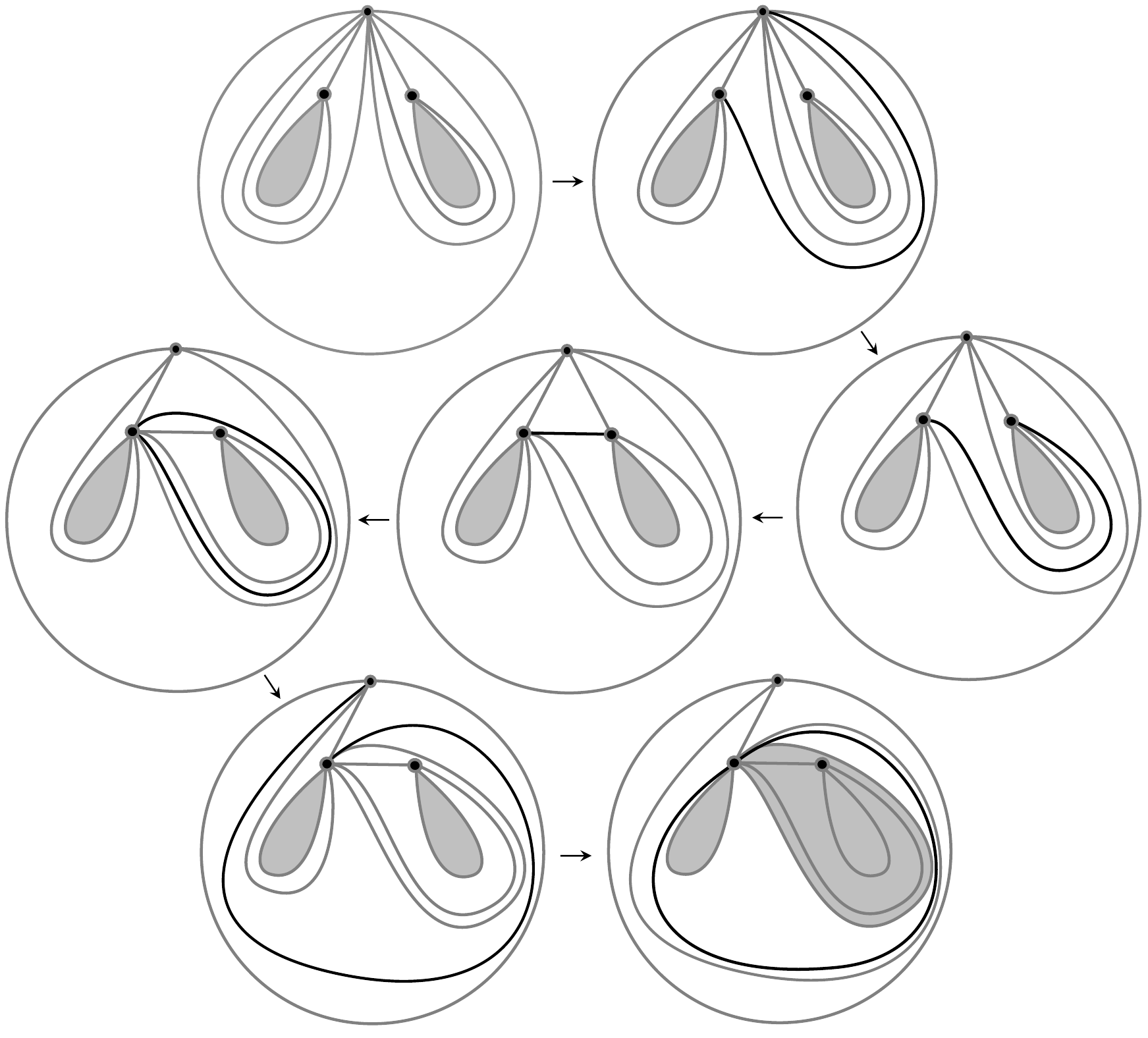, width=8.0cm, angle=0} }
\caption{Merging step}\label{FLIPLEFT}
\end{figure}

Putting this all together we have that for any $T\in \MF(\Omega)$
$$
d(T,S) \leq 10n - 10 + A (\left \lfloor{ \frac{n}{2}} \right \rfloor + 1) \log(\left \lfloor{ \frac{n}{2}} \right \rfloor +2) + 6(n-1).
$$
Arguing like in the genus case (see the proof of Theorem \ref{thm:uppergenus}) we obtain that 
$$
d(T,S)\leq A n \log(n+1).
$$
This shows that any two triangulations are at distance at most $2 A n \log(n+1)$ where $A$ can be taken equal to $200$.
\end{proof}

\begin{remark}\label{rem:unmarked}
The upper bound on $\MF(\Omega)$ is much easier if the punctures are unlabelled. Indeed, given a vertex $p$, if a triangulation contains arcs that are not incident to $p$, you can always find a flip that increases the incidence in $p$. 
Let $S$ and $T$ be two triangulations. After at most $4 \kappa - 2n$ valence-increasing flips, both $T$ and $S$ look like in Figure \ref{fig:flower_0}, that is, up to homeomorphisms they differ only in the shaded area. The shaded area can be thought as a triangulated $n$-agon. By Theorem \ref{th:STT} $T$ and $S$ differ by at most $4 \kappa - 2n + 2n = 4 \kappa$ flips. 

\begin{figure}[h]
\leavevmode \SetLabels
\L(.505*1.01) $p$\\
\L(.595*.93) $ $\\
\endSetLabels
\begin{center}
\AffixLabels{\centerline{\epsfig{file =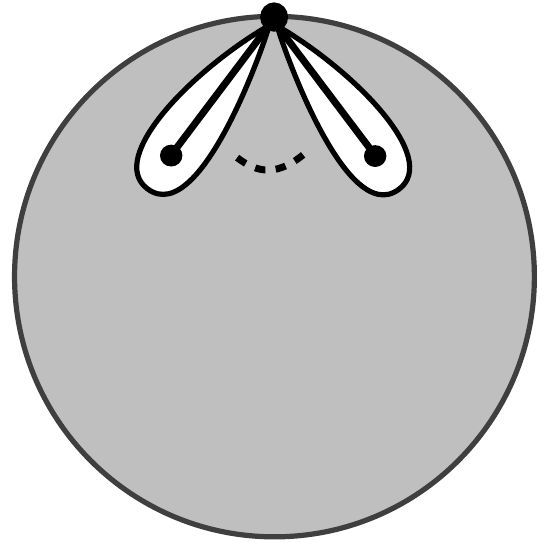,width=3.0cm,angle=0} }}
\vspace{-30pt}
\end{center}
\caption{The shaded area is triangulated.} \label{fig:flower_0}
\end{figure}
\begin{theorem}\label{thm:unmarked}
If $\Omega$ has $n-1$ unlabelled punctures then the diameter of the modular flip graph of $\Omega$ satisfies 
$$\diam (\MF(\Omega)) < A n $$
where $A$ can be taken equal to $12$. 
\end{theorem}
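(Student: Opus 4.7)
The plan is to implement the strategy outlined in Remark~\ref{rem:unmarked}: use valence-increasing flips to push both triangulations to a flower configuration (every triangle incident to $p$, the boundary marked point), then absorb the residual discrepancy via Theorem~\ref{th:STT} on a central polygon. The unlabelled hypothesis is crucial, since the freedom to permute punctures by homeomorphisms allows any two flowers in $\MF(\Omega)$ to be identified outside a combinatorial $n$-gon.

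The key combinatorial lemma is the following: if a triangulation $T$ contains a triangle none of whose vertices is $p$, then some flippable arc $a \in T$ has the property that flipping it strictly increases the valence of $p$. To produce $a$, I would pick any triangle $\Delta$ not incident to $p$ and follow a shortest path in the dual graph of $T$ from $\Delta$ to a triangle incident to $p$; the last dual edge along this path corresponds to an arc $a$ bordering one triangle incident to $p$ and one not. A local analysis of the surrounding quadrilateral then shows that the third vertex of the $p$-incident triangle opposite to $a$ must equal $p$, so the flip replaces $a$ by an arc with $p$ as an endpoint, gaining one at the valence count. The possibility that $a$ is unflippable (enclosed by a single arc around a once-punctured disk) must be handled separately by instead flipping the enclosing arc.

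Iterating, each of $T$ and $S$ reaches a flower in at most $2\kappa - n$ flips, the bound following from the fact that the valence of $p$ in a flower equals $2\tilde{\kappa} = 4n - 6$ together with standard relations among $\kappa$, $\tilde{\kappa}$ and $n$. Two flowers $T^{\star}, S^{\star} \in \MF(\Omega)$ agree, up to a puncture-permuting homeomorphism, outside a central region that is combinatorially a triangulated $n$-gon, so Theorem~\ref{th:STT} bounds $d(T^{\star}, S^{\star}) \leq 2n$. Summing yields
\begin{equation*}
d(T, S) \leq (2\kappa - n) + 2n + (2\kappa - n) = 4\kappa,
\end{equation*}
and since $\kappa = 3n - 5 < 3n$ this is strictly less than $12n$, giving the theorem with $A = 12$. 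The main technical obstacle is the valence-increasing lemma itself: one must patiently verify both that the proposed flip is legal and that it strictly increases valence, across the possible local configurations of $p$ near the boundary loop $a_0$.
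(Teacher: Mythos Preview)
Your approach is precisely the one sketched in the paper's Remark~\ref{rem:unmarked}, and the final arithmetic $4\kappa = 4(3n-5) = 12n - 20 < 12n$ is correct. Two technical points need tightening, the first of which is a genuine (if easily repaired) gap.

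\textbf{(1) Termination criterion.} Your iteration halts once every \emph{triangle} has $p$ as a vertex, but the $n$-gon comparison requires every \emph{arc} to be incident to $p$. These are not equivalent: if all triangles meet $p$ but some interior arc $a$ joins two punctures $u,v \neq p$, then both triangles flanking $a$ have $p$ as the vertex opposite $a$, yet such a $T^\star$ is not homeomorphic (even after permuting the unlabelled punctures) to the configuration of Figure~\ref{fig:flower_0} outside a central polygon, so the claim that two flowers ``agree outside a triangulated $n$-gon'' fails. The fix is immediate: in that situation $a$ is flippable (the two flanking triangles are distinct) and the flip produces a loop at $p$, so one simply continues the valence-increasing process until every arc is $p$-incident. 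Incidentally, this also shows your separate worry about unflippable arcs is moot: the arc you select always borders two distinct triangles.

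\textbf{(2) Valence count.} The valence of $p$ in the final flower is not $2\tilde\kappa = 4n-6$ but $2\kappa - (n-1) = 5n-9$ (interior arcs only): each of the $n-1$ punctures has valence exactly $1$, and $p$ absorbs the remaining $2\kappa-(n-1)$ half-edges. This does not damage the bound, since the number of valence-increasing flips is at most $(5n-9)$ minus the initial valence of $p$ (which is at least $1$), i.e.\ at most $5n-10 = 2\kappa - n$ as you claim.
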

\end{remark}

\begin{remark}\label{rem:allpunctures}
The above proof 
however applies verbatim to the case where $\Omega$ is a punctured sphere (in this case the arcs $b$ and $b'$ in Figure \ref{fig:dividepunctures} coincide.)
We thus have the following.
\begin{theorem}\label{thm:genuszeroupper}
If $\Omega$ is a sphere with $n$ labelled punctures then 
$$\diam (\MF(\Omega)) < A n \log(n) $$
where $A$ can be taken equal to $410$. 
\end{theorem}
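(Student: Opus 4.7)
The plan is to mimic the proof of Theorem \ref{thm:upperpuncture} almost verbatim, as suggested in the remark, with two small adjustments: (i) fix a distinguished puncture $p_0 \in \Omega$ to play the role of the boundary marked point from the disk setting, and (ii) note that when cutting the sphere along a loop based at $p_0$ there is no external arc $a_0$, so the ``triangle formed by $a_0$, $b$ and $b'$'' collapses and we may take $b = b'$ as a single separating loop. Concretely, first choose $p_0$ and mimic the argument of Lemma \ref{lem:upperpuncture}: in any triangulation $T$ of a multiply-punctured sphere there is an arc of $T$ incident to $p_0$ (otherwise a small circle around $p_0$ avoids $T$ and produces an embedded annulus in $\Omega \setminus T$). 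Iteratively cut $T$ along $\lfloor n/2 \rfloor$ arcs incident to $p_0$ as in the proof of Theorem \ref{thm:upperpuncture} to obtain an ``opened up'' disk $\overline{\Omega}$ with boundary composed of $2\lfloor n/2 \rfloor$ arcs of $T$.

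Next, take $b$ to be the loop based at $p_0$ that runs parallel to $\partial \overline{\Omega}$. Since $b$ is parallel to the $2\lfloor n/2 \rfloor$ boundary arcs of $\overline{\Omega}$ (which are arcs of $T$), it crosses every other arc of $T$ at most twice, giving
\[
i(b,T) \;\leq\; 2\bigl(\kappa(\Omega)-\lfloor n/2\rfloor\bigr) \;\leq\; 5n,
\]
since $\kappa(\Omega) = 3n-6$ (or similar, up to the exact convention). By Corollary \ref{dist-up}, $T$ is within $5n$ flips of the stratum $\F_b$. Cutting along $b$ now produces two subsurfaces that are each a punctured disk with $p_0$ appearing as a single boundary marked point and roughly $n/2$ interior punctures, namely surfaces of the type considered in Theorem \ref{thm:upperpuncture}. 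The punctures inherit their labels and ordering from $\Omega$.

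The canonical triangulation is built as follows: take $b$, which cuts the sphere into two punctured disks $\Omega_b$ and $\Omega_{b'}$ with $\lfloor n/2 \rfloor$ and $n-\lfloor n/2 \rfloor - 1$ interior punctures respectively; on each of these disks put the canonical ``seashell'' triangulation of the proof of Theorem \ref{thm:upperpuncture}, respecting the inherited order on punctures. Since the homeomorphism type of each piece and of $b$ itself is determined by the ordering data, this triangulation is uniquely defined up to the mapping class group. Applying Theorem \ref{thm:upperpuncture} inside each piece we can bring the restrictions of $T$ to $\Omega_b$ and $\Omega_{b'}$ to their canonical forms in at most
\[
A\,(\lfloor n/2 \rfloor + 1)\log(\lfloor n/2 \rfloor + 2) \;+\; A\,(n-\lfloor n/2 \rfloor)\log(n-\lfloor n/2 \rfloor +1)
\]
flips, where $A=400$ is the constant from Theorem \ref{thm:upperpuncture}.

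Summing the contributions yields that any $T$ is within
\[
5n + A\,(\lfloor n/2 \rfloor + 1)\log(\lfloor n/2 \rfloor + 2) + A\,(n-\lfloor n/2 \rfloor)\log(n-\lfloor n/2 \rfloor +1)
\]
flips of the canonical triangulation, and the triangle inequality gives $\diam(\MF(\Omega)) \leq 2$ times this quantity. A direct numerical check, analogous to the one at the end of the proof of Theorem \ref{thm:upperpuncture}, shows that this is bounded above by $410\, n \log n$ for all $n$ large enough, and the finitely many small cases can be handled by hand. The only delicate point, and the main bookkeeping obstacle, is the choice of constant: one must verify that the slightly weaker recursion (no triangle cutting off $a_0$, hence the two subsurfaces together contain all $n-1$ interior punctures rather than $n-2$) still closes under the inductive constant $A=410$, which is slightly larger than the $400$ used for the disk.
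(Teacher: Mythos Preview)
Your overall shape is right and matches the paper's intention: choose a puncture $p_0$ to play the role of the boundary vertex, open the sphere along $\lfloor n/2\rfloor$ arcs of $T$ incident to $p_0$, and introduce a single separating loop $b$ parallel to the resulting boundary (this is exactly the paper's observation that $b$ and $b'$ coincide). The gap is in what you call the ``canonical triangulation''. Your loop $b$ is constructed from arcs of $T$, so the partition of the labelled punctures into the two sides of $b$ depends on $T$. Because the punctures are labelled and the mapping class group fixes them individually, the double-seashell you reach after applying Theorem~\ref{thm:upperpuncture} on each side is \emph{not} determined up to homeomorphism: two triangulations $T$ and $T'$ will in general be steered to double-seashells with different label partitions, and you have no bound on the distance between those. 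Your sentence ``the homeomorphism type of each piece and of $b$ itself is determined by the ordering data'' is precisely where this fails.

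The paper's proof of Theorem~\ref{thm:upperpuncture} handles this with the merging step (the $6(n-1)$ flips of Figure~\ref{FLIPLEFT}) that turns the double-seashell into a single global seashell ordered by the labels; this step is part of what ``applies verbatim'' and is what you are missing. A second, smaller issue: you use Theorem~\ref{thm:upperpuncture} as a black box on the two pieces rather than inducting. That is logically fine, but then each piece costs the full diameter constant $400$, not the half-constant $200$ that the inductive argument uses for distance to the canonical form, and your claimed bound $410\,n\log n$ does not follow from the displayed sum (for large $n$ that sum is of order $800\,n\log n$ after doubling). If you keep the inductive scheme of the paper, including the merging step, the same bookkeeping goes through with the slightly larger constant to absorb the missing $a_0$-triangle.
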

\begin{theorem}\label{thm:genuszeroupperunlabelled}
If $\Omega$ is a sphere with $n$ unlabelled punctures then 
$$\diam (\MF(\Omega)) < A n $$
where $A$ can be taken equal to $22$. 
\end{theorem}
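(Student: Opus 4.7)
The plan is to carry over the argument from Remark \ref{rem:unmarked} proving Theorem \ref{thm:unmarked} to the closed sphere setting, relying crucially on the fact that the punctures of $\Omega$ are unlabelled and hence all equivalent under the action of $\MOD(\Omega)$. This freedom allows us to select any single puncture to play the role of the distinguished vertex $p$ at which valence-increasing flips are performed, replacing the boundary marked point that served as the base vertex in the disk case.

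Concretely, I would first fix an arbitrary puncture $p$ of $\Omega$ and show that, given any triangulation $T$ of $\Omega$, one can perform a sequence of valence-increasing flips at $p$, each producing a new arc incident to $p$, until $T$ has been transformed into a flower configuration analogous to Figure \ref{fig:flower_0} in which every arc is incident to $p$ outside a small central region. This proceeds exactly as in Remark \ref{rem:unmarked}: whenever an arc of the current triangulation is not incident to $p$, examine a triangle with $p$ as a vertex which is adjacent to a triangle carrying a non-incident arc, and flip the shared edge to create an arc incident to $p$. The number of such flips is bounded linearly in $\kappa = 3n - 6$, by the same accounting used in the disk case.

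Next, once two triangulations $S$ and $T$ have been brought to flower form at $p$, they agree outside a central region which, up to homeomorphism, is a triangulated polygon on a linear-in-$n$ number of vertices. Theorem \ref{th:STT} of Sleator, Tarjan, and W. Thurston then bounds the number of flips required to pass between the two flower configurations by another linear-in-$n$ quantity. Summing the three contributions, namely the flips to bring $S$ to flower form, the flips to bring $T$ to flower form, and the polygon-flip distance provided by the STT bound, yields a total upper bound of the form $An$ on the diameter.

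The main obstacle will be bookkeeping of the constants: one must verify the correct analogue of the $4\kappa - 2n$ estimate from Remark \ref{rem:unmarked} in the absence of a boundary, determine the exact size of the central polygon that arises in the sphere setting (which differs slightly from the disk case since there is no boundary arc to be counted), and combine these with Theorem \ref{th:STT} to land at the explicit value $A = 22$. The underlying geometric and combinatorial content of the argument is essentially identical to the disk case, so no fundamentally new ideas are required beyond the initial choice of base puncture.
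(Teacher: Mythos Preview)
Your proposal is correct and follows essentially the same approach as the paper. The paper's own treatment (Remark \ref{rem:allpunctures}) simply asserts that the valence-increasing-flips argument of Remark \ref{rem:unmarked} ``applies verbatim'' to the punctured sphere, choosing one puncture to play the role of the boundary marked point; your plan spells out exactly this adaptation and correctly identifies the remaining work as a constant computation.
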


\end{remark}

\subsection{Lower bounds via counting arguments}\label{ss:lower}

We now focus on lower bounds. They will essentially follow from a theorem of Sleator, Tarjan and Thurston \cite{STT1} and a counting argument. 

We begin with the following general lemma which follows from a theorem on grammars on graphs \cite{STT1}.

\begin{lemma}\label{lem:graphgrammar}
Let $\Lambda$ be a surface with $n$ punctures and $\MF(\Lambda)$ its modular flip graph.

Then for a fixed triangulation $T_\mu\in \MF(\Lambda)$ we have: 
$$\card \{ T \in \MF(\Lambda) \,| \, d(T,T_\mu) \leq m\} \leq 4^{10 m} 4^{\tilde{\kappa}(\Lambda)}.$$ 

\end{lemma}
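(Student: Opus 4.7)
The plan is to encode each triangulation $T$ with $d(T,T_\mu) \leq m$ as a unique string over a $4$-letter alphabet of length at most $\tilde\kappa(\Lambda) + 10m$, following the grammar approach of Sleator--Tarjan--Thurston \cite{STT1}. Counting the possible strings then immediately yields the bound $4^{\tilde\kappa}\cdot 4^{10m}$. The encoding has two parts: a fixed description of $T_\mu$, and a per-flip description of the shortest path from $T_\mu$ to $T$.

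First, I would produce a canonical encoding of $T_\mu$ as a triangulated surface up to homeomorphism. The dual of a triangulation is a trivalent ribbon graph with $\tilde\kappa$ vertices. Choose a rooted spanning tree of this ribbon graph. Traversing the tree in a fixed order, at each step one records a bounded-size local choice (which incident half-edge to follow next, and how the non-tree half-edges at the current vertex are paired up combinatorially). Standard bookkeeping shows this produces a string of length at most $\tilde\kappa$ over a $4$-letter alphabet which uniquely determines $T_\mu$ up to homeomorphism.

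Second, given a shortest flip path $T_\mu = T_0 \to T_1 \to \cdots \to T_m = T$ (which exists because $d(T,T_\mu)\leq m$), I would append one block per flip to the encoding. Each block consists of a short walk in the current dual graph from a maintained "cursor" position to the arc being flipped, followed by a symbol recording the flip. Because the dual graph has bounded degree, the STT1 grammar can be arranged so that each block uses at most $10$ symbols from the $4$-letter alphabet; this is the content of the grammar theorem in \cite{STT1}. After all $m$ flips, the concatenated string encodes $T$ uniquely.

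Putting the two parts together, every triangulation $T$ in the $m$-ball around $T_\mu$ corresponds to some string of length at most $\tilde\kappa + 10m$ over a $4$-letter alphabet, giving at most $4^{\tilde\kappa+10m}$ such $T$. The main technical point is the constant $10$ in the per-flip cost: one must carefully check that the cursor can always reach the arc to be flipped with a bounded-length local walk, and that the flip itself and any required re-orientation of the encoding can be recorded with a bounded number of additional symbols. This is where the \cite{STT1} grammar does the real work, and the fact that it operates entirely locally on the dual ribbon graph is what allows the argument to go through uniformly for any surface $\Lambda$, not just for polygons or spheres as in the original setting.
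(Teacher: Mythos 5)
Your high-level plan matches the paper's: pass to the dual trivalent ribbon graph, regard each flip as $O(1)$ local grammar productions, and invoke the Sleator--Tarjan--Thurston counting machinery. The paper does exactly this, but treats Theorem~2.3 of \cite{STT1} as a black box: after labelling the three half-edges at each vertex of $G(T)$ by $\{1,2,3\}$ and observing that one flip costs at most five productions (two re-labellings per endpoint plus the flip production itself), the theorem bounds the number of graphs derivable from a fixed $\tilde\kappa$-vertex trivalent graph in at most $10m$ productions by $4^{10m}\,4^{\tilde\kappa}$.

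The gap is in the concrete cursor mechanism you propose for the per-flip cost of $10$ symbols. Recording ``a short walk in the current dual graph from a maintained cursor position to the arc being flipped'' does not give a bounded per-flip cost: consecutive flips along a geodesic in $\F(\Lambda)$ need not be spatially close in the dual graph (two commuting flips on quadrilaterals far apart on the surface are a basic example), so the cursor walk between consecutive flips can have length of order $\tilde\kappa$, and recording an absolute position instead would cost $\Theta(\log\tilde\kappa)$ symbols per flip. In other words, the claim that ``one must carefully check that the cursor can always reach the arc to be flipped with a bounded-length local walk'' would, if checked, be found false. What Theorem~2.3 of \cite{STT1} actually supplies --- and what your sketch defers to but misdescribes --- is precisely the bookkeeping that eliminates this per-step positional overhead; it is a global counting theorem for graph-grammar derivations (with the $4^{\tilde\kappa}$ factor absorbing the addressing), not a cursor-walk encoding. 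A secondary point: since $T_\mu$ is fixed, encoding it as the first $\tilde\kappa$ symbols cannot contribute a factor of $4^{\tilde\kappa}$ to the count of possible strings, as that prefix is constant; the $4^{\tilde\kappa}$ appears because the cited theorem does not assume a fixed start graph. Replacing the cursor argument by a direct citation of STT1's Theorem~2.3 yields the paper's proof.
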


\begin{proof}
This is a direct consequence of Theorem 2.3 of \cite{STT1} and the discussion in Section 5 in \cite{STT1}. For any triangulation $T$ one can construct its dual graph $G(T)$ (see Figure \ref{DualFlip}). 

\begin{figure}[htbp]
\centerline {\epsfig{file =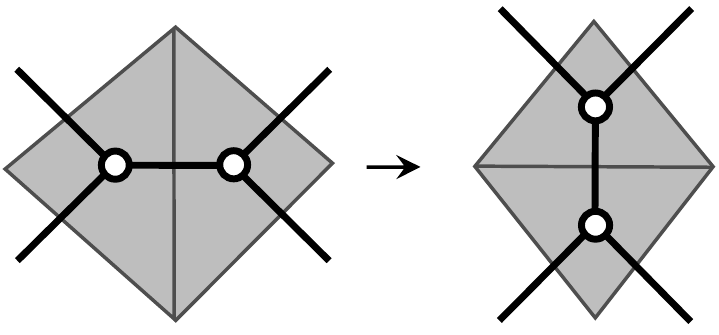,width=5.0cm,angle=0}}
\caption{The graph dual to a triangulation}\label{DualFlip}
\end{figure}

The graph $G(T)$ is a trivalent graph that has exactly $\tilde{\kappa}(\Lambda)$ vertices. We consider the three half-edges incident to a vertex labelled by the integers $1,2,3$ in clockwise order. Changing $T$ by one flip is equivalent to evolve $G(T)$ according the grammar in Figure \ref{flip_grammar}. 

\begin{figure}[h]
\leavevmode \SetLabels
\L(.41*.92) $1$\\
\L(.34*.62) $3$\\
\L(.455*.62) $2$\\
\L(.595*.92) $3$\\
\L(.525*.62) $2$\\
\L(.64*.62) $1$\\
\L(.39*.18) $3$\\
\L(.335*.3) $2$\\
\L(.335*.18) $1$\\
\L(.41*.29) $3$\\
\L(.465*.3) $1$\\
\L(.465*.18) $2$\\
\L(.57*.27) $3$\\
\L(.602*.45) $2$\\
\L(.563*.45) $1$\\
\L(.597*.2) $3$\\
\L(.601*.015) $1$\\
\L(.563*.015) $2$\\
\endSetLabels
\begin{center}
\AffixLabels{\centerline{\epsfig{file =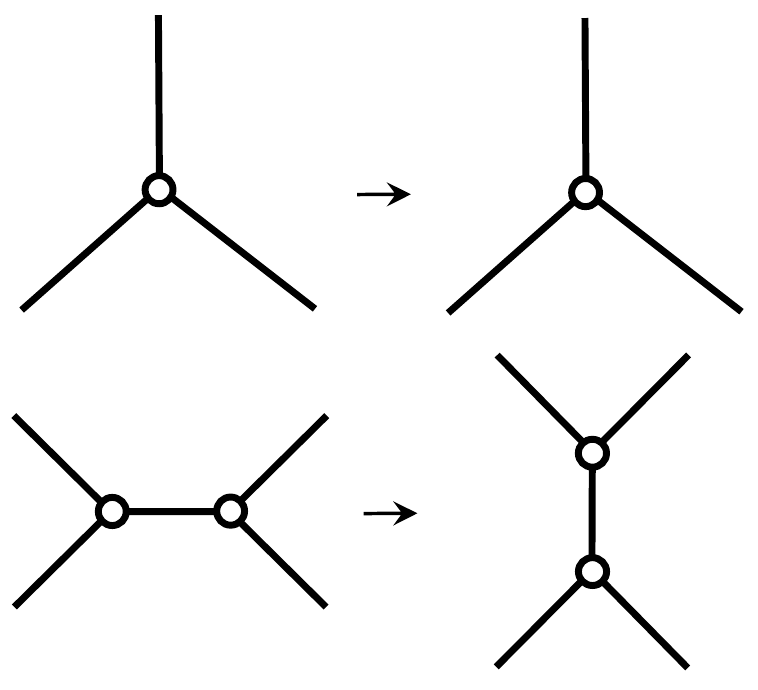,,width=5.0cm,angle=0} }}
\vspace{-30pt}
\end{center}
\caption{The grammar of a flip}\label{flip_grammar}
\end{figure}

This grammar has two productions: one for doing the flip and the other for preparing the half-edges labels to allow the flip. Indeed, one flip on $T$ corresponds to perform at most 5 productions on $G(T)$: two to prepare the half-edge labels on the first vertex, two to prepare the half-edge labels on the second vertex, and one for the flip. It follows that the number of triangulations that can be obtained from $T_\mu$ in at most $m$ flips is bounded above by the number of graphs that can be derived by $G(T_\mu)$ with at most $10m$ productions. The latter is bounded above by $4^{10m}4^{\tilde{\kappa(\Lambda)}}$ by a straightforward application of Theorem 2.3 \cite{STT1} to the grammar we described. The same proof works verbatim for $T_\nu$.
\end{proof}

\begin{remark}
Setting $m = \diam\left(\MF(\Sigma)\right)$ in the lemma above, and then solving for $m$ using Inequality \ref{eq:card}, one obtains the following result:
\begin{corollary}\label{cor:card}
Let $\Sigma$ be a surface of genus $g$ with $n$ marked points, $\Gamma$ be a surface of genus $g$ with one boundary component and exactly one marked point on it, and $\Omega$ be a disk with $n-1$ interior punctures. We have: 
$$ \diam\left(\MF(\Sigma)\right) > \frac{\log (\card(\MF(\Gamma))) + \log (\card(\MF(\Omega))) - \tilde{\kappa}(\Sigma)\log(4)}{10 \log(4)} $$ 

\end{corollary}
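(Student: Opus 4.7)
The plan is to combine Lemma \ref{lem:graphgrammar} applied to $\Sigma$ with Inequality \ref{eq:card}, then take logarithms and solve for the diameter.

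First I would set $m = \diam(\MF(\Sigma))$ and fix an arbitrary base triangulation $T_\mu \in \MF(\Sigma)$. By definition of the diameter, every triangulation of $\MF(\Sigma)$ lies within distance $m$ of $T_\mu$, so the ball of radius $m$ around $T_\mu$ in fact equals $\MF(\Sigma)$. Applying Lemma \ref{lem:graphgrammar} to $\Lambda = \Sigma$ therefore gives the upper bound
$$\card(\MF(\Sigma)) \;\leq\; 4^{10m}\cdot 4^{\tilde{\kappa}(\Sigma)}.$$

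Next, I would invoke the counting inequality from the opening of the section, namely
$$\card(\MF(\Sigma)) \;>\; \card(\MF(\Gamma))\cdot \card(\MF(\Omega)),$$
which follows from the observation that any two triangulations of $\Sigma$ containing a puncture-separating loop must already be distinguished on either $\Gamma$ or $\Omega$ (the puncture-separating loop being unique up to homeomorphism). Chaining the two inequalities yields
$$\card(\MF(\Gamma))\cdot \card(\MF(\Omega)) \;<\; 4^{10m}\cdot 4^{\tilde{\kappa}(\Sigma)}.$$

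Finally, taking logarithms on both sides and isolating $m$ gives exactly
$$m \;>\; \frac{\log(\card(\MF(\Gamma))) + \log(\card(\MF(\Omega))) - \tilde{\kappa}(\Sigma)\log(4)}{10 \log(4)},$$
which is the claimed bound. There is no real obstacle here: the only substantive inputs are the graph-grammar counting bound of Sleator--Tarjan--Thurston (packaged as Lemma \ref{lem:graphgrammar}) and the product lower bound on $\card(\MF(\Sigma))$; the remainder is routine arithmetic. The genuinely useful content of the corollary lies in later plugging in effective lower bounds on $\card(\MF(\Gamma))$ and $\card(\MF(\Omega))$ to extract the $g\log(g{+}1) + n\log(n{+}1)$ growth rate asserted in Theorem \ref{thm:diameters}.
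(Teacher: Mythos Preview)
Your proposal is correct and follows exactly the approach indicated in the paper: set $m=\diam(\MF(\Sigma))$ in Lemma~\ref{lem:graphgrammar}, combine with Inequality~\ref{eq:card}, take logarithms, and solve for $m$. There is nothing to add.
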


\end{remark}

We now count vertices of our combinatorial moduli spaces.

\begin{lemma}\label{lem:counting}
Let $\Gamma$ be a surface of genus $g\geq 2$ with a single boundary loop and one marked point on the boundary. Let $\Omega$ be a disk with a single boundary component with a marked point on the boundary and with
$n-1$ interior labelled points. 
Then
\begin{eqnarray}
 \label{card1} \card \{ \MF(\Gamma)\} &\geq& \frac{g-1}{2} \, (g-1)!\\ 
 \label{card2} \card \{ \MF(\Omega)\} &\geq& C_{n-2}\, (n-1)!
\end{eqnarray}
where $C_{k}$ is the $k$-th Catalan number. \end{lemma}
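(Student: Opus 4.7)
The plan is to exhibit many explicit triangulations in each modular flip graph and then to check that they are pairwise inequivalent under the mapping class group.

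For the bound on $\card \MF(\Omega)$, I would construct a triangulation $T_{\sigma,\tau}$ for each pair $(\sigma,\tau)$, where $\sigma$ is a bijection of $\{1,\ldots,n-1\}$ to itself and $\tau$ is one of the $C_{n-2}$ triangulations of an $n$-gon. Given $\sigma$, the multiarc $A_\sigma$ consisting of the $n$ arcs forming the cycle $p_0 \to p_{\sigma(1)} \to \cdots \to p_{\sigma(n-1)} \to p_0$ is well-defined up to isotopy rel marked points and bounds an embedded $n$-gon $P_\sigma \subset \Omega$ whose boundary carries all the marked points of $\Omega$. I would use $\tau$ to triangulate the interior of $P_\sigma$ by $n-3$ diagonals, and complete on the complement (a topological disk whose boundary consists of $a_0$ and a copy of $\partial P_\sigma$ sharing $p_0$) by a canonical triangulation such as a fan from a distinguished boundary copy of $p_0$. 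This yields exactly $(n-1)!\cdot C_{n-2}$ triangulations, matching the desired lower bound.

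For the bound on $\card \MF(\Gamma)$, I would follow an analogous strategy, using an ordering of handles in place of an ordering of punctures. A multiarc $B_\sigma$ in $\Gamma$ indexed by permutations $\sigma$ of $g-1$ handles (with one handle serving as a basepoint handle) cuts $\Gamma$ into a canonical triangulable piece; combining the $(g-1)!$ such orderings with a further combinatorial choice of size at least $\lceil (g-1)/2 \rceil$ coming from the relative attachment pattern of the remaining paired handles produces at least $\tfrac{g-1}{2}(g-1)!$ triangulations. The explicit construction mirrors the canonical stair-type triangulation used in the proof of Theorem \ref{thm:uppergenus}, with the $g-1$ handles arranged sequentially along an arc based at $p_0$ and the pairing choice coming from whether the terminal handle is nested inside or beside the others.

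The main obstacle in both parts is verifying pairwise inequivalence under the mapping class group. For the $\Omega$-case, the key point is that $p_0 \in \partial \Omega$ is fixed pointwise by every element of $\MOD(\Omega)$, so the linearly ordered sequence of half-edges emanating from $p_0$ into the interior of $\Omega$ is an $\MOD(\Omega)$-invariant. This sequence records the two $n$-gon arcs $p_0 p_{\sigma(1)}$ and $p_{\sigma(n-1)} p_0$ bounding $P_\sigma$, and hence $\sigma$; further examination of the local combinatorics of $T_{\sigma,\tau}$ near $P_\sigma$ recovers $\tau$. More conceptually, the labelled ribbon-graph structure of the triangulation (the $1$-skeleton together with cyclic orderings at each vertex) is a complete invariant of the $\MOD(\Omega)$-orbit of a triangulation of an oriented surface, and this invariant separates the $T_{\sigma,\tau}$'s. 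For the $\Gamma$-case the argument is analogous in spirit but subtler, since $\MOD(\Gamma)$ is considerably richer; in addition to the labelled structure at $p_0$, one can invoke homological invariants such as intersection numbers of the arcs of $T$ with a fixed symplectic basis of $H_1(\Gamma)$ to distinguish the constructed triangulations according to which handle each arc crosses and in what order.
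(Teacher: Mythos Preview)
Your approach for $\Omega$ is close in spirit to the paper's but uses a different construction. The paper does not thread a cycle through the punctures; instead it surrounds each interior puncture $p_i$ by a loop based at $p_0$, so that cutting out the $n-1$ resulting monogons leaves an honest $n$-gon with distinguished side $a_0$. The $C_{n-2}$ comes from triangulating that polygon and the $(n-1)!$ from assigning the labelled punctures to the loops. This makes the distinguishing step essentially trivial, whereas in your version you have to work harder (and your claim that the cycle $A_\sigma$ is ``well-defined up to isotopy rel marked points'' is false as stated: there are infinitely many isotopy classes of such cycles in $\F(\Omega)$, and you would need to argue that the ambiguity disappears after quotienting by $\MOD(\Omega)$).

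For $\Gamma$ there is a genuine gap. The handles of $\Gamma$ carry no labels, so ``permutations of $g-1$ handles'' is not a priori meaningful data in $\MF(\Gamma)$: the mapping class group can rearrange handles. Your proposed distinguishing invariant --- intersection numbers of arcs of $T$ with a \emph{fixed} symplectic basis of $H_1(\Gamma)$ --- is not a $\MOD(\Gamma)$-invariant at all, because $\MOD(\Gamma)$ acts on $H_1(\Gamma;\Z)$ through the full symplectic group $\mathrm{Sp}(2g,\Z)$ and will move that basis. So as written you have no mechanism to separate your $(g-1)!$ candidate triangulations into distinct $\MOD(\Gamma)$-orbits. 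The paper sidesteps this problem entirely: it collapses the boundary triangle of any $T\in\MF(\Gamma)$ to obtain a triangulation of the closed genus-$g$ surface with one marked point, observes that this map is injective on $\MF(\Gamma)$, and then quotes Penner's lower bound $\tfrac{g-1}{2}(g-1)!$ for the number of such one-vertex triangulations up to homeomorphism.
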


\begin{proof}
We begin with Inequality \ref{card1}. For a given triangulation $T \in \MF(\Gamma)$, if we collapse the triangle which contains the boundary arc by cutting the triangle and pasting the two loose arcs together, we obtain a triangulated surface of genus $g$ with a single marked point. If you perform this on two triangulations $S,T \in \MF(\Gamma)$ and obtain different triangulations up to homeomorphism, then the triangulations we necessarily different to begin with. As such, there are at least as many triangulations in $\MF(\Gamma)$ then triangulations of a genus $g$ surface with a single marked point. It is a result of Penner \cite{PenWP} that there are at least
$$
\frac{g-1}{2} (g-1)!
$$
such triangulations and so the inequality is proved.

For Inequality \ref{card2} we argue as follows. Denote by $p_0$ the marked point on the boundary curve and $a_0$ the boundary loop. We begin by considering only triangulations where each interior puncture is surrounded by a single loop based at $p_0$ (see Figure \ref{fig:flower_0}). 


For two triangulations to be the same, they must coincide on the exterior of these loops. Cutting along the loops, one obtains an $n$-gon with one privileged side $a_0$. As such, we are in the classical case of counting triangulations of a polygon with an order on the sides and there are $C_{n-2}$ such triangulations. Any permutation of the vertex labelling gives a different polygon and thus we obtain the stronger lower bound
$$
(n-1)! \, C_{n-2}.
$$
\end{proof}

From this we obtain the following lower bound. 

\begin{corollary}\label{cor:count}
Let $\Sigma$ be a surface with $n$ labelled punctures. We have
$$\diam\left(\MF(\Sigma)\right)> B \left (n \log(n+1) + g \log(g+1) \right), $$ 
where $B$ can be taken equal to $2 \cdot 10^{-5}$.
\end{corollary}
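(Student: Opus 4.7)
The proof plan is to plug the cardinality bounds of Lemma \ref{lem:counting} into the inequality of Corollary \ref{cor:card} and estimate using Stirling's formula. From Corollary \ref{cor:card} we have
$$\diam\left(\MF(\Sigma)\right) > \frac{\log(\card(\MF(\Gamma))) + \log(\card(\MF(\Omega))) - \tilde{\kappa}(\Sigma)\log 4}{10\log 4},$$
and since $\Sigma$ has no boundary, $\tilde{\kappa}(\Sigma) = 4g + 2n - 6$. The entire task thus reduces to bounding the numerator from below by a constant multiple of $n\log(n+1) + g\log(g+1)$.

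First I would substitute in $\card(\MF(\Gamma)) \geq \tfrac{g-1}{2}(g-1)!$ and $\card(\MF(\Omega)) \geq C_{n-2}(n-1)!$ from Lemma \ref{lem:counting}. Then I would apply the elementary Stirling lower bound $\log(k!) \geq k\log k - k$ (valid for $k\geq 1$) together with the standard estimate $C_k \geq 4^k/((k+1)(2k+1))$ coming from $\binom{2k}{k}\geq 4^k/(2k+1)$. This yields
$$\log(\card(\MF(\Gamma))) \geq g\log g - O(g), \qquad \log(\card(\MF(\Omega))) \geq n\log n + (n-2)\log 4 - O(\log n).$$
The subtractive term $\tilde{\kappa}(\Sigma)\log 4$ is linear in $g$ and $n$, hence is dominated by the $g\log g$ and $n\log n$ contributions once $g,n$ exceed fixed thresholds. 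Pushing the constants through then gives $\text{numerator} \geq C(n\log(n+1)+g\log(g+1))$ with a concrete $C$, and dividing by $10\log 4 \approx 13.86$ produces a $B$ comfortably larger than $2\cdot 10^{-5}$.

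The main obstacle is not conceptual but purely accounting: one has to track the Stirling, Catalan, and $\tilde{\kappa}\log 4$ constants together and check that a single $B$ works uniformly across the whole range of $(g,n)$. In particular, the low-complexity cases $g\leq 1$ or $n\leq 2$ — where either Lemma \ref{lem:counting} is weak or the construction of $\Gamma$ and $\Omega$ leading to Corollary \ref{cor:card} degenerates — have to be handled separately, either by noting that $B(n\log(n+1)+g\log(g+1))$ is bounded on any such finite regime and can be absorbed by shrinking $B$, or by invoking the complementary situations of Theorems \ref{thm:onepunctureupper} and \ref{thm:genuszeroupper} (which correspond to the pure-genus and pure-puncture cases). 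The very small value $B = 2\cdot 10^{-5}$ is chosen to leave ample slack for these boundary regimes.
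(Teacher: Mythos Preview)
Your proposal is correct and follows essentially the same route as the paper: plug the cardinality bounds of Lemma~\ref{lem:counting} into Corollary~\ref{cor:card}, use Stirling's inequality $\log k!\geq k\log k-k$, absorb the linear $\tilde{\kappa}(\Sigma)\log 4$ term, and dispose of the small-$(g,n)$ regime by shrinking $B$. The only cosmetic difference is that the paper simply drops the Catalan factor (using $\log(\card(\MF(\Omega)))>\log(n-1)!$) rather than your sharper $C_k\geq 4^k/((k+1)(2k+1))$, and records explicit thresholds $g\geq 705$, $n\geq 50$ beyond which the asymptotic estimate kicks in.
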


\begin{proof}
We will use the following inequalities: 
\begin{enumerate}
\item $\log( C_n) > n $;
\item $\log n! > n \log(n) - n $. 
\end{enumerate}
Assume $n\geq 3$ and $g \geq 3$. From Lemma \ref{lem:counting} we get: 
\begin{align}
\log (\card(\MF(\Gamma))) &> \log (g-1)! > (g-1) \log(g-1) - g \\ 
\log(\card(\MF(\Omega))) &> \log(n-1)! > (n-1) \log(n-1) -n 
\end{align}
Assume that the punctures of $\Sigma$ are labelled. Plugging in the inequality in Corollary \ref{cor:card} we have:
\begin{align*} 
 \diam\left(\MF(\Sigma)\right) & > \frac{(g-1) \log(g-1) -g + (n-1) \log(n-1) - n - \tilde{\kappa}(\Sigma) \log(4)}{10 \log(4)} \\ 
& > \frac{(g-1) \log(g-1) - g + (n-1) \log(n-1) - n - (4g + 2n - 6) \log(4) }{10 \log(4)} \\
& > \frac{(g-1)\log(g-1) - (4 \log(4) +1)g}{10\log(4)} + \frac{(n-1) \log(n-1) - (2\log(4) +1)n}{10\log(4)} \\ 
& > B\, (g \log(g+1) + n\log(n+1)) 
\end{align*}
where $B$ can be taken to be $2 \cdot 10^{-5}$ and for $g \geq 705$ and $n \geq 50$. 
It is immediate to verify that 
$$
\diam\left(\MF(\Sigma)\right) > B\, (g \log(g+1) + n\log(n+1)) 
$$
also holds in the remaining cases ($g\leq 704$ or $n\leq 49$). 
\end{proof}

We note that we can improve the constant $B$ by conditioning $g$ and $n$ (giving them both lower bounds) but our principle interest is in the order of growth.

We obtain a similar result on lower bounds for unlabeled marked points. 
\begin{corollary}
If $\Sigma$ has $n \geq 511$ unlabelled punctures and is of genus $g$ then
$$\diam\left(\MF(\Sigma)\right) > B (g \log(g+1) + n )$$
where $B$ can be taken to be $10^{-3}$ . 
\end{corollary}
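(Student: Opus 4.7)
The argument mirrors the proof of the previous (labelled) corollary, which already contains all the essential machinery: Corollary \ref{cor:card} reduces a lower bound on $\diam(\MF(\Sigma))$ to lower bounds on $\card(\MF(\Gamma))$ and $\card(\MF(\Omega))$, and Lemma \ref{lem:counting} supplies the latter. The only step that must be changed for unlabelled punctures is the cardinality estimate on the puncture factor $\MF(\Omega)$. The bound $\card(\MF(\Gamma)) \geq \tfrac{g-1}{2}(g-1)!$ is unaffected since $\Gamma$ carries no punctures, only a single boundary marked point. On the other hand, the $(n-1)!$ factor in the labelled bound $\card(\MF(\Omega)) \geq C_{n-2}(n-1)!$ encodes the freedom to relabel the interior punctures of $\Omega$, and this freedom is now removed by homeomorphisms; hence in the unlabelled setting only the Catalan factor survives, giving $\card(\MF(\Omega)) \geq C_{n-2}$ and so $\log\card(\MF(\Omega)) \geq (n-2)\log 4 - O(\log n)$.

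Plugging these into Corollary \ref{cor:card} with $\tilde\kappa(\Sigma) = 4g + 2n - 6$, one obtains (after splitting the right-hand side into genus and puncture pieces, exactly as in the labelled proof)
$$
\diam(\MF(\Sigma)) > \frac{(g-1)\log(g-1) - (1+4\log 4)g}{10\log 4} + \frac{(n-2)\log 4 - (2n-6)\log 4}{10\log 4}.
$$
The genus piece is the same expression that already yielded the $g\log(g+1)$ contribution in the labelled case, so it produces the term $B g\log(g+1)$ for $g$ above an explicit threshold. The task is then to deduce the remaining term $Bn$. Here one uses the hypothesis $n \geq 511$ together with the smallness of $B = 10^{-3}$: one checks that for $g$ above the threshold the surplus in the genus piece is already more than enough to dominate the linear-in-$n$ residue coming from the puncture piece combined with the requirement $\diam > B n$. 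Finally, the (finitely many) cases with $g$ below the threshold are verified directly, exactly as the labelled corollary concludes with an explicit check of the range ``$g \leq 704$ or $n \leq 49$''.

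\textbf{Main obstacle.} The loss of the $(n-1)!$ factor is substantial: instead of a super-linear $(n-1)\log(n-1)$ contribution from the puncture piece, we are left only with a linear $(n-2)\log 4$ contribution, which is essentially cancelled by the $(2n-6)\log 4$ subtraction coming from $\tilde\kappa(\Sigma)\log 4$ in Corollary \ref{cor:card}. Consequently the puncture piece is borderline (of linear size and of small or even negative sign), and one must absorb the required $Bn$ contribution into the positive genus piece by choosing $B$ small and $n$ past a threshold in concert. The resulting bookkeeping, together with the direct verification in the small-genus regime, is the delicate part of the proof.
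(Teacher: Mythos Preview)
There is a genuine gap. Your claim that ``the surplus in the genus piece is already more than enough to dominate the linear-in-$n$ residue'' cannot hold: the genus piece $\frac{(g-1)\log(g-1)-(1+4\log 4)g}{10\log 4}$ depends only on $g$, whereas the residue you must absorb is of order $n$. Concretely, $\log C_{n-2}\sim (n-2)\log 4$, while the puncture part of $\tilde\kappa(\Sigma)\log 4$ is $(2n-6)\log 4$, so plugging into Corollary~\ref{cor:card} your puncture piece is
$$
\frac{(n-2)\log 4-(2n-6)\log 4}{10\log 4}\;\sim\;-\frac{n}{10},
$$
a \emph{negative} term linear in $n$. For fixed $g$ and $n\to\infty$ the right-hand side of your displayed inequality tends to $-\infty$, and no bookkeeping or ``direct verification for small $g$'' can repair this. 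The Catalan count is simply too weak against the base-$4$ grammar of Lemma~\ref{lem:graphgrammar}: you would need a count exceeding $4^{2n}=16^n$, which is false.

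The paper's proof differs from the labelled case in two essential (and new) ingredients, not just in dropping the $(n-1)!$ factor. First, it invokes a \emph{refined} graph grammar from \cite{STT1} yielding $\card(\MF(\Sigma))\leq 3^{\tilde\kappa(\Sigma)}\,8^m$, so the relevant base on the $\tilde\kappa$ side drops from $4$ to $3$. Second, it replaces the Catalan estimate by Brown's exact count for rooted triangulations of the disk with $n-1$ unlabelled interior points, $\card(\MF(\tilde\Omega))>\frac{2(4n-7)!}{(n-1)!(3n-4)!}$, whose exponential growth rate is $256/27\approx 9.48$. The threshold $n\geq 511$ is exactly what is needed so that this count exceeds $(9.1)^n>3^{2n}$, and then the puncture piece becomes $\frac{n\log(9.1)-2n\log 3}{\log 8}>0$, a genuinely positive linear contribution. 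Both ingredients are necessary: Brown alone against base $4$ still fails (since $9.48<16$), and the refined grammar alone against the Catalan count still fails (since $4<9$).
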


\begin{proof}
The graph grammar described in Lemma \ref{lem:graphgrammar} can be refined (see \cite{STT1} for details) so that
$$ \card{\MF(\Sigma)} \leq 3^{\tilde{\kappa}(\Sigma)} 8^m .$$ 
We have: 
$$ m \geq \frac{\log( \card(\MF(\Sigma)) - \tilde{\kappa}(\Sigma) \log(3)}{\log(8)}.$$
Let $\tilde{\Omega}$ be a disk with a single boundary component with a marked point on the boundary and with $n-1$ interior unlabelled points. As in Lemma \ref{lem:graphgrammar} we have
$$ \card{\MF(\Sigma)} \geq \card{\MF(\Gamma)} \, \card{\MF(\tilde{\Omega})} .$$ 
Now we use a result of Brown \cite{Brown} that provides lower bounds on the cardinality of $\MF(\tilde{\Omega})$:
$$ \card{\MF(\tilde{\Omega})} > \frac{2(4n - 7)!}{(n-1)! (3n - 4)!} .$$
An explicit computation shows that, for $n \geq 511$, the following holds: 
$$ \card{\MF(\tilde{\Omega})} > (9.1)^{n} > 3^{2n} .$$ 
From this we can conclude that
\begin{align*}
\diam\left(\MF(\Sigma)\right) & > \frac{(g-1) \log(g-1) -g - \tilde{\kappa}(\Sigma) \log(3)}{\log(8)} \\ 
& > \frac{(g-1) \log(g-1) - g + \log(9.1) n - (4g + 2n - 6) \log(3) }{\log(8)} \\
& > \frac{(g-1)\log(g-1) - (4 \log(3) +1)g}{\log(8)} + \frac{\log(9.1) n - \log(9)n }{\log(8)} \\ 
& > B (g \log(g+1) + n) 
\end{align*}
where the latter inequality holds for $g \geq 705$ and $B$ can be taken to be equal to $10^{-3}$. The final assertion can be checked directly for the cases $g < 705$. 
\end{proof}
As before, we note that by putting lower bounds on $g$ and $n$, the constant $B$ can be improved. 
\addcontentsline{toc}{section}{References}
\bibliographystyle{Hugo}
\bibliography{flip_references}


{\em Addresses:}\\
Department of Mathematics, University of Fribourg, Switzerland \\
Indiana University, Bloomington IN, USA \\
{\em Emails:} \href{mailto:hugo.parlier@unifr.ch}{hugo.parlier@unifr.ch}, \href{mailto:valentina}{vdisarlo@indiana.edu}\\

\end{document}